\definecolor{ForestGreen}{rgb}{0.13, 0.55, 0.13}
\theoremstyle{plain} 
\newtheorem{theorem}{Theorem}[section] 
\newtheorem*{theorem*}{Main Result}
\newtheorem{thm*}{Known result}
\newtheorem{corollary}[theorem]{Corollary} 
\newtheorem{lemma}[theorem]{Lemma}
\newtheorem{proposition}[theorem]{Proposition}
\newtheorem{definition}[theorem]{Definition}
\theoremstyle{definition}
\theoremstyle{remark}
\newtheorem{remark}[theorem]{Remark}
\numberwithin{equation}{section}
\newcommand{\eqlab}[1]{\begin{equation}  \begin{aligned}#1 \end{aligned}\end{equation}} 
\newcommand{\bgs}[1]{\begin{equation*} \begin{aligned}#1\end{aligned}\end{equation*}} 
\newcommand{\sys}[2][]{\begin{equation*}#1  \left\{\begin{aligned}#2\end{aligned}\right.\end{equation*}}
\newcommand{\R}{\ensuremath{\mathbb{R}}}
\newcommand{\Rn}{\ensuremath{\mathbb{R}^n}}
\newcommand{\N}{\ensuremath{\mathbb{N}}}
\newcommand{\eps}{\ensuremath{\varepsilon}}
\newcommand{\Co}{\mathcal C}
\newcommand{\E}{\mathcal E}
\newcommand{\G}{\mathcal G}
\newcommand{\W}{\mathcal W}
\DeclareMathOperator{\Per}{Per}
\DeclareMathOperator{\loc}{loc}
\DeclareMathOperator{\diam}{diam}
\DeclareMathOperator{\dist}{dist}
\DeclareMathOperator{\Tail}{Tail}
\renewcommand{\le}{\leqslant}
\renewcommand{\leq}{\leqslant}
\renewcommand{\ge}{\geqslant}
\renewcommand{\geq}{\geqslant}
\renewcommand{\emptyset}{\varnothing}
\title[Minimisers of a fractional seminorm and nonlocal  minimal surfaces]{Minimisers of a fractional seminorm \\ and nonlocal  minimal surfaces}
\thanks{
{\em Claudia Bucur}: Istituto Nazionale di Alta Matematica,
Piazzale Aldo Moro 5,
00185 Rome, Italy, and
Dipartimento di Matematica, Universit\`a di Milano,
Via Saldini 50, 20133 Milan, Italy. {\tt claudia.bucur@aol.com}
\\
{\em Serena Dipierro}:
Department of Mathematics
and Statistics,
University of Western Australia,
35 Stirling Hwy, Crawley WA 6009, Australia.
{\tt serena.dipierro@uwa.edu.au}\\
{\em Luca Lombardini}: Department of Mathematics
and Statistics,
University of Western Australia,
35 Stirling Hwy, Crawley WA 6009, Australia.
{\tt luca.lombardini@uwa.edu.au}\\
{\em Enrico Valdinoci}:
Department of Mathematics
and Statistics,
University of Western Australia,
35 Stirling Hwy, Crawley WA 6009, Australia. {\tt enrico.valdinoci@uwa.edu.au}\\
The authors are members of INdAM and AustMS.
The first author is supported by the INdAM Starting Grant
``PDEs, free boundaries, nonlocal equations and applications''.
The second and fourth authors
are supported by the Australian Research Council
Discovery Project DP170104880 NEW ``Nonlocal Equations at Work''.
The second author is supported by
the Australian Research Council DECRA DE180100957
``PDEs, free boundaries and applications''. Part of this work was carried
out during a very pleasant and fruitful visit of the first author to the
University of Western Australia, which we thank for the warm hospitality.}
\author[C. Bucur]{Claudia Bucur}
\author[S. Dipierro]{Serena Dipierro}
\author[L. Lombardini]{Luca Lombardini}
\author[E. Valdinoci]{Enrico Valdinoci}
\begin{document}

\begin{abstract}
The recent literature has intensively studied two classes of
nonlocal variational problems, namely the ones related to the minimisation
of energy functionals that act on functions in suitable Sobolev-Gagliardo
spaces, and the ones related to the minimisation
of fractional perimeters that act on measurable sets
of the Euclidean space.

In this article, we relate these two types of variational problems.
Specifically,
we investigate the connection between the nonlocal minimal surfaces
and the minimisers of the $W^{s,1}$-seminorm.

In particular, we show that a function is a
minimiser for the fractional seminorm if and only if its level sets
are minimisers for the fractional perimeter, and
that the characteristic function of a nonlocal minimal surface
is a minimiser for the fractional seminorm; we also provide an
existence result for minimisers of the fractional seminorm,
an explicit non-uniqueness example
for nonlocal minimal surfaces, and
a Yin-Yang result describing the full and void patterns of nonlocal minimal surfaces.
\end{abstract}
\maketitle

\tableofcontents

\begin{quote}
{\tt\em {``Yang stands for destruction; Yin stands for conservation. Yang brings about disintegration;
Yin gives shape to things.''} \tt
The Yellow Emperor's Classic of Medicine.}\end{quote}\bigskip\bigskip

\section{Introduction}

In the recent literature, a number of variational problems
related to nonlocal functionals of fractional type have been intensively
studied.
Some of these problems deal with {\em functions} minimising
suitable fractional seminorms of Sobolev-Gagliardo-Slobodecki\u{\i} type,
see e.g.~\cite{MR2498561, MR2707618, MR3035063, MR3165278, MR3503212, MR3542614, MR3630640}.
Other problems focus on {\em sets} minimising a fractional notion
of perimeter. One of the main goals of this paper is
to connect the minimisation notion for functions to that for sets,
see e.g.~\cite{nms, MR3322379, MR3412379}.
\medskip

More specifically, in this article we investigate the relation between
the minimisers of the fractional perimeter and those of a $W^{s,1}$-seminorm.
A general notion of minimisers will be introduced to conveniently highlight the connections
between these two problems and establish a suitable existence theory.\medskip

The main results obtained can be grouped into five classes:
\begin{enumerate}
\item {\em Equivalence results:} we will show that a function is a
minimiser for the fractional seminorm if and only if its level sets
are minimisers for the fractional perimeter;
\item {\em Minimising properties of nonlocal minimal surfaces:}
we will show that the characteristic function of a nonlocal minimal surface
is a minimiser for the fractional seminorm
(among all possible competitors, not only among characteristic functions);
\item {\em Existence results:} we will utilise the level sets method to obtain
existence of minimisers for the fractional seminorm;
\item {\em Non-uniqueness results:} we will provide an explicit example
of external datum which leads to at least two nonlocal minimal surfaces
(thus showing  that the level sets method has necessarily
to take into account this ``pathological''
 possibility of 
lack of uniqueness, and cannot be further simplified);
\item {\em Yin-Yang results:} we will show that if the external datum
is void (respectively, full)
in the vicinity of a given domain, then the corresponding nonlocal
minimal surface is necessarily void (respectively, full) inside the domain --
even if the external datum is completely full (respectively, void) at infinity.
\end{enumerate}

Some of these results can also be seen as a nonlocal counterpart of
the classical works in~\cite{bomby, SWZ}, where the minimisers of the perimeter functional have
been related to the minimisers of the $W^{1,1}$-seminorm, also in view of mean curvature type
equations. As we will see in the course of the proofs, dealing with
the nonlocal interactions requires in our setting different approaches than in the classical case.\medskip

Some of these results rely on a suitable variation of a well established
fractional co-area formula
and an auxiliary fractional Hardy inequality
with optimal exponents, but several technical difficulties have also
to be taken into account, especially to deal with possibly unbounded
external data, and to exchange different concepts of minimisations
(e.g., among sets and among functions) which, in principle,
are not clearly related to each other.

Moreover, an interesting feature of the set of problems
addressed in this paper is that there is a strong interplay
between the functional analysis aspects of the setting
and the geometric one: for instance, the Yin-Yang result,
which is intrinsically geometric in spirit, plays an important role
in establishing other merely analytical results.

Let us also stress that geometric results such as the Yin-Yang
one that we propose fit in the research trend of describing
qualitatively the nonlocal minimal surfaces,
with the aim of understanding similarities and differences
with respect to their classical counterparts: in this sense,
these results are not just nice and folkloristic remarks, but,
given the inaccessibility offered so frequently by nonlocal minimal surfaces
to classical analytic methods, they often turn out to be one of the few
tools to deeply understand these new and complicated situations
and on many occasions turn out to be a cornerstone for
a solid development of our knowledge on this subject.
\medskip

The precise formulation of our problems and the specific results obtained
are outlined in the forthcoming subsections.

\subsection{A general minimisation problem related to the fractional $1$-Laplacian,
and its relation to the fractional perimeter functional}

We now introduce a minimisation problem related to the ``fractional $1$-Laplacian''.
The rough idea behind our formal definition is the following.
For $p\ge1$ and~$s\in(0,1)$, the
``fractional $p$-Laplacian'' is the operator obtained
from the minimisation of the $W^{s,p}$-seminorm,
see e.g.~\cite{MR3148135, MR3307955, MR3542614, MR3861716, MR3886598}.
A natural condition for such a minimisation consists in prescribing
the functions outside a given domain~$\Omega$, and thus
the energy to be minimised consists in the terms
of the $W^{s,p}(\R^n)$-seminorm which have a
nontrivial interaction
with~$\Omega$: that is, one can define
\[ \Co \Omega:=\R^n\setminus\Omega\qquad{\mbox{and}}\qquad
 	Q(\Omega):=\R^{2n}\setminus (\Co \Omega)^2,
	\]
and consider the minimisation problem of the energy functional
\begin{equation}\label{GAG:0}
\frac12 \iint_{Q(\Omega)} \frac{|u(x) -u(y)|^p }{|x-y|^{n+sp}}
		 dx\, dy.\end{equation}
We will focus specifically on the case~$p=1$, thus reducing~\eqref{GAG:0}
to the energy functional
\begin{equation}\label{GAG}
		\G (u, \Omega) := \frac12\iint_{Q(\Omega)} 
	\frac{|u(x) -u(y)|}{|x-y|^{n+s}}dx\, dy.
\end{equation}
Interestingly, while on the one hand, it is natural to consider the case~$p=1$, given also its relation to the 
BV-seminorm in the limit as~$s\nearrow1$
(see~\cite{BBM, davila}), on the other hand, it is a
challenging case to take into account, due to the loss of convexity of the nonlocal operator.

Furthermore, in order to allow more general external data,
 rather than the functional in~\eqref{GAG} we will consider
its oscillation with respect to a given
function.

The precise mathematical details of this formulation go as follows.
Throughout the entire paper,
we consider $\Omega \subset \Rn$  to be a  bounded, open set with Lipschitz boundary (unless otherwise specified), and $s\in (0,1)$ to be a fixed number,
and we define the functional space
 \[
	 \W^{s}(\Omega):= \big\{ u\colon \Rn \to \R \mbox{ measurable}\; | \; u \in W^{s,1}(\Omega) \big\}.
 \]
We point out that~$\W^{s}(\Omega)$ is the space of the functions
{\em defined in the whole of~$\Rn$} whose restriction to~$\Omega$
belongs to~$W^{s,1}(\Omega)$. We recall that a function $u:\Omega\to\R$ belongs to the fractional Sobolev space~$W^{s,1}(\Omega)$ if $u\in L^1(\Omega)$ and
\[
[u]_{W^{s,1}(\Omega)}:=\int_\Omega\int_\Omega \frac{|u(x)-u(y)|}{|x-y|^{n+s}}\,dx\,dy<+\infty.
\]
The space~$W^{s,1}(\Omega)$ is a Banach space with respect to the norm
\[
\|u\|_{W^{s,1}(\Omega)}:=\|u\|_{L^1(\Omega)}+[u]_{W^{s,1}(\Omega)}.
\]
\medskip

In this setting, we introduce a general notion of minimisers.

\begin{definition}\label{defn11}
	We say that $u\in \W^s(\Omega)$ is an $s$-minimal function in $\Omega$ if
	\begin{equation}\label{GH2}
	\iint_{Q(\Omega)} \big(|u(x) -u(y)|-|v(x)  - v(y)|\big) 
	\frac{dx\, dy}{|x-y|^{n+s}} \leq 0,
	\end{equation}
	for any competitor $v\in \W^s(\Omega)$ such that $u=v$ almost everywhere in $\Co \Omega$.
	
\end{definition}

We observe that the setting in Definition~\ref{defn11}
is well posed, thanks to the fractional Hardy inequality in~\cite{teolu}. 
Precisely, for any $u\in W^{s,1}(\Omega)$, 
it holds that
 	\[
		\int_{\Omega} \frac{ |u(x)|}{\big(\!\dist(x,\partial\Omega) \big)^{s}}\, dx \leq C \|u\|_{W^{s,1}(\Omega)},
	\] 
	for some $C=C(n,s,\Omega)>0$,
 which implies that 
	\eqlab{\label{frch}
	\int_{\Omega} dx \left(\int_{\Co \Omega} \frac{|u(x)|}{|x-y|^{n+s}} \, dy\right) \leq  C \|u\|_{W^{s,1}(\Omega)},
	}
	 see~\cite[Theorem D.1.4, Corollary D.1.5]{tesilu}.
Then, given $u,v \in \W^s(\Omega)$ such that $u=v$ almost everywhere in $\Co \Omega$, it follows that
\begin{equation}\label{GHE}
 	\iint_{Q(\Omega)} \big||u(x) -u(y)|-|v(x)  - v(y)|\big| \frac{dx\, dy}{|x-y|^{n+s}} \leq C \| u-v\|_{W^{s,1}(\Omega)}.	
\end{equation}
	In particular,
the left hand side of~\eqref{GH2} is finite, in light of~\eqref{GHE}.\medskip

We remark that the minimisation setting in Definition~\ref{defn11}
comprises the energy minimisation of~\eqref{GAG} as a particular case,
and the results that we present here
are actually new even in the
simpler mathematical formulation given by the minimisers of~\eqref{GAG}
(a more precise comparison between
the setting in Definition~\ref{defn11} and the minimisers of~\eqref{GAG}
will be presented in Lemma~\ref{TGAet}).
In any case, we think that the setting in Definition~\ref{defn11}
is more convenient, since it does not need to require the finiteness
of the full energy contributions, thus allowing more general external data
(even though, from the technical point of view,
the simpler formulation in~\eqref{GAG} 
better suits the direct methods of the calculus of variations, as we
will highlight in Theorem~\ref{APP:TH:E}).\medskip

One of the main objectives of this paper is to relate
the minimisation setting of fractional $1$-Laplace type functionals,
as introduced in Definition~\ref{defn11}, to the minimisers of the fractional
perimeter functional, as introduced in~\cite{nms}.\medskip

To this end, in the light of~\cite{nms},
we recall that, given~$s\in(0,1)$, the $s$-fractional perimeter of a measurable set $E \subset \Rn$ in an open set $\Omega\subset\R^n$ is defined as
    \begin{equation}\label{PER}
     \Per_s(E,\Omega): = \frac12\iint_{Q(\Omega)} \frac{|\chi_{E}(x) -\chi_E(y)|}{|x-y|^{n+s}} \, dx \, dy.
     \end{equation}

As customary, one considers ``nonlocal minimal surfaces'',
i.e. minimisers of this fractional perimeter, according to the following setting:         

    \begin{definition}\label{PERMI}
We say that $E\subset \Rn$ is an $s$-minimal set in $\Omega$ if 
	$\Per_s(E,\Omega)<+\infty$ 
		and 
	\[ 
		\Per_s(E,\Omega) \leq \Per_s(F, \Omega) \quad \mbox{ for any } \quad F \subset \Rn \quad \mbox{ such that } \quad F\setminus \Omega=E \setminus \Omega .
		\]
		Given a set $E_0\subset\R^n$, we say that $E\subset \Rn$ is an $s$-minimal set in $\Omega$ with respect to $E_0$ if $E$ is $s$-minimal in $\Omega$ and $E\setminus\Omega=E_0\setminus\Omega$.
\end{definition}

On the one hand, we notice that the minimisation properties in Definitions~\ref{defn11}
and~\ref{PERMI} are, in principle, structurally different,
since Definition~\ref{defn11} deals with the minimisation 
among {\em functions},
while Definition~\ref{PERMI} deals with the minimisation 
among {\em sets}
(in particular, the two definitions cannot be mutually confused, since they
refer to different objects).

On the other hand, one of the main results of this paper consists in a suitable ``equivalence
between the minimisation properties in Definitions~\ref{defn11}
and~\ref{PERMI}''. Namely, a function~$u\in \W^s(\Omega)$ is $s$-minimal
according to Definition~\ref{defn11} if and only if its level sets are $s$-minimal
according to Definition~\ref{PERMI}. In this sense,
the notion of $s$-minimal function generalises the one
of $s$-minimal set. The precise result goes as follows.

\begin{theorem}\label{STER}
If $u\in \W^s(\Omega)$ is an $s$-minimal function in~$\Omega$, then,
for all $\lambda \in \R$,
the set~$\{  u \geq \lambda \}$ is $s$-minimal in $\Omega$.

Viceversa,
if~$u\in \W^s(\Omega)$ and~$\{ u \geq \lambda \}$ is
an $s$-minimal set in $\Omega$ for almost every $\lambda \in \R$,
then $u$ is an $s$-minimal function in~$\Omega$.
\end{theorem}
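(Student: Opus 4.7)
The plan hinges on a fractional co-area identity: for any $u,v\in\W^s(\Omega)$ with $u=v$ a.e.\ in $\Co\Omega$,
\[
\iint_{Q(\Omega)} \frac{|u(x)-u(y)|-|v(x)-v(y)|}{|x-y|^{n+s}}\,dx\,dy
\;=\; 2\int_{\R} \bigl( \Per_s(\{u\ge\lambda\},\Omega) - \Per_s(\{v\ge\lambda\},\Omega) \bigr)\,d\lambda .
\]
I would derive this from the pointwise layer-cake identity $|a-b|=\int_{\R}\bigl|\chi_{\{a\ge\lambda\}}-\chi_{\{b\ge\lambda\}}\bigr|\,d\lambda$, applied at the pairs $(u(x),u(y))$ and $(v(x),v(y))$, subtracted, and integrated against the singular kernel via Tonelli--Fubini. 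The absolute integrability required to exchange the two integrations comes from~\eqref{GHE}: the $\Omega\times\Co\Omega$ interaction is controlled by the fractional Hardy inequality~\eqref{frch}, while the $\Omega\times\Omega$ piece is bounded by $[u]_{W^{s,1}(\Omega)}+[v]_{W^{s,1}(\Omega)}$, and the $(\Co\Omega)^2$ contribution vanishes identically. The perimeter difference on the right-hand side is well defined because $\{u\ge\lambda\}$ and $\{v\ge\lambda\}$ agree outside $\Omega$ for a.e.\ $\lambda$, so the (possibly infinite) $(\Co\Omega)^2$ tails of the two perimeters cancel termwise.

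With the identity in hand, the implication ``level sets $s$-minimal $\Rightarrow$ function $s$-minimal'' is immediate: for any competitor $v$ of $u$, and for a.e.\ $\lambda\in\R$, the set $\{v\ge\lambda\}$ is an admissible competitor for the $s$-minimal set $\{u\ge\lambda\}$, so $\Per_s(\{u\ge\lambda\},\Omega)\le\Per_s(\{v\ge\lambda\},\Omega)$ holds pointwise in $\lambda$; integrating this in $\lambda$ and invoking the co-area identity yields~\eqref{GH2}.

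For the reverse direction, fix $\lambda\in\R$ and a competitor $F$ for $\{u\ge\lambda\}$ with $\Per_s(F,\Omega)<+\infty$. I would first reduce to the nested cases $F\supset\{u\ge\lambda\}$ and $F\subset\{u\ge\lambda\}$ by means of the submodularity of the fractional perimeter,
\[
\Per_s(E,\Omega)+\Per_s(F,\Omega)\ge \Per_s(E\cup F,\Omega)+\Per_s(E\cap F,\Omega),
\]
which follows from an elementary pointwise inequality on $|\chi_E(x)-\chi_E(y)|+|\chi_F(x)-\chi_F(y)|$ checked by Boolean case analysis. In each nested case I would build a one-parameter family of competitors $v_\delta$ of $u$ by modifying $u$ on the symmetric difference $F\triangle\{u\ge\lambda\}$, setting $v_\delta=\lambda+\delta$ on $F\setminus\{u\ge\lambda\}$ (when $F\supset\{u\ge\lambda\}$) or $v_\delta=\lambda-\delta$ on $\{u\ge\lambda\}\setminus F$ (when $F\subset\{u\ge\lambda\}$), and $v_\delta=u$ elsewhere, so that $\{v_\delta\ge\lambda\}=F$ by construction. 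A direct computation shows that the level sets $\{v_\delta\ge\mu\}$ differ from $\{u\ge\mu\}$ only for $\mu$ in a $\delta$-window around $\lambda$, and plugging $v_\delta$ into the co-area identity together with the $s$-minimality of $u$ produces a localized integrated bound on perimeters. Choosing $\lambda\pm\delta$ as generic levels of $\mu\mapsto\Per_s(\{u\ge\mu\},\Omega)$ (avoiding its at-most-countable set of jumps) and letting $\delta\to 0$ concentrates the bound at $\mu=\lambda$ and delivers $\Per_s(\{u\ge\lambda\},\Omega)\le\Per_s(F,\Omega)$. The main obstacle is precisely this limiting step, which requires a careful quantitative handling of the perimeter contributions on the transition window $(\lambda-\delta,\lambda+\delta)$, together with the measurability/selection issues caused by the possible infiniteness of $\Per_s(\{u\ge\mu\},\Omega)$ at nearby levels.
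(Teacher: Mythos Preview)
Your co-area identity and the ``viceversa'' direction are correct and coincide with the paper's argument (Proposition~\ref{fgha} via Remark~\ref{coareag}).

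The forward direction, however, has a genuine gap. Your key claim---that for the competitor $v_\delta$ the level sets $\{v_\delta\ge\mu\}$ differ from $\{u\ge\mu\}$ only for $\mu$ in a $\delta$-window around $\lambda$---is false for the construction you describe. Take the nested case $F\subset\{u\ge\lambda\}$ and set $v_\delta=\lambda-\delta$ on $A:=\{u\ge\lambda\}\setminus F$, $v_\delta=u$ elsewhere. For \emph{every} $\mu>\lambda$ one computes $\{v_\delta\ge\mu\}=\{u\ge\mu\}\cap F$, which differs from $\{u\ge\mu\}$ whenever $\{u\ge\mu\}\not\subset F$; there is no reason this should fail only near $\mu=\lambda$. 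Symmetrically, in the case $F\supset\{u\ge\lambda\}$ with $v_\delta=\lambda+\delta$ on $F\setminus\{u\ge\lambda\}$, for every $\mu\le\lambda$ one gets $\{v_\delta\ge\mu\}=\{u\ge\mu\}\cup(F\cap\{u<\mu\})$, again a discrepancy on an unbounded $\mu$-range. The co-area integral therefore does not localise to a shrinking window, and sending $\delta\to0$ produces no information about $\Per_s(\{u\ge\lambda\},\Omega)$ versus $\Per_s(F,\Omega)$. The obstacle is not the limiting step you flag, but the construction itself: modifying $u$ to a constant on a fixed set alters \emph{all} level sets through the original values of $u$ on that set, not just those near~$\lambda$.

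The paper takes a different route for this implication. It first shows (Lemma~\ref{minpm}, via the splitting in Lemma~\ref{LAxS:2.5}) that positive and negative parts of an $s$-minimal function are again $s$-minimal; iterating this with translations and dilations, the truncations
\[
\varphi_{\lambda,\eps}=\eps^{-1}\min\big\{\eps,\max\{u-\lambda+\sqrt\eps,0\}\big\}
\]
are $s$-minimal for every $\eps>0$ (Lemma~\ref{lemma19}). These are uniformly bounded and converge to $\chi_{\{u\ge\lambda\}}$, so a stability result for $s$-minimal functions under $L^1$ and tail convergence (Proposition~\ref{prop110}, Lemma~\ref{lemma111}) yields that $\chi_{\{u\ge\lambda\}}$ is an $s$-minimal function, hence $\{u\ge\lambda\}$ is an $s$-minimal set (Theorem~\ref{thm211}). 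The crucial idea you are missing is that minimality is preserved under the nonlinear operations $u\mapsto u_\pm$, which is what allows one to pass from $u$ to a sequence of bounded $s$-minimal approximants of $\chi_{\{u\ge\lambda\}}$.
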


Theorem~\ref{STER} here can be seen as a counterpart in the fractional setting of
a classical result in~\cite{bomby, SWZ} which relates the minimisers of a $1$-Laplace type
functional with the minimisers of the classical perimeter functional.
See also~\cite{MR3263922} for classical results relating functions
with least gradient and solutions of $1$-Laplace equations.
\medskip

The close relationship between the minimisation problems in Definitions~\ref{defn11}
and~\ref{PERMI} is also highlighted by the fact that
characteristic functions of 
$s$-minimal sets (according to Definition~\ref{PERMI})
are $s$-minimal functions (according to Definition~\ref{defn11}), 
with respect to all competitors in~$\W^s(\Omega)$
and not only with respect to characteristic functions. 
We give the precise result in the following theorem.

\begin{theorem}\label{TUTT}
Let $E_0 \subset \Co \Omega$ and $E$ be an $s$-minimal set in~$\Omega$
with respect to $E_0$. Then
	\[ 
		\G(\chi_E, \Omega) \leq \G (u, \Omega)
		\]
		for any $u \in \W^s(\Omega)$ such that $u= \chi_{E_0}$ in $\Co \Omega$. 		
\end{theorem}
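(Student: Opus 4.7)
The plan is to apply a fractional co-area formula to $\G(u,\Omega)$ and then invoke the $s$-minimality of $E$ slice by slice. Without loss of generality we assume $\G(u,\Omega)<+\infty$, since otherwise the inequality is trivial.

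The main ingredient is the co-area identity
\[
\G(u,\Omega)=\int_{\R}\Per_s(\{u>\lambda\},\Omega)\,d\lambda,
\]
which one obtains by writing $|u(x)-u(y)|=\int_{\R}|\chi_{\{u>\lambda\}}(x)-\chi_{\{u>\lambda\}}(y)|\,d\lambda$ and applying Fubini--Tonelli to the nonnegative integrand on $Q(\Omega)$; this is precisely the ``suitable variation'' of the fractional co-area formula referenced in the introduction.

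The next step is a trace computation: since $u=\chi_{E_0}$ almost everywhere in $\Co\Omega$ and $E_0\subset\Co\Omega$, for every $\lambda\in[0,1)$ we have
\[
\{u>\lambda\}\cap\Co\Omega \;=\; \{\chi_{E_0}>\lambda\} \;=\; E_0 \;=\; E\setminus\Omega,
\]
so that $\{u>\lambda\}$ is an admissible competitor for $E$ in the sense of Definition~\ref{PERMI}. Hence the $s$-minimality of $E$ yields $\Per_s(E,\Omega)\le\Per_s(\{u>\lambda\},\Omega)$ for almost every such $\lambda$. Combining this with the co-area identity above and discarding the nonnegative contributions from $\lambda\notin[0,1)$ gives
\[
\G(u,\Omega)\ge\int_0^1 \Per_s(\{u>\lambda\},\Omega)\,d\lambda \ge \Per_s(E,\Omega)=\G(\chi_E,\Omega),
\]
which is the desired inequality.

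The main technical delicacy is the validity of the co-area identity on the truncated domain $Q(\Omega)$ for functions in $\W^s(\Omega)$ that need not belong to $W^{s,1}(\R^n)$ globally; this is handled by noting that both sides take values in $[0,+\infty]$ and that the pointwise layer-cake identity for $|u(x)-u(y)|$ holds for arbitrary measurable $u$, so Fubini--Tonelli applies without any additional integrability hypotheses.
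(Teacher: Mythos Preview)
Your proof is correct and follows essentially the same strategy as the paper: apply the fractional co-area identity to rewrite~$\G(u,\Omega)$ as an integral of~$\Per_s(\{u>\lambda\},\Omega)$ over~$\lambda$, and then use the $s$-minimality of~$E$ slice by slice on the interval~$[0,1)$ (the paper uses~$\{u\ge\lambda\}$ on~$(0,1]$, which is equivalent). The only noteworthy difference is that the paper invokes the identity~\eqref{CoA con G}, derived via the renormalised functional~$\tilde\G$ under the tail condition~$\mathcal T_s(u,\Omega)<+\infty$, whereas you obtain the co-area identity more directly from the pointwise layer-cake formula and Fubini--Tonelli for nonnegative integrands; your route is slightly more self-contained here, but the substance of the argument is the same.
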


Now we turn our attention to the existence of minimisers. For this,
it is convenient to introduce the family of competitors for a given
datum on the external domain. That is,
given a function $\varphi \colon \Co \Omega \to \R$, we denote
	\begin{equation}\label{WsOM}
			\W^s_\varphi(\Omega) :=\big\{ u\in \W^s(\Omega) \; \big| \; u=\varphi \mbox{ in } \Co \Omega \big\}.
	\end{equation}
Moreover, for any $t>0$,
we set 
	\begin{equation}\label{Ot}
	 \Omega_t:= \big \{ y \in \Rn \; \big| \; \dist(y,\Omega)<t \big \}.
	 \end{equation}
With this notation,
in terms of existence theories, we utilise the level sets method to
prove existence of minimisers, according to Definition~\ref{defn11},
provided that the
datum is bounded in a suitable neighbourhood of the domain~$\Omega$, or, more generally,
under an integral control of the ``local tail''
of the datum. More precisely, we have the following:

\begin{theorem}\label{INTAL}
There exists $\Theta = \Theta(n,s)>1$ such that
the following statement holds true.
If~$\varphi \colon \Co \Omega \to \R$ is such that
	\begin{equation} \label{Thbe2w4}\int_\Omega\bigg[
\int_{\Omega_{\Theta \diam(\Omega) } \setminus \Omega} \frac{ |\varphi(y)|}{|x-y|^{n+s}} \, dy
		 \bigg]\,dx <+\infty,
	 \end{equation}
	then there exists an
$s$-minimal function~$u\in \W_\varphi^s(\Omega)$ in~$\Omega$. 
\end{theorem}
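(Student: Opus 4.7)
The plan is to construct $u$ via the level sets method and then invoke the converse direction of Theorem~\ref{STER}, which tells us that a function whose superlevel sets are all $s$-minimal in $\Omega$ is itself an $s$-minimal function. Concretely, for each $\lambda\in\R$ I would set $E_0^\lambda:=\{\varphi\geq\lambda\}\cap\Co\Omega$ and produce an $s$-minimal set $E^\lambda$ in $\Omega$ with $E^\lambda\setminus\Omega=E_0^\lambda$. The role of~\eqref{Thbe2w4} is to provide, for each such $\lambda$, a finite-energy competitor against which the direct method of the calculus of variations can be run: splitting the exterior tail into the annulus $\Omega_{\Theta\diam(\Omega)}\setminus\Omega$, where~\eqref{Thbe2w4} bounds the contribution of $\varphi$ (and hence of $\chi_{E_0^\lambda}$, after absorbing a fixed explicit geometric competitor such as $\Omega\cup E_0^\lambda$), and its complement $\R^n\setminus\Omega_{\Theta\diam(\Omega)}$, where the weight $|x-y|^{-n-s}$ is uniformly summable for $x\in\Omega$ once $\Theta=\Theta(n,s)$ is fixed large enough.

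Next, monotonicity of the family $\{E^\lambda\}$ must be arranged: for $\lambda_1<\lambda_2$ I want $E^{\lambda_1}\supset E^{\lambda_2}$. Because $s$-minimal sets are not unique in general (a phenomenon the paper itself demonstrates), this is not automatic. It is, however, forced by the submodularity inequality
\[
\Per_s(A\cup B,\Omega)+\Per_s(A\cap B,\Omega)\leq \Per_s(A,\Omega)+\Per_s(B,\Omega),
\]
which shows that unions and intersections of $s$-minimal sets with comparable exterior data are again $s$-minimal. One can then iterate a selection along a countable dense set of levels and extend by $E^\lambda:=\bigcup_{\mu>\lambda}E^\mu$, obtaining a monotonically nonincreasing family of $s$-minimal sets in $\Omega$ whose exterior traces agree with $E_0^\lambda$ up to null sets.

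Defining $u(x):=\sup\{\lambda\in\R : x\in E^\lambda\}$ then identifies the superlevel sets of $u$ with the sets $E^\lambda$ modulo null sets, and gives $u=\varphi$ a.e.\ in $\Co\Omega$. To land in $\W_\varphi^s(\Omega)$, one must verify that $u\in W^{s,1}(\Omega)$: the $L^1$ bound follows by truncation together with~\eqref{frch} and~\eqref{Thbe2w4}, while the seminorm is controlled by a fractional coarea inequality of the form $[u]_{W^{s,1}(\Omega)}\leq\int_\R\Per_s(E^\lambda,\Omega)\,d\lambda$, each perimeter being in turn bounded through minimality against an explicit competitor so that the right-hand side inherits $\lambda$-summability from~\eqref{Thbe2w4}. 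Applying Theorem~\ref{STER} then closes the argument. The main obstacle is precisely this last piece of energy bookkeeping: converting the weighted pointwise tail bound~\eqref{Thbe2w4} into a $\lambda$-integrable control on $\Per_s(E^\lambda,\Omega)$, while simultaneously ensuring that the monotone selection procedure does not sacrifice $s$-minimality on the (null) exceptional set of levels it introduces.
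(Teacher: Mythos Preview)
Your proposal has a genuine gap at the step you yourself flag as the main obstacle, and it also misidentifies the role of~\eqref{Thbe2w4} and of $\Theta$. Existence of each $E^\lambda$ does not need~\eqref{Thbe2w4} at all: any exterior datum admits a finite-energy competitor because $\Per_s(\Omega,\R^n)<\infty$. The real issue is the $\lambda$-integrability of $\Per_s^L(E^\lambda,\Omega)$, and here testing minimality against a competitor such as $E_0^\lambda$ (empty in $\Omega$) yields $\Per_s(E^\lambda,\Omega)\le \Per_s^{NL}(E_0^\lambda,\Omega)$, whose far-away part $\int_\Omega\int_{\Co\Omega_{\Theta d}}\chi_{E_0^\lambda}(y)\,|x-y|^{-n-s}\,dy\,dx$ can stay bounded away from zero uniformly in $\lambda$ (nothing is assumed about $\varphi$ outside the annulus), so it is not $\lambda$-integrable. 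That the far weight is summable in $y$ is true for every $\Theta>0$ and does not help; and~\eqref{frch} goes in the wrong direction for the $L^1$ bound. What is missing is the a priori estimate of Theorem~\ref{thm51}: for an $s$-minimal function $w$ with exterior datum $\psi$ one has $\|w\|_{W^{s,1}(\Omega)}\le C\,\|\Tail_s(\psi,\Omega_{\Theta d}\setminus\Omega;\cdot)\|_{L^1(\Omega)}$, and its proof is exactly where $\Theta$ is fixed, via a fractional Poincar\'e inequality together with a reabsorption that makes the far-tail coefficient $\Theta^{-s}$ small. Applied to $\chi_{E^\lambda}$ (with the choices $\lambda=0$ and $\lambda=1$ in Theorem~\ref{thm51}) and integrated in $\lambda$ through the identity~\eqref{aiNREasdtr56}, it delivers both the $L^1$ and the seminorm control from~\eqref{Thbe2w4}.

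For comparison, the paper does not run the level-set construction directly under~\eqref{Thbe2w4}. It first truncates $\varphi$ to a datum bounded on the annulus, carries out the level-set method in that setting (Theorem~\ref{exlev}, which itself invokes Theorem~\ref{thm51} and the Yin-Yang result to ensure the constructed function is bounded in $\Omega$), obtains a uniform $W^{s,1}$ bound on the truncated minimisers again via Theorem~\ref{thm51}, and passes to the limit by compactness and dominated convergence. Your direct route can be made to work once Theorem~\ref{thm51} is inserted; for the monotone selection, the paper's choice of the $s$-minimal set of \emph{maximum volume} at each level (Proposition~\ref{Prop53}) is cleaner than a countable-dense-plus-union construction and avoids any limiting argument to recover $s$-minimality at the remaining levels.
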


Theorem~\ref{INTAL} can be seen as a fractional counterpart of
a classical method in~\cite{SWZ}, which
constructs a function of least gradient
by means of level sets with minimal perimeter. 
See also~\cite{MR1246349}
for related classical results.

In our framework, a different type of existence theory, based on a ``global condition''
on the tail, will  be presented in Theorem~\ref{APP:TH:E}.
\medskip

To complete our analysis of $s$-minimal sets, we provide a simple and explicit
example of non-uniqueness: in particular, we show that when the external datum
is a sector in the plane, then there are at least two different $s$-minimal sets
according to Definition~\ref{PERMI}.

\begin{theorem}\label{NONUN}
Let $E_0  \subset \R^2$ be defined as
	\[
		E_0:= \big\{ (x,y)\in \Co B_1 \; \big| \; xy >0\big\}.
	\]
		Then there exist at least two different $s$-minimal sets in $B_1$
with respect  to $E_0$.
	\end{theorem}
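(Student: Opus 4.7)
The plan is to exploit a non-trivial symmetry of the external datum $E_0$. Let $R_{\pi/2}:\R^2\to\R^2$ denote the counter-clockwise rotation by $\pi/2$, and define on measurable subsets of $\R^2$ the map
\[
\phi(E):=\Co R_{\pi/2}(E).
\]
Two properties of $\phi$ are key. First, $\phi$ preserves the class of admissible competitors: since $R_{\pi/2}$ interchanges the first--third and the second--fourth quadrants, one has $R_{\pi/2}(E_0)=\Co B_1\setminus E_0$ up to the measure-zero coordinate axes, and hence $\phi(E)\cap\Co B_1=E_0$ whenever $E\cap\Co B_1=E_0$. Second, $\phi$ preserves the $s$-fractional perimeter, because $R_{\pi/2}$ is a linear isometry of $\R^2$ that fixes $B_1$ setwise and $\Per_s(\cdot,B_1)$ is invariant under complement. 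Consequently, $\phi$ maps $s$-minimal sets with exterior data $E_0$ to $s$-minimal sets with the same data.

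By Theorem~\ref{INTAL} applied to the bounded datum $\chi_{E_0}$ (for which the integrability hypothesis~\eqref{Thbe2w4} is trivially satisfied) combined with the level-set equivalence in Theorem~\ref{STER}, at least one $s$-minimal set $E$ in $B_1$ with $E\cap\Co B_1=E_0$ exists. If $\phi(E)\neq E$ as measurable sets, then $E$ and $\phi(E)$ are the two distinct $s$-minimal sets required, and we are done.

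To guarantee the existence of such an $E$ not fixed by $\phi$, I pass to the functional level. Let $u\in\W^s_{\chi_{E_0}}(B_1)$ be an $s$-minimal function given by Theorem~\ref{INTAL}, and form the $\phi$-conjugate $\tilde u(x):=1-u(R_{\pi/2}(x))$. A direct computation (using the change of variables $(x,y)\mapsto(R_{\pi/2}(x),R_{\pi/2}(y))$ in the double integral defining $\G$) shows that $\tilde u=\chi_{E_0}$ on $\Co B_1$ and $\G(\tilde u,B_1)=\G(u,B_1)$, so $\tilde u$ is also $s$-minimal. Convexity of $\G$ then forces the midpoint $v:=\tfrac12(u+\tilde u)$ to be $s$-minimal with the same exterior data; and, by Theorem~\ref{STER}, its super-level sets $\{v\geq\lambda\}$ for $\lambda\in(0,1)$ are all $s$-minimal sets with external datum $E_0$. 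Unless $v$ is a characteristic function on $B_1$, two distinct values of $\lambda\in(0,1)$ select two distinct $s$-minimal sets, completing the argument.

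The main obstacle is the residual sub-case in which $u=\tilde u=\chi_F$ for some $\phi$-invariant set $F$; here both the midpoint trick and the $\phi$-trick return the same $F$. In this situation one needs a separate direct argument: a promising route is to exhibit the two natural $\phi$-invariant candidates $\{xy>0\}$ and $E_0\cup(\{xy<0\}\cap B_1)$ (the "interior matches exterior" and "interior rotated by $\pi/2$" configurations) and to show, via a careful bookkeeping of the $\Per_s$-interaction terms invoking the $\phi$-invariance of the candidates together with the rotational symmetry of the kernel, that these two configurations share the same $s$-perimeter and both realize the minimum among all admissible sets.
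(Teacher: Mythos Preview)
Your symmetry map $\phi(E)=\Co R_{\pi/2}(E)$ is exactly the right object, and the reduction to the case $\phi(E)=E$ is correct. The gap is in how you dispose of that case. The midpoint trick and the passage to $s$-minimal functions do not help: if there were a \emph{unique} $s$-minimal set $E$ with datum $E_0$, then by Theorem~\ref{STER} every level set $\{u\ge\lambda\}$, $\lambda\in(0,1]$, of any $s$-minimal function $u\in\W^s_{\chi_{E_0}}(B_1)$ would coincide with $E$, forcing $u=\chi_E$ in $B_1$; the same reasoning applied to $\tilde u$ gives $\tilde u=\chi_E$, so $v=u=\tilde u$ and nothing new is produced. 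Your fallback---checking by hand that $\{xy>0\}$ and its rotated complement are both minimisers---is not a proof but a conjecture, and there is no reason to expect either of these particular sets to be $s$-minimal (the actual minimisers presumably have curved boundaries inside $B_1$).

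The paper closes the argument with a short geometric observation you are missing. Assuming $E=\phi(E)$, look at the origin. If $0\in\partial E$, then by the interior regularity of two-dimensional nonlocal minimal surfaces~\cite{SavV}, $\partial E$ is $C^\infty$ near $0$ and has a well-defined unit normal $\nu$ there; the invariance $E=R_{\pi/2}(\Co E)$ forces $\nu=-R_{\pi/2}\nu$, and taking the inner product with $\nu$ gives $1=0$. Hence $0\notin\partial E$. But then $0$ lies in the interior of $E$ or of $\Co E$, and either alternative contradicts $E=R_{\pi/2}(\Co E)$ since $R_{\pi/2}$ fixes the origin. Thus no $\phi$-fixed $s$-minimal set exists, and $E\ne\phi(E)$ furnishes the two distinct minimisers.
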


In light of Theorem~\ref{TUTT}, Theorem~\ref{NONUN}
stresses the 
 importance of  the {\em lack of convexity} of the functional~$\G$, as it can indeed lead to {\em lack of uniqueness} of minimisers. 
This should be compared with the minimisation problem for the nonlocal fractional area type functionals in \cite{teolu}, for which the uniqueness is ensured by the strict convexity of those kinds of functionals.

\subsection{A Yin-Yang theorem}

Now we present a result that states, roughly speaking, that
if the external set is void (respectively, full) in a sufficiently large
neighborhood of the domain,
then the minimiser of the fractional perimeter is also void (respectively, full).

\begin{theorem}\label{thm13}
There exists $\Theta:= \Theta(n,s)>1$ such that the following
statement holds true.
\\
Let~$E_0\subset \Co \Omega$ and $E$ be an
$s$-minimal set in~$\Omega$ with respect to $E_0$.
\\
If $$E_0\cap \left(\Omega_{\Theta \diam(\Omega)} \setminus 
\Omega\right)=\emptyset,$$ then 
	\[
		E \cap \Omega= \emptyset.
	\] 
	Similarly, if  \begin{equation}\label{F678AG:BAL:1}
E_0\cap \left(\Omega_{\Theta \diam(\Omega)} \setminus \Omega\right)
=\Omega_{\Theta \diam(\Omega)} \setminus \Omega,\end{equation} then 
	\begin{equation}\label{F678AG:BAL:2}
		E \cap \Omega= \Omega.
	\end{equation}
\end{theorem}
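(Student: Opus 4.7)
My approach is to prove the first (``empty'') statement directly by testing minimality against the competitor $F:=E\setminus\Omega$, and to deduce the second (``full'') statement from the first by passing to complements. For the complement reduction, set $\tilde E:=\R^n\setminus E$: since the $s$-perimeter is invariant under complementation, $\tilde E$ is $s$-minimal in $\Omega$ with respect to $\tilde E_0:=\Co\Omega\setminus E_0$, and hypothesis~\eqref{F678AG:BAL:1} translates exactly into $\tilde E_0\cap(\Omega_{\Theta\diam(\Omega)}\setminus\Omega)=\emptyset$. Once the first part is proved, applying it to $\tilde E$ gives $\tilde E\cap\Omega=\emptyset$, which is precisely~\eqref{F678AG:BAL:2}.

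For the first part, set $A:=E\cap\Omega$ and $R:=\diam(\Omega)$, and introduce the shorthand
\[
L(X,Y):=\int_X\int_Y \frac{dx\,dy}{|x-y|^{n+s}}.
\]
Slicing $Q(\Omega)$ into $\Omega\times\Omega$ and $(\Omega\times\Co\Omega)\cup(\Co\Omega\times\Omega)$ yields
\[
\Per_s(E,\Omega)=L(A,\Omega\setminus A)+L(A,\Co\Omega\setminus E_0)+L(\Omega\setminus A,E_0),
\]
while for the competitor we get $\Per_s(F,\Omega)=L(\Omega,E_0)=L(A,E_0)+L(\Omega\setminus A,E_0)$. Since $F$ is admissible, minimality $\Per_s(E,\Omega)\leq\Per_s(F,\Omega)$ reduces the problem to showing that
\[
L(A,\Omega\setminus A)+L(A,\Co\Omega\setminus E_0)\leq L(A,E_0)
\]
forces $|A|=0$.

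The crux is a geometric estimate. The emptiness hypothesis gives $E_0\subset\Co\Omega_{\Theta R}$ and $\Omega_{\Theta R}\setminus\Omega\subset\Co\Omega\setminus E_0$, so it is enough to establish that, for $\Theta$ large depending only on $n$ and $s$,
\[
\int_{\Co\Omega_{\Theta R}}\frac{dy}{|x-y|^{n+s}}\;<\;\int_{\Omega_{\Theta R}\setminus\Omega}\frac{dy}{|x-y|^{n+s}} \qquad\text{uniformly for } x\in\Omega.
\]
The upper bound is immediate, since every $y\in\Co\Omega_{\Theta R}$ satisfies $|x-y|\geq\Theta R$, producing a bound of the form $c_2(n,s)(\Theta R)^{-s}$. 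For the lower bound, I exploit that $\Omega\subset B_R(x)$ for each $x\in\Omega$, so the annulus $B_{2R}(x)\setminus B_R(x)$ sits inside $\Omega_{\Theta R}\setminus\Omega$ as soon as $\Theta\geq 2$, producing the uniform lower bound $c_1(n,s)R^{-s}$. Choosing $\Theta\geq\max\{2,(c_2/c_1)^{1/s}\}$ secures the strict pointwise inequality. Integrating it over $x\in A$ and combining with the reduced minimality inequality then yields $L(A,\Omega\setminus A)<0$ whenever $|A|>0$, which is impossible; hence $|A|=0$, i.e.\ $E\cap\Omega=\emptyset$ up to a negligible set.

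The main obstacle I anticipate is precisely the \emph{uniformity} of the near/far comparison in $x\in\Omega$: inscribing $\Omega$ in a ball of radius $R$ centered at each interior point (rather than at a fixed reference point) is what gives the dimension-free lower bound and isolates the dependence of $\Theta$ to $n$ and $s$ alone, ensuring the threshold can be fixed a priori, independently of $E$ and $E_0$.
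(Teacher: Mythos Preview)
Your argument is correct and considerably more direct than the paper's. The paper deduces Theorem~\ref{thm13} from Theorem~\ref{TUTT} and Corollary~\ref{cor52}: it first uses the whole functional machinery to show that~$\chi_E$ is an $s$-minimal \emph{function}, and then invokes the quantitative bound of Theorem~\ref{thm51} (proved via a competitor at the functional level, a fractional Poincar\'e inequality, and an absorption argument) to conclude that~$\chi_E$ must equal the constant~$0$ (resp.~$1$) in~$\Omega$. Your route stays entirely at the level of sets: you compare~$E$ directly with the competitor~$E\setminus\Omega$, decompose the perimeter into the interactions~$L(\cdot,\cdot)$, and reduce everything to the single pointwise inequality
\[
\int_{\Co\Omega_{\Theta R}}\frac{dy}{|x-y|^{n+s}}<\int_{\Omega_{\Theta R}\setminus\Omega}\frac{dy}{|x-y|^{n+s}}\qquad(x\in\Omega),
\]
which you obtain from~$\Omega\subset B_R(x)$ and~$B_{2R}(x)\setminus B_R(x)\subset\Omega_{\Theta R}\setminus\Omega$. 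This is the same near/far comparison that ultimately drives the paper's absorption in Theorem~\ref{thm51}, but stripped of the Poincar\'e and $W^{s,1}$-apparatus. The gain of your approach is elementarity and self-containment; the gain of the paper's is that Theorem~\ref{thm51} yields a quantitative $W^{s,1}$-estimate valid for arbitrary (not necessarily constant) ring data, which is reused later in the existence theory. One cosmetic point: you need~$\Theta>(c_2/c_1)^{1/s}$ strictly (not~$\ge$) to force the contradiction, and the conclusion~$|E\cap\Omega|=0$ is of course the statement up to null sets, consistent with the paper's conventions.
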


\begin{figure}[h!]
\includegraphics[scale=0.3]{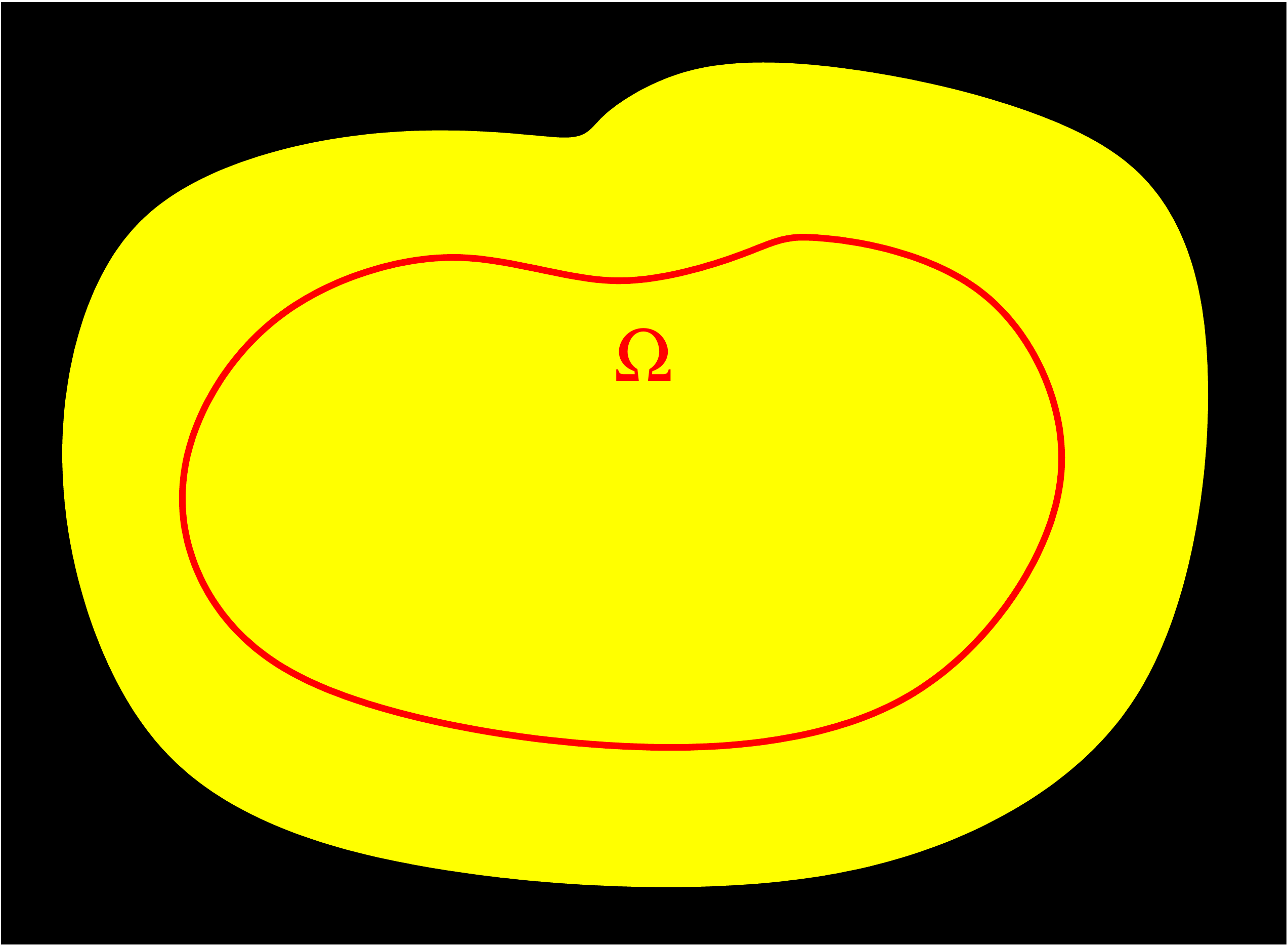}$\qquad$\includegraphics[scale=0.3]{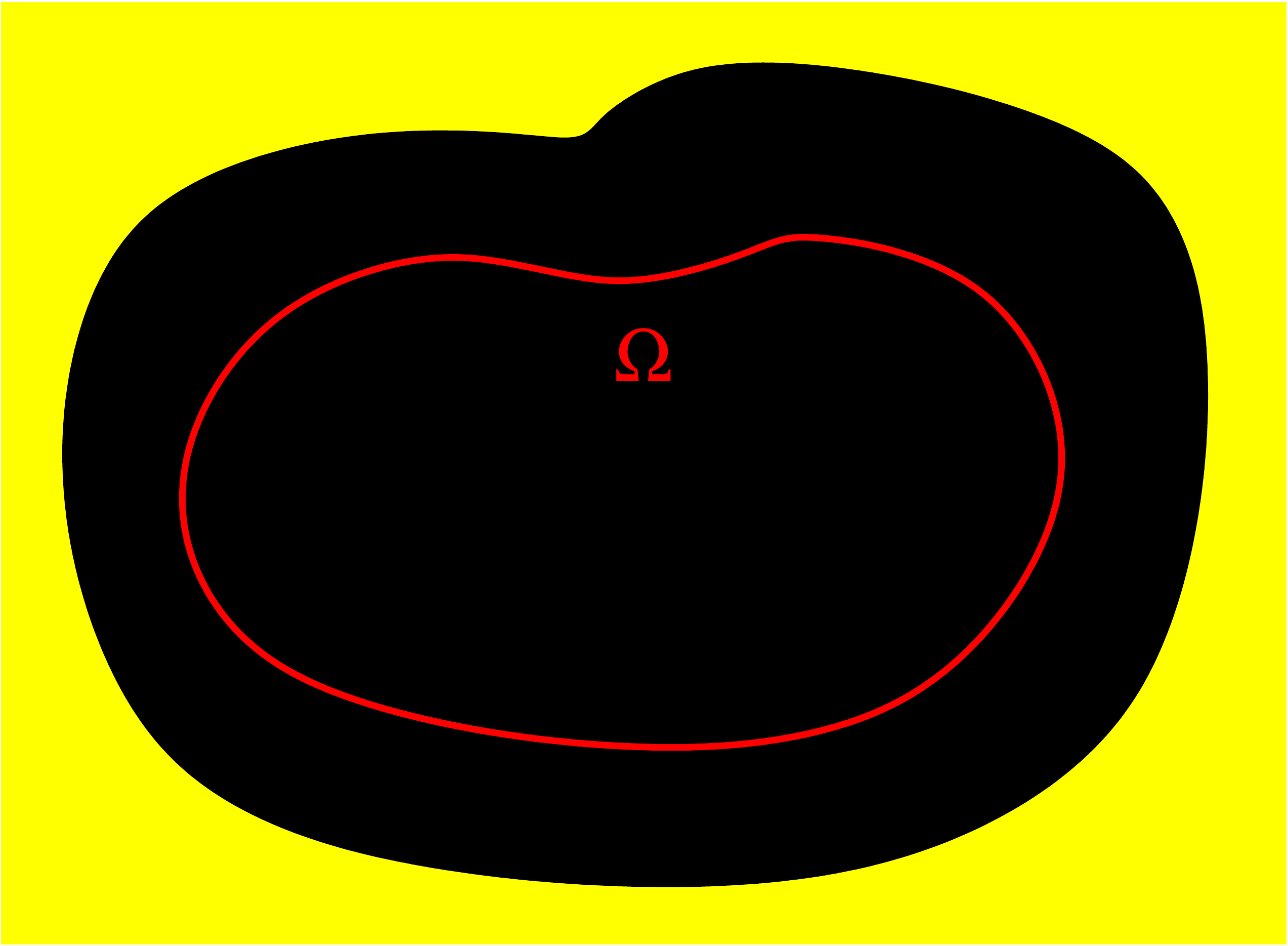}
\caption{The Yin and Yang result given in Theorem~\ref{thm13}.}\label{FIG1}
\end{figure}

In our framework, in spite of its clear and independent geometric interest,
Theorem~\ref{thm13} happens, on the one hand, to be a spin-off
of the techniques developed to prove Theorem~\ref{INTAL},
and, on the other hand, to play an important role in the proof of Theorem~\ref{INTAL}
by providing a useful pointwise and integral bound.

The reason for which we consider Theorem~\ref{thm13} as a Yin-Yang result
comes from its pictorial representation in Figure~\ref{FIG1},
which somehow resembles the
``Yin and Yang'', or ``Tai Chi'' Symbol, in which one
develops ``white'' parts inside the ``black'' ones,
in 
a blend of ``full'' and ``empty'' patterns.

We think that Theorem~\ref{thm13} gives a new and interesting insight
into nonlocal minimal surfaces, since it states that the behaviour of the datum at infinity
is not sufficient to produce nontrivial minimal sets inside a given domain.
Quite surprisingly, only data which are nontrivial near the boundary can give rise
to nontrivial minimisers: this phenomenon seems to us particularly relevant especially
when compared with the ``stickiness'' effects detected in~\cite{boundary, graph, bucluk, sticks, stickscrelle},
since in those circumstances the far-away data are able to produce very significant
effects. In this respect, the situation described in Theorem~\ref{thm13} can be
seen as a striking counterpart of the stickiness effect, since in the case treated here the far-away
interactions cannot play any significant role.

In particular, one can compare
Theorem~\ref{thm13} here with
Theorems~1.4 and~1.7 in~\cite{bucluk}. 
Roughly speaking, \cite[Theorems~1.4,~1.7]{bucluk} 
provide conditions under which,
if $s$ is sufficiently small, when
the external datum is empty at infinity,
then the $s$-minimal set is {\em empty} inside the domain. With respect to this,
Theorem~\ref{thm13} here says that, fixed~$s$, if
the external datum is full near the domain, and possibly empty at infinity,
then the $s$-minimal set is {\em full} inside the domain. That is,
a quantitative comparison between the ``smallness of~$s$'' and the ``width on
which the external datum is full'' play a crucial role in determining
the shape of the $s$-minimal set inside the domain, and {\em merely
qualitative arguments are not sufficient} to detect the different phenomena
of filling or emptying the domain.

As a matter of fact, Yin-Yang results for nonlocal minimal surfaces can only arise
from a fine balance between the mass of the prescribed set ``in the vicinity
of the domain'' and the one ``at infinity'', since
external data which are confined in a given neighborhood of the domain
end up producing void $s$-minimal sets in the domain for sufficiently small~$s$
(see~\cite[Theorems~1.4,~1.7]{bucluk}): in particular,
Theorem~\ref{thm13} here is optimal in the sense that if~\eqref{F678AG:BAL:1}
is replaced by
\begin{equation}\label{JAX:83rtu}
E_0=\Omega_{R} \setminus \Omega,\end{equation}
for a given~$R>0$, then, if~$s$ is sufficiently small,
\eqref{F678AG:BAL:2} does not hold true.
More precisely, let~$\{E_s\}_{s\in(0,1)}$ be any family of sets such that~$E_s$ is $s$-minimal in~$\Omega$ with respect to~$E_0$. Then, the density estimate given by~\cite[Theorem~1.5]{bucluk} ensures that there exists some small threshold index~${s}_\star\in(0,1)$ such that, if~$s\in(0,{s}_\star)$, then~$E_s\cap\Omega\not=\Omega$, and indeed, as observed in~\cite[Corollary~3.1]{bucluk}, in the limit~$s\searrow0$ the $s$-minimal sets become empty in the domain, i.e.
\[
\lim_{s\searrow0}|E_s\cap\Omega|=0.
\]

For the sake of completeness, we also point out that
Yin-Yang results for classical minimal surfaces are just trivially satisfied, in any case and
independently on any quantitative consideration on the ring around the domain
that is taken into account: indeed, if~\eqref{JAX:83rtu}
holds true for a given~$\delta>0$ and~$E$ is a minimiser for the classical
perimeter in the closure of~$\Omega$ with~$E=E_0$
in~$\Co\Omega$ (or, equivalently, with~$E=E_0$ along~$\partial\Omega$),
then obviously
$$ E \cap \Omega= \Omega,$$
since this configuration produces zero classical perimeter in the closure of~$\Omega$.
These observations also highlight the deep structural differences
of the nonlocal case with respect to the classical one, as far as Yin-Yang results are concerned.

\subsection{Open problems and possible future projects}
We think that the results
presented in this paper can open several fascinating
directions for future investigations. To start with, we plan to study
the Euler-Lagrange equation satisfied
by the $s$-minimal functions and to investigate its connection to the $(s,p)$-fractional
Laplacian, in the limit as $p\searrow1$, in the spirit of~\cite{MR2164415, MR3263922}.

Another interesting question is that of understanding the regularity of
$s$-minimal functions. On the one hand, $s$-minimal functions are in general not even continuous
(since characteristic functions of sets can be $s$-minimal, recall Theorem~\ref{TUTT}).
On the other hand, it is proved in~\cite{MR1246349}
that in the classical framework the minimiser
constructed via the level set method is continuous: this relies on
a strict comparison principle satisfied by classical minimal surfaces.
The investigation of the validity of general fractional
comparison principles is a compelling direction of research also
in itself. In any case, the nonlocal setting presents its own difficulties,
and comparison principles alone may not suffice to establish the
continuity of the $s$-minimal functions constructed by level sets methods
in Theorem~\ref{exlev}.

A very challenging, but extremely important, direction of research
is also related to the regularity of nonlocal minimal surfaces. Indeed,
in the classical case, one can show that the Simons' cone
is realised as a level set of a function with least gradient:
in this setting, results relating minimal functions to minimal sets are obtained and
exploited in~\cite{bomby}
in order to deduce the minimality of the Simons' cone, thus
providing an example of a singular cone minimising the classical perimeter.
In our framework, if one could manage to realise a singular cone
as a level set of an $s$-minimal function, then a direct application
of Theorem~\ref{STER} here would establish the $s$-minimality
of such a cone and thus provide an example of a singular nonlocal
minimal surface.

\subsection{Organisation of the paper}
The rest of this article is organised as follows. In Section~\ref{uno}
we analyse the structure of the
level sets of the $s$-minimal functions of Definition~\ref{defn11}
and relate them to the $s$-minimal sets of Definition~\ref{PERMI},
also proving Theorem~\ref{STER}.

Then, in Section~\ref{GSmshde23d}, we study the
characteristic functions of the $s$-minimal sets according to
Definition~\ref{PERMI} and we prove Theorem~\ref{TUTT}.

Section~\ref{GA:eucjex} deals with the
existence theory of the $s$-minimal functions of Definition~\ref{defn11} and with
the proof of Theorem~\ref{INTAL}.
For this, one also needs Theorem~\ref{thm13} in order to obtain
suitable integrable bounds, hence part of Section~\ref{GA:eucjex}
is also devoted to Yin-Yang results.

Then, Section~\ref{SJifngwhis}
presents an example of two $s$-minimal sets
sharing the same external datum, thus proving Theorem~\ref{NONUN}. 

The article ends with an appendix that collects some ancillary results.

\section{Level sets of $s$-minimal functions, $s$-minimal sets,
and proof of Theorem~\ref{STER}}\label{uno}
 
In this section we analyse the level sets of the $s$-minimal functions obtained in view of
Definition~\ref{defn11} and we relate them to the $s$-minimal sets
described in Definition~\ref{PERMI}.

To this end, we introduce some notation.
As customary, we will consider the ``positive'' and ``negative'' parts
of a function, defined by
 \bgs{
 	 u_+(x):= \max\{ u(x),0\}, \quad u_{-}(x):=\min\{ u(x),0\}.
  }
We notice that
 \eqlab{
 	\label{pm}
 	 u(x) = u_+(x) +u_-(x), \quad \mbox{ and } \quad |u(x)|= u_+(x)-u_-(x), \quad \mbox{ or }  \quad|u(x)|=|u_+(x)|+|u_-(x)|.
 	 }
Also, in the setting of~\eqref{PER}, it is convenient to write
       \bgs{
        \Per_s(E,\Omega)= \Per_s^{L} (E,\Omega) + \Per_s^{NL} (E,\Omega),} 
      where
    \begin{equation}\label{PLa}\begin{split}&
    	 	\Per_s^{L} (E,\Omega) := \frac12 \int_{\Omega}\int_{\Omega}
    	 	 \frac{|\chi_{E}(x) -\chi_E(y)|}{|x-y|^{n+s}} \, dx \, dy
    	 	 =\frac{1}{2}[\chi_E]_{W^{s,1}(\Omega)} \quad {\mbox{and }}
    	 	\\\qquad&\Per_s^{NL} (E, \Omega) := \int_{\Omega}\int_{\Co \Omega} \frac{|\chi_{E}(x) -\chi_E(y)|}{|x-y|^{n+s}} \, dx \, dy\end{split}
         \end{equation}
       represent respectively the   ``local'' and the ``nonlocal'' contributions to the fractional perimeter
(of course, even the ``local'' contribution is in fact of nonlocal type,
but the interactions are confined in the domain~$\Omega$). \medskip

Now, we relate the nonlocal perimeter to a renormalised version
of the functional in~\eqref{GAG}. 
To this aim, given $u \in \W^s(\Omega)$, we define the function
 	\sys[\tilde u :=]
 	{
 		&u & \mbox{ in  } \; & \Co \Omega,\\
 		&0& \mbox{ in  } \; & \Omega.
 	}
Notice that, clearly,
\begin{equation}\label{ALms3uencncdX}
{\mbox{if~$u=v$ in~$\Co \Omega$, then~$\tilde u=\tilde v$.}}
\end{equation}
Then, we define
	\begin{equation}\label{tG}
		\tilde \G (u,\Omega):=
		 \frac12 \iint_{Q(\Omega)} \big(|u(x) -u(y)|-|\tilde u (x) -\tilde u(y)|\big) 
		 \frac{dx\, dy}{|x-y|^{n+s}}  .
		\end{equation}
We observe that the setting of the functional $\tilde \G$ in~\eqref{tG}
is well posed, thanks to~\eqref{GHE}. 
The reader can also compare~\eqref{tG} with~\eqref{GAG}, to see
in which sense we consider~$\tilde \G$ a renormalised version of~$\G$: in our framework,
$\tilde \G$
presents the advantage of ``canceling'' the common tails of the integrands (which could
in principle be divergent when considered separately).
 \medskip

Interestingly, the minimisation of~$\tilde \G$ is directly related to
the notion introduced in Definition~\ref{defn11}, and also to the minimisation of~$\G$,
if the tail of the functional is finite.
We consider the ``global tail'' of the functional in~\eqref{GAG}, that is the contribution
coming from the interactions involving points outside the domain~$\Omega$.
To this end, we define
	\eqlab{ \label{coda}
		 \mathcal T_s(u,\Omega) := \int_{\Omega} \left( \int_{\Co \Omega} \frac{ |u(y)|}{|x-y|^{n+s}} dy \right) dx .
	}
	
\begin{lemma}\label{TGAet}
Let~$u\in \W^s(\Omega)$.
Then, $u$ is an $s$-minimal function according to Definition~\ref{defn11}
if and only if 
	\begin{equation}\label{HS:ksncb7rhhbvcsjjs}
	\tilde \G (u,\Omega) = \inf \{ \tilde \G (v,\Omega) \, | \, v\in \W^s(\Omega), u=v \mbox{  almost everywhere in  } \Co \Omega \}.
	\end{equation}
Moreover, if
\begin{equation}\label{HS:ksncb7rhhbvcsjjs2}
\mathcal T_s(u,\Omega) <+\infty,\end{equation}
then $u$ is an
$s$-minimal function
according to Definition~\ref{defn11} if and only if it is a minimiser of $\G$.	
\end{lemma}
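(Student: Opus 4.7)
The plan is to exploit the observation~\eqref{ALms3uencncdX} that $\tilde u$ depends only on the exterior datum $u|_{\Co\Omega}$: hence whenever $u=v$ almost everywhere in $\Co\Omega$ one has $\tilde u=\tilde v$ pointwise, and the $|\tilde u(x)-\tilde u(y)|$-terms cancel when I subtract $\tilde\G(v,\Omega)$ from $\tilde\G(u,\Omega)$. This produces the identity
\[
\tilde\G(u,\Omega)-\tilde\G(v,\Omega)=\frac12\iint_{Q(\Omega)}\bigl(|u(x)-u(y)|-|v(x)-v(y)|\bigr)\frac{dx\,dy}{|x-y|^{n+s}},
\]
in which each side is finite thanks to the bound~\eqref{GHE} (which in turn is a consequence of the Hardy inequality~\eqref{frch}). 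The right-hand side matches the quantity appearing in Definition~\ref{defn11} up to an immaterial factor $1/2$, so it is non-positive for every admissible competitor $v$ if and only if $u$ minimises $\tilde\G$; this gives the equivalence~\eqref{HS:ksncb7rhhbvcsjjs}.

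For the second statement I would split the integration region $Q(\Omega)$ into the three blocks $\Omega\times\Omega$, $\Omega\times\Co\Omega$ and $\Co\Omega\times\Omega$ and, using that $\tilde u\equiv 0$ on $\Omega$ and $\tilde u=u$ on $\Co\Omega$, verify by direct computation and symmetrisation in $x$ and $y$ that
\[
\G(u,\Omega)=\tilde\G(u,\Omega)+\mathcal T_s(u,\Omega).
\]
Since $\tilde\G(u,\Omega)$ is always finite on $\W^s(\Omega)$ (again by~\eqref{GHE}, applied with $v=\tilde u$, which itself relies on~\eqref{frch}), the assumption~\eqref{HS:ksncb7rhhbvcsjjs2} is exactly what is needed in order to make $\G(u,\Omega)$ finite.

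Finally, for any $v\in\W^s(\Omega)$ agreeing with $u$ on $\Co\Omega$ one has $\mathcal T_s(v,\Omega)=\mathcal T_s(u,\Omega)<+\infty$, so $\G(v,\Omega)$ is finite as well. Subtracting the decomposition $\G(\cdot,\Omega)=\tilde\G(\cdot,\Omega)+\mathcal T_s(\cdot,\Omega)$ for $u$ and for $v$, the common tail cancels and yields $\G(u,\Omega)-\G(v,\Omega)=\tilde\G(u,\Omega)-\tilde\G(v,\Omega)$, whence $u$ minimises $\G$ precisely when it minimises $\tilde\G$, i.e.\ precisely when it is $s$-minimal. The only point requiring care is keeping every quantity finite before subtracting integrals, which is why the hypothesis $\mathcal T_s(u,\Omega)<+\infty$ is needed in the second statement but not in the first: the renormalised functional $\tilde\G$ always produces a finite difference, while $\G$ itself requires the tail to be integrable.
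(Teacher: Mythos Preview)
Your proposal is correct and follows essentially the same approach as the paper: cancel the common $|\tilde u(x)-\tilde u(y)|$ term using~\eqref{ALms3uencncdX} and~\eqref{GHE} to obtain the identity $\tilde\G(u,\Omega)-\tilde\G(v,\Omega)=\frac12\iint_{Q(\Omega)}(|u(x)-u(y)|-|v(x)-v(y)|)\,|x-y|^{-n-s}\,dx\,dy$, and then, under the tail assumption, establish $\G(u,\Omega)=\tilde\G(u,\Omega)+\mathcal T_s(u,\Omega)$ (the paper's~\eqref{GAjgisq4}) and use that $\mathcal T_s$ depends only on the exterior datum. The only cosmetic difference is that the paper bounds $\G(u,\Omega)$ directly via the triangle inequality and~\eqref{frch} before writing~\eqref{GAjgisq4}, whereas you deduce finiteness of $\G(u,\Omega)$ from the finiteness of $\tilde\G(u,\Omega)$ and $\mathcal T_s(u,\Omega)$; both routes are equivalent.
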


\begin{proof} If~$u$, $v\in \W^s(\Omega)$ with~$u=v$ in~$\Co\Omega$,
we see that
\bgs{
	2\big(
\tilde \G (u,\Omega) - \tilde \G (v,\Omega)\big)=&\;
\iint_{Q(\Omega)} \big(|u(x) -u(y)|-|\tilde u (x) -\tilde u(y)|\big) 
		 \frac{dx\, dy}{|x-y|^{n+s}}\\
		 &\; -
\iint_{Q(\Omega)} \big(|v(x) -v(y)|-|\tilde v (x) -\tilde v(y)|\big) 
		 \frac{dx\, dy}{|x-y|^{n+s}}.
}
Since the last two integrands are summable, thanks to~\eqref{GHE} and recalling~\eqref{ALms3uencncdX},
we  obtain that
\bgs{
 &\; 2\big(
\tilde \G (u,\Omega) - \tilde \G (v,\Omega)\big)\\
=&\; 
\iint_{Q(\Omega)} \big(|u(x) -u(y)|-|\tilde u (x) -\tilde u(y)|-
|v(x) -v(y)|+|\tilde v (x) -\tilde v(y)|\big) 
		 \frac{dx\, dy}{|x-y|^{n+s}}\\
=&\;
\iint_{Q(\Omega)} \big(|u(x) -u(y)|-
|v(x) -v(y)| \big) 
		 \frac{dx\, dy}{|x-y|^{n+s}}.
	}
This yields that~$u$ is an $s$-minimal function according to Definition~\ref{defn11}
if and only if~\eqref{HS:ksncb7rhhbvcsjjs} is satisfied.

Let us now suppose that~\eqref{HS:ksncb7rhhbvcsjjs2} holds true.
Given the definition of $\tilde u$, we have that
		 		\eqlab{ \label{spliten}
		\tilde \G (u, \Omega) =		
		\frac12 \int_{\Omega} \int_{\Omega} \frac{|u(x)-u(y)|}{|x-y|^{n+s}}
		 \, dx \, dy 
		 + \int_{\Omega}  \left( \int_{\Co \Omega} \frac{|u(x)-u(y)| -|u(y)|}{|x-y|^{n+s}}  \, dy\right) dx
		.
		 }
		 Using the triangle inequality, we observe that
		 \bgs{
		 \iint_{Q(\Omega)} \frac{ |u(x)-u(y)|}{|x-y|^{n+s}} dx\, dy
		 \leq &\; [u]_{W^{s,1}(\Omega)} + 2 \mathcal T_s(u,\Omega) + 2 \int_\Omega dx\left(\int_{\Co \Omega} \frac{|u(x)|}{|x-y|^{n+s}} \, dy\right) 
		 \\
		 \leq &\; C\|u\|_{W^{s,1}(\Omega)} + 2 \mathcal T_s(u,\Omega)
		 ,}
		 by~\eqref{frch}.
		 Therefore we can write~\eqref{spliten} as
		 \begin{equation}\label{GAjgisq4}
		 	\tilde \G(u,\Omega) = \G(u,\Omega)-\mathcal T_s(u,\Omega),
		 \end{equation}
given that both terms are finite.	 
Therefore, for any $v\in \W^s(\Omega)$ such that $u=v $ in $\Co \Omega$ it holds that
$$ \tilde \G (u,\Omega)-\tilde \G (v,\Omega)=\G (u,\Omega)- \G (v,\Omega),$$
from which we obtain that $u$ is an
$s$-minimal function
according to Definition~\ref{defn11} if and only if it is a minimiser of $\G$.	
\end{proof}

Using the notation above,
we will use the following co-area formula for the functional $\tilde \G$
related to 
the fractional perimeter of the corresponding level sets.

\begin{proposition}\label{coarea}
Let $\Omega \subset \Rn$ be a bounded open set and let  $u\colon \Omega \to \R$. Then
	\eqlab{ \label{hhh}
	\frac12[ u]_{W^{s,1}(\Omega)} = \int_{-\infty}^\infty \Per_s^{L}(\{ u\geq t\}, \Omega) \, dt
	.}
In particular $u\in W^{s,1}(\Omega)$ if and only if the right hand side of equation~\eqref{hhh} is finite.\\
Moreover, if $\Omega$ has Lipschitz boundary, it holds that
	\eqlab{
	\label{jhgf}
	\tilde \G (u,\Omega) = \int_{-\infty}^{\infty} \Big(\Per_s( \{u \geq t\}, \Omega ) - \Per_s( \{\tilde u \geq t\}, \Omega ) \Big) \, dt
	}
	for any $u\in \W^s(\Omega)$.
\end{proposition}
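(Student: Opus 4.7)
The plan is to reduce both identities to the pointwise \emph{layer-cake} formula
\[ |a - b| = \int_{-\infty}^{\infty} \big|\chi_{\{a \geq t\}} - \chi_{\{b \geq t\}}\big| \, dt \qquad \text{for all } a,b\in\R, \]
which holds because the integrand equals the characteristic function of the interval with endpoints $\min\{a,b\}$ and $\max\{a,b\}$. First I would apply this with $a = u(x)$, $b = u(y)$, divide by $|x - y|^{n+s}$, and integrate over $\Omega \times \Omega$: since everything is non-negative, Fubini--Tonelli yields \eqref{hhh} immediately, and the ``$u\in W^{s,1}$ iff the right-hand side is finite'' claim follows from the equality of the two sides.

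For \eqref{jhgf}, I apply layer-cake to both $|u(x)-u(y)|$ and $|\tilde u(x)-\tilde u(y)|$ and subtract, so that the integrand of $\tilde\G(u,\Omega)$ becomes $\int_{-\infty}^\infty \ell_t(x,y)\,dt$ with
\[ \ell_t(x,y) := \big|\chi_{\{u \geq t\}}(x) - \chi_{\{u \geq t\}}(y)\big| - \big|\chi_{\{\tilde u \geq t\}}(x) - \chi_{\{\tilde u \geq t\}}(y)\big|. \]
To exchange $\int dt$ with $\iint_{Q(\Omega)} dx\,dy / |x-y|^{n+s}$ I need absolute integrability, so I split $Q(\Omega)$ into $\Omega \times \Omega$ and $(\Omega \times \Co\Omega) \cup (\Co\Omega \times \Omega)$. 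On $\Omega \times \Omega$, since $\tilde u \equiv 0$, the second bracket in $\ell_t$ vanishes and $\ell_t\geq 0$, so Tonelli applies directly. On $\Omega \times \Co\Omega$ (the other mixed piece being symmetric), since $u = \tilde u$ on $\Co\Omega$ and $\tilde u(x)=0$, the reverse triangle inequality $\big||A - B| - |C - B|\big| \leq |A - C|$ gives $|\ell_t(x,y)| \leq \big|\chi_{\{u \geq t\}}(x) - \chi_{(-\infty,0]}(t)\big|$, and integrating in $t$ via layer-cake once more shows $\int_{-\infty}^\infty |\ell_t(x,y)|\,dt \leq |u(x)|$. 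The fractional Hardy inequality~\eqref{frch} then produces
\[ \int_\Omega\int_{\Co\Omega}\int_{-\infty}^\infty \frac{|\ell_t(x,y)|}{|x-y|^{n+s}} \, dt\,dy\,dx \leq \int_\Omega |u(x)| \int_{\Co\Omega} \frac{dy}{|x-y|^{n+s}} \, dx \leq C\|u\|_{W^{s,1}(\Omega)} < \infty. \]
With absolute integrability in hand, Fubini swaps the order of integration and identifies the $(x,y)$-integral at level $t$ as $2\big(\Per_s(\{u\geq t\},\Omega) - \Per_s(\{\tilde u\geq t\},\Omega)\big)$, proving~\eqref{jhgf}.

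I expect the main subtlety to be precisely this justification of Fubini in~\eqref{jhgf}: the individual nonlocal perimeters $\Per_s(\{u\geq t\},\Omega)$ and $\Per_s(\{\tilde u\geq t\},\Omega)$ need not be finite for generic $u\in\W^s(\Omega)$, so the identity must be read at the integrand level rather than as a difference of two finite quantities. The interplay between the reverse triangle inequality (which uses that $u$ and $\tilde u$ agree outside $\Omega$) and the fractional Hardy estimate~\eqref{frch} is what extracts the cancellation needed to make the $\ell_t$-integrand absolutely integrable, and this is also where the Lipschitz hypothesis on $\partial\Omega$ enters the argument.
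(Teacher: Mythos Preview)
Your argument for~\eqref{jhgf} is essentially the same as the paper's: both reduce to the layer-cake identity, bound the signed integrand on $\Omega\times\Co\Omega$ via the reverse triangle inequality by $|\chi_{\{u\geq t\}}(x)-\chi_{(-\infty,0]}(t)|$, integrate in $t$ to get $|u(x)|$, and invoke the Hardy inequality~\eqref{frch} to justify Fubini. The only organisational difference is that the paper first passes through the splitting~\eqref{spliten} into local and nonlocal pieces, whereas you work directly on $Q(\Omega)$; this is cosmetic.

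There is, however, one genuine omission in your treatment of the first part. Equality~\eqref{hhh} only tells you that $[u]_{W^{s,1}(\Omega)}<\infty$ if and only if the right-hand side is finite, but membership in $W^{s,1}(\Omega)$ also requires $u\in L^1(\Omega)$. The paper closes this by invoking a separate lemma (a Poincar\'e-type bound for bounded $\Omega$, \cite[Lemma~D.1.2]{tesilu}) to deduce $u\in L^1(\Omega)$ from finiteness of the seminorm; your sentence ``follows from the equality of the two sides'' skips this step.
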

\begin{proof}
	According to~\cite{DePhil,visintin} (see also~\cite{approxLuk,bucval}), by Fubini-Tonelli we have~\eqref{hhh}.
	We remark moreover that  if the seminorm~$[u]_{W^{s,1}(\Omega)}$ is finite, by~\cite[Lemma D.1.2]{tesilu}, we have that~$u\in L^1(\Omega)$, hence indeed $u\in W^{s,1}(\Omega)$.
This proves the second statement in
Proposition~\ref{coarea}, and we now focus on the proof of
formula~\eqref{jhgf}.
\\
For this, we notice that
	\sys[\{ \tilde u \geq t\}\cap \Omega=]
		{ 
		&\Omega  && {\mbox{if }}t\leq 0 ,\\
		&\emptyset&& {\mbox{if }}t>0
		,}
		which implies that $\Per_s^L (\{ \tilde u \geq t\},\Omega)=0$ for every $t\in \R$.\\
Thus, by~\eqref{hhh},
		\eqlab{\label{unloc}
		\frac12	\int_{\Omega} \int_{\Omega} \frac{|u(x)-u(y)|}{|x-y|^{n+s}} \, dx \, dy 
		=  
	\int_{-\infty}^\infty \Big( \Per_s^L( \{u\geq t\},\Omega)-\Per_s^L (\{ \tilde u \geq t\},\Omega)\Big) \, dt.
	}
	On the other hand, 
we observe that
\bgs{
	\int_0^\infty \chi_{ \{ u\geq t \}} (x) \, dt = u_{+}(x), \qquad{\mbox{and}}\qquad 	-\int_{-\infty} ^0 \chi_{ \{ u\leq t \}} (x) \, dt = u_{-}(x).
}
Using this and~\eqref{pm}, we find that
	\eqlab{ \label{288}
	\int_{-\infty}^\infty  |\chi_{ (-\infty,0] } (t) -\chi_{ \{u\geq t \}} (y)|\, dt
\,=\,& \int_{0}^\infty  \chi_{ \{u\geq t \}} (y)\, dt+
	\int_{-\infty}^0  \big(1 -\chi_{ \{u\geq t \}} (y)\big)\, dt
\\	 =\,&\int_{0}^\infty   \chi_{ \{u\geq t \}} (y)\, dt + \int_{-\infty}^0
\chi_{ \{u< t \}} (y)\, dt 
\\=\,& u_+(y)-u_-(y)\\=\,&|u(y)|.
	}
Furthermore, for any~$x, y\in\R^n$, we have that
\begin{equation}\label{SIM34}
|u(x)-u(y)|=\int_{-\infty}^{+\infty}|\chi_{ \{u\ge t\} } (x) -\chi_{ \{u\geq t \}} (y)|\, dt.
\end{equation}
To check this, we can suppose that~$u(x)\ge u(y)$. In this way, we have that
if~$t<u(y)$ then~$\chi_{ \{u\ge t\} } (x) =\chi_{ \{u\geq t \}} (y)=0$,
and if~$t>u(y)$ then~$\chi_{ \{u\ge t\} } (x) =\chi_{ \{u\geq t \}} (y)=1$.
This yields that
$$ \int_{-\infty}^{+\infty}|\chi_{ \{u\ge t\} } (x) -\chi_{ \{u\geq t \}} (y)|\, dt=
\int_{u(y)}^{u(x)}|\chi_{ \{u\ge t\} } (x) -\chi_{ \{u\geq t \}} (y)|\, dt
=\int_{u(y)}^{u(x)}\, dt=u(x)-u(y),$$
proving~\eqref{SIM34}.
\\
Then, making use of~\eqref{288}
and~\eqref{SIM34},
we conclude that
	\bgs{
	&\;	\int_{\Omega}  \left( \int_{\Co \Omega} \frac{|u(x)-u(y)| -|u(y)|}{|x-y|^{n+s}}  \, dy\right) dx
		\\
		=&\;
		\int_{\Omega}\left( \int_{\Co \Omega}  
		\left( \int_{-\infty}^\infty |\chi_{ \{u\geq t \} } (x) -\chi_{ \{u\geq t \}} (y)| 
			- |\chi_{ (-\infty,0] } (t) -\chi_{ \{u\geq t \}} (y)|\, dt 				
		\right)
			\frac{ dy}{|x-y|^{n+s}} \right) \, dx.
	}
In order to exchange the integrals, we need to check that the integrand is summable. For this, we remark that 
\bgs{
&	\int_{\Omega} \int_{\Co \Omega} \int_{-\infty}^\infty \big| |\chi_{ \{u\geq t \} } (x) -\chi_{ \{u\geq t \}} (y)| 
	- |\chi_{ (-\infty,0]} (t) -\chi_{ \{u\geq t \}} (y)| \big|   	\frac{dt \, dy \, dx}{|x-y|^{n+s}},
	\\
	\leq &\; 	\int_{\Omega} \int_{\Co \Omega} \int_{-\infty}^\infty \big| \chi_{ \{u\geq t \} } (x) - \chi_{ (-\infty,0] } (t) \big|   	\frac{dt \, dy \, dx}{|x-y|^{n+s}}
	\\
	= &\; 	\int_{\Omega} \left(\int_{\Co \Omega}  \frac{ |u(x) |} {|x-y|^{n+s}}  \, dy\right) dx\leq C \|u\|_{W^{s,1}(\Omega)},
	}
	thanks to~\eqref{288} and to~\eqref{frch}. \\
Thus, 
since,  given $t\in \R$,
\begin{eqnarray*}&&
		\chi_{ \{ \tilde u \geq t\} } (x) =\chi_{(-\infty,0]}(t)\qquad{\mbox{for any $x\in \Omega$,}} \quad {\mbox{and }}\\
&&
\chi_{ \{ \tilde u \geq t\} } (x) =\chi_{\{u\ge t\}}(x)\qquad{\mbox{for any $x\in \Co\Omega$,}}
	\end{eqnarray*}
we obtain 
	\bgs{
	&\;	\int_{\Omega}  \left( \int_{\Co \Omega} \frac{|u(x)-u(y)| -|u(y)|}{|x-y|^{n+s}}  \, dy\right) dx
		\\
		=&\;
	\int_{-\infty}^\infty \left(	\int_{\Omega} \left(\int_{\Co \Omega}  
		 |\chi_{ \{u\geq t \} } (x) -\chi_{ \{u\geq t \}} (y)| 
			- |\chi_{ (-\infty,0] } (t) -\chi_{ \{u\geq t \}} (y)| 				
		\frac{ dy}{|x-y|^{n+s}}\right) \, dx \right) \, dt
		\\=&\;
	\int_{-\infty}^\infty \left(	\int_{\Omega} \left(\int_{\Co \Omega}  
		 |\chi_{ \{u\geq t \} } (x) -\chi_{ \{u\geq t \}} (y)| 
			- |\chi_{ \{\tilde u\ge t\} } (x) -\chi_{ \{\tilde u\geq t \}} (y)| 				
		\frac{ dy}{|x-y|^{n+s}}\right) \, dx \right) \, dt
		\\
		=&\; \int_{-\infty}^\infty\left( \Per^{NL}_s(\{u\geq t \},\Omega) - \Per^{NL}_s(\{\tilde u\geq t \},\Omega)\right) \, dt.
	}
This and~\eqref{unloc},  recalling also~\eqref{spliten},
give the desired result in~\eqref{jhgf}.
\end{proof}

\begin{remark} \label{coareag} Let $u,v\in \W^s(\Omega)$ such that $u=v$ almost everywhere in $\Co \Omega$. Then, by~\eqref{ALms3uencncdX} and~\eqref{jhgf} we have 
\bgs{
 	&\iint_{Q(\Omega)} |u(x) -u(y)|-|v(x)  - v(y)|\, \frac{dx\, dy}{|x-y|^{n+s}} 
 	\\
 	&=\int_{-\infty}^{\infty} \big(\Per_s( \{u \geq t\}, \Omega ) - \Per_s( \{v \geq t\}, \Omega ) \big) \, dt.
 	}
Moreover, if~$\mathcal T_s(u,\Omega)<+\infty$, one can use~\eqref{jhgf}
and~\eqref{GAjgisq4} to see that
\begin{equation}\label{CoA con G}
\begin{split}
\G(u,\Omega)\,&=
\tilde \G(u,\Omega) +\mathcal T_s(u,\Omega)\\
&=\int_{-\infty}^{\infty} \Big(\Per_s( \{u \geq t\}, \Omega ) - \Per_s( \{\tilde u \geq t\}, \Omega ) \Big) \, dt
+\mathcal T_s(u,\Omega)\\
&=\int_{-\infty}^{\infty} \Big(\Per_s( \{u \geq t\}, \Omega ) - \Per_s^{NL}( \{\tilde u \geq t\}, \Omega ) \Big) \, dt
+\mathcal T_s(u,\Omega)\\
&=\int_{-\infty}^{\infty} \Per_s( \{u \geq t\}, \Omega )\,dt.
\end{split}\end{equation}
\end{remark}

Now we point out that the functional $\tilde \G$ can be ``nicely
split'' between the positive and the negative parts. Interestingly,
this useful fact is not a direct consequence of~\eqref{tG}, but it rather relies
on the co-area formula stated in Proposition~\ref{coarea}.

\begin{lemma}\label{LAxS:2.5}
For every $u\in \W^s(\Omega)$, it holds that
\[ \tilde \G (u,\Omega)= \tilde \G (u_+,\Omega) +\tilde \G (u_-,\Omega).\]
\end{lemma}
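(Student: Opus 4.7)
The plan is to reduce the identity to a computation at the level of superlevel sets, and then conclude by means of the co-area formula~\eqref{jhgf} provided by Proposition~\ref{coarea}. Note first that $u_+$ and $u_-$ both belong to $\W^s(\Omega)$, since $|u_\pm(x)-u_\pm(y)|\le|u(x)-u(y)|$ implies $u_\pm\in W^{s,1}(\Omega)$, and measurability on $\R^n$ is clear.

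The key step is the following explicit description of the superlevel sets. For the positive part one has $\{u_+\geq t\}=\{u\geq t\}$ when $t>0$ and $\{u_+\geq t\}=\R^n$ when $t\leq 0$; analogously, for the negative part one has $\{u_-\geq t\}=\emptyset$ when $t>0$ and $\{u_-\geq t\}=\{u\geq t\}$ when $t\leq 0$. I would then compare these with the truncations $\widetilde{u_+}$ and $\widetilde{u_-}$: since $\widetilde{u_+}$ vanishes in $\Omega$ and coincides with $u_+$ on $\Co\Omega$, and recalling that $\tilde u$ vanishes in $\Omega$ as well, one checks that
\[
\{\widetilde{u_+}\geq t\}=\{\tilde u\geq t\}\;\;\text{for }t>0,\qquad \{\widetilde{u_+}\geq t\}=\R^n\;\;\text{for }t\leq 0,
\]
and, symmetrically,
\[
\{\widetilde{u_-}\geq t\}=\emptyset\;\;\text{for }t>0,\qquad \{\widetilde{u_-}\geq t\}=\{\tilde u\geq t\}\;\;\text{for }t\leq 0.
\]

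Once these identities are in hand, applying~\eqref{jhgf} to $u_+$ and $u_-$ yields
\[
\tilde\G(u_+,\Omega)=\int_0^{\infty}\big(\Per_s(\{u\geq t\},\Omega)-\Per_s(\{\tilde u\geq t\},\Omega)\big)\,dt,
\]
\[
\tilde\G(u_-,\Omega)=\int_{-\infty}^{0}\big(\Per_s(\{u\geq t\},\Omega)-\Per_s(\{\tilde u\geq t\},\Omega)\big)\,dt,
\]
since in the ranges $t\leq 0$ for $u_+$ and $t>0$ for $u_-$ the two superlevel sets coincide (both equal $\R^n$ or both equal $\emptyset$) and hence the integrand vanishes. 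Summing the two displays and invoking~\eqref{jhgf} once more for $u$ itself delivers the claimed identity.

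The only delicate point is the bookkeeping around the superlevel sets at $t=0$ and the verification that $\{\widetilde{u_\pm}\geq t\}$ behaves as expected on each side of $\Omega$; this is purely set-theoretic, so I do not expect any real analytic obstacle, and the finiteness of all integrals involved is granted by~\eqref{GHE} together with Proposition~\ref{coarea}.
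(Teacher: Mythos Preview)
Your proposal is correct and follows essentially the same approach as the paper: both arguments reduce the identity to the co-area formula~\eqref{jhgf} and the elementary description of the superlevel sets of $u_\pm$ and $\widetilde{u_\pm}$, differing only in the order of presentation (the paper starts from $\tilde\G(u,\Omega)$ and splits the integral at $t=0$, whereas you compute $\tilde\G(u_+,\Omega)$ and $\tilde\G(u_-,\Omega)$ separately and then sum). Your explicit verification that $u_\pm\in\W^s(\Omega)$ is a welcome addition.
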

\begin{proof}
We notice that for any $t\geq 0$
		\[ 	 \{u \geq t\}= \{u_+ \geq t\}\qquad{\mbox{and}}  \qquad \{u _-\geq  t\}= \emptyset ,\]
		while for $t<0$
			\[ 	 \{u \geq t\}= \{u_- \geq t\}\qquad{\mbox{and}} \qquad \{u _+\geq  t\}=\Rn.\]
Therefore,
we have that
\bgs{ 
		&\int_{-\infty}^{\infty} \big(\Per_s( \{u \geq t\}, \Omega ) - 	\Per_s( \{\tilde u \geq t\}, \Omega ) \big) \, dt
		\\
		= &\;
		\int_{0}^{\infty} \big(\Per_s( \{u \geq t\}, \Omega ) - \Per_s( \{\tilde u \geq t\}, \Omega ) \big) \, dt
		\\
		&\; + \int_{-\infty}^{0} \big(\Per_s( \{u \geq t\}, \Omega ) - \Per_s( \{\tilde u \geq t\}, \Omega ) \big) \, dt
		\\
			= &\;
		\int_{0}^{\infty} \big(\Per_s( \{u_+ \geq t\}, \Omega ) - \Per_s( \{\tilde u_+ \geq t\}, \Omega ) \big) \, dt
		\\
		&\; + \int_{-\infty}^{0} \big(\Per_s( \{u_- \geq t\}, \Omega ) - \Per_s( \{\tilde u_- \geq t\}, \Omega ) \big) \, dt\\
		= &\;
		\int_{-\infty}^{\infty} \big(\Per_s( \{u_+ \geq t\}, \Omega ) - \Per_s( \{\tilde u_+ \geq t\}, \Omega ) \big) \, dt
		\\
		&\; + \int_{-\infty}^{\infty} \big(\Per_s( \{u_- \geq t\}, \Omega ) - \Per_s( \{\tilde u_- \geq t\}, \Omega ) \big) \, dt
		.}
		Hence, we use Proposition~\ref{coarea} to obtain the desired conclusion.
\end{proof}

A useful consequence of Lemma~\ref{LAxS:2.5} is that, given an
$s$-minimal function,
its positive and negative parts are $s$-minimal functions as well.

\begin{lemma} \label{minpm}
If $u\in \W^s(\Omega)$ is an $s$-minimal function, then also $u_+, u_-$ are
$s$-minimal functions.
\end{lemma}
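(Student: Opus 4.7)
The plan is to deduce the $s$-minimality of $u_\pm$ directly from a competitor construction together with the energy-splitting of Lemma~\ref{LAxS:2.5}. First, one checks that $u_\pm\in\W^s(\Omega)$: the pointwise bounds $|u_\pm|\le|u|$ and $|u_\pm(x)-u_\pm(y)|\le|u(x)-u(y)|$ give both $L^1$-integrability and finiteness of the seminorm. Now, given any competitor $v\in\W^s(\Omega)$ for $u_+$ (i.e.\ $v=u_+$ a.e.\ in $\Co\Omega$), I would set $w:=v+u_-$. Since $w=u_+ + u_-=u$ on $\Co\Omega$, the function $w$ is an admissible competitor for $u$; the $s$-minimality of $u$ (via Lemma~\ref{TGAet}), combined with the splitting of Lemma~\ref{LAxS:2.5}, therefore yields
\[ \tilde\G(u_+,\Omega)+\tilde\G(u_-,\Omega)=\tilde\G(u,\Omega)\le\tilde\G(w,\Omega). \]

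The crux is then the \textbf{subadditivity estimate}
\[ \tilde\G(w,\Omega)\le\tilde\G(v,\Omega)+\tilde\G(u_-,\Omega), \]
which, chained with the previous inequality, gives $\tilde\G(u_+,\Omega)\le\tilde\G(v,\Omega)$ and hence the $s$-minimality of $u_+$ by Lemma~\ref{TGAet}. To prove the subadditivity, note that $\tilde w=\tilde v+\widetilde{u_-}$ (both sides vanish on $\Omega$ and agree with $u$ on $\Co\Omega$). Writing
\[ a:=v(x)-v(y),\quad b:=u_-(x)-u_-(y),\quad \alpha:=\tilde v(x)-\tilde v(y),\quad \beta:=\widetilde{u_-}(x)-\widetilde{u_-}(y), \]
the integrand of $2\bigl(\tilde\G(w,\Omega)-\tilde\G(v,\Omega)-\tilde\G(u_-,\Omega)\bigr)$ equals
\[ \bigl(|a+b|-|a|-|b|\bigr)+\bigl(|\alpha|+|\beta|-|\alpha+\beta|\bigr). \]
The first bracket is nonpositive by the triangle inequality. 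For the second: on $\Omega\times\Omega$ it vanishes because $\alpha=\beta=0$ there; on $\Omega\times\Co\Omega$ (and symmetrically on $\Co\Omega\times\Omega$), the condition $v=u_+$ on $\Co\Omega$ forces $\alpha=-u_+(y)$ and $\beta=-u_-(y)$, so $\alpha+\beta=-u(y)$, and the disjoint-sign identity $u_+(y)+|u_-(y)|=|u(y)|$ delivers $|\alpha|+|\beta|=|\alpha+\beta|$.

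For the negative part, the cleanest route is to observe that $-u$ is also $s$-minimal (the defining inequality~\eqref{GH2} is invariant under $u\mapsto-u$, since $|(-u)(x)-(-u)(y)|=|u(x)-u(y)|$); then $(-u)_+=-u_-$ is $s$-minimal by the positive-part case applied to $-u$, and hence $u_-$ is $s$-minimal by the same sign-flip invariance.

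\textbf{Main obstacle.} I expect the delicate point to be the exact cancellation $|\alpha|+|\beta|=|\alpha+\beta|$ on the off-diagonal part of $Q(\Omega)$. This is not a generic triangle-inequality fact; it hinges on the conjunction of the boundary condition $v=u_+$ on $\Co\Omega$ (which pins down $\alpha$) and the disjoint-support structure $u_+\cdot u_-\equiv 0$ (which forces $\alpha$ and $\beta$ to have pointwise compatible signs, never opposing ones). Once this identity is recognised, the rest of the argument is a straightforward chaining of the triangle inequality with the $s$-minimality of $u$.
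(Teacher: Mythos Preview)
Your proof is correct and follows essentially the same route as the paper. Your competitor $w=v+u_-$ is precisely the paper's $u+\psi$ (with $\psi:=v-u_+$ vanishing on $\Co\Omega$), and your ``subadditivity estimate'' $\tilde\G(w,\Omega)\le\tilde\G(v,\Omega)+\tilde\G(u_-,\Omega)$ is exactly the first inequality in~\eqref{gg}; both proofs hinge on the triangle inequality together with the disjoint-support identity $|u_+(y)|+|u_-(y)|=|u(y)|$ on $\Co\Omega$. The only cosmetic differences are that you manipulate the $\tilde\G$-integrand directly rather than passing through the split~\eqref{spliten}, and that you obtain the $u_-$ case via the sign-flip $u\mapsto -u$ instead of repeating the symmetric argument.
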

\begin{proof}
Let $\psi \in \W^s(\Omega)$ such that $\psi=0$ in $\Co \Omega$. Since $u$ is an
$s$-minimal function, we have that
	\[
	 \tilde \G(u,\Omega) \leq \tilde \G (u+\psi,\Omega).
	 \]
We claim that 
\eqlab{
	\label{gg} &
	\tilde\G (u+\psi,\Omega) \leq \tilde \G (u_++\psi,\Omega) + \tilde \G (u_-,\Omega)\quad {\mbox{and }}\\
&\tilde\G (u+\psi,\Omega) \leq \tilde \G (u_+,\Omega) + \tilde \G (u_- +\psi,\Omega).
	}
Once this is done, using the first inequality in~\eqref{gg}
and Lemma~\ref{LAxS:2.5}, we obtain that
	\bgs{ 
	\tilde \G (u_+,\Omega)+\tilde \G (u_-,\Omega) = \tilde \G(u,\Omega) \leq \tilde \G (u+\psi,\Omega) \leq \tilde \G (u_++\psi,\Omega) + \tilde \G (u_-,\Omega),
	}
	so $u_+$ is an $s$-minimal function. 
Similarly, one can prove that~$u_-$
is an $s$-minimal function by using
the second inequality in~\eqref{gg}
and Lemma~\ref{LAxS:2.5}.
\\
Hence,  to complete the proof
of Lemma~\ref{minpm},
it remains to establish~\eqref{gg}. For this, we focus on the proof
of the first inequality, since the second one is similar.
	To this end, we split $\tilde \G$ into the two contributions, as in~\eqref{spliten}. Using the triangle inequality, we have that
	\begin{equation}\label{tgbs834t48rug}\begin{split}
	\int_\Omega \int_\Omega \frac{|(u+\psi)(x)-(u+\psi)(y)|}{|x-y|^{n+s}} \, dx \, dy 	
		\leq &\; \int_\Omega \int_\Omega \frac{|(u_++\psi)(x)-(u_++\psi)(y)|}{|x-y|^{n+s}} \, dx \, dy
	\\
	&\;  +\int_\Omega \int_\Omega \frac{|u_-(x)-u_-(y)|}{|x-y|^{n+s}} \, dx \, dy .
\end{split}\end{equation}
	On the other hand, by~\eqref{pm} and the fact that $\psi=0$ in $\Co \Omega$,
we see that
	\begin{equation}\label{tgbs834t48rug2}\begin{split}
	&\int_{\Omega}  \left( \int_{\Co \Omega} \frac{|(u+\psi)(x)-(u+\psi)(y)| -|(u+\psi)(y)|}{|x-y|^{n+s}}  \, dy\right) dx
	\\
	\leq &\;
	\int_{\Omega}  \left( \int_{\Co \Omega} \frac{|(u_++\psi)(x)-(u_++\psi)(y)| + |u_-(x)-u_-(y)|  -|u_+(y)|-|u_-(y)|}{|x-y|^{n+s}}  \, dy\right) dx.
\end{split}\end{equation}
As a consequence of~\eqref{tgbs834t48rug} and~\eqref{tgbs834t48rug2},
and recalling~\eqref{spliten}, we obtain the first inequality in~\eqref{gg}.
The proof of Lemma~\ref{minpm} is thereby complete.
\end{proof}

In our framework, a useful byproduct of Lemma~\ref{minpm} consists in the possibility
of ``cutting'' and ``rescaling'' an $s$-minimal function near to a given value
in order to emphasise the role played by its level sets. The precise result
that we need is the following:

\begin{lemma}\label{lemma19}
Let $u\in \W^s(\Omega)$ be an
$s$-minimal function in~$\Omega$. Then
\begin{equation} \label{phieps}
	 \varphi_{\lambda,\eps} := \frac{ \min\left\{ \eps, \max\left\{ u-\lambda+ \sqrt{\eps},0\right\} \right\}}{\eps}
\end{equation}
	is an $s$-minimal function in $\Omega$, for any $\lambda\in \R$ and~$\eps>0$.
\end{lemma}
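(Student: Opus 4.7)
The plan is to realise $\varphi_{\lambda,\eps}$ as $u$ composed with a piecewise-affine ramp function, and to obtain it from $u$ through a chain of elementary operations each of which preserves $s$-minimality: constant translation, sign change, positive rescaling, and taking the positive part.

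First, I would record three elementary invariances that follow at once from Definition~\ref{defn11}. For any $a>0$, $b\in\R$ and $\sigma\in\{-1,+1\}$, if $u\in\W^s(\Omega)$ is $s$-minimal in $\Omega$, then so is $\sigma a u+b$. Indeed, $|(\sigma a u+b)(x)-(\sigma a u+b)(y)|=a\,|u(x)-u(y)|$, and the correspondence $v\leftrightarrow (v-b)/(\sigma a)$ is a bijection between the admissible competitors of $\sigma a u+b$ and those of $u$ in the sense of~\eqref{GH2}; since $a>0$, the defining inequality passes from $u$ to $\sigma a u+b$. Combined with Lemma~\ref{minpm}, this provides four moves that leave the class of $s$-minimal functions invariant.

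Now I would chain them. Set $w_1:=u-\lambda+\sqrt{\eps}$ ($s$-minimal by translation) and $w_2:=(w_1)_+=\max\{u-\lambda+\sqrt{\eps},0\}$ ($s$-minimal by Lemma~\ref{minpm}). Using the identity $\min\{\eps,t\}=\eps-(\eps-t)_+$, the upper truncation follows from another round of the same three moves: $\eps-w_2$ is $s$-minimal (sign change and translation), $(\eps-w_2)_+$ is $s$-minimal (Lemma~\ref{minpm}), and finally $\eps-(\eps-w_2)_+=\min\{\eps,w_2\}$ is $s$-minimal (sign change and translation). A last positive scaling by $1/\eps$ produces $\varphi_{\lambda,\eps}$, which is therefore $s$-minimal in $\Omega$.

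Membership in $\W^s(\Omega)$ at every intermediate stage is routine: each operation is either addition of a constant on the bounded set $\Omega$ (preserving $L^1(\Omega)$), or a $1$-Lipschitz composition controlling both $L^1(\Omega)$ and the $W^{s,1}$-seminorm by those of $u$. The main (minor) obstacle is the bookkeeping of the invariances in the first step; once these are recorded, the rest is a short mechanical assembly of translations, sign flips, and two applications of Lemma~\ref{minpm}.
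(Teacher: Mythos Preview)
Your proof is correct and follows essentially the same approach as the paper: both realise $\varphi_{\lambda,\eps}$ as a finite composition of translations, scalar multiplications, and positive/negative-part truncations applied to $u$, invoking the affine invariances of Definition~\ref{defn11} together with Lemma~\ref{minpm} at each truncation step. The only cosmetic difference is that the paper uses the identity $\varphi_{\lambda,\eps}=\eps^{-1}\big([(u-\lambda+\sqrt{\eps})_+-\eps]_-+\eps\big)$ (one $(\cdot)_+$ and one $(\cdot)_-$), whereas you use $\min\{\eps,t\}=\eps-(\eps-t)_+$ (two $(\cdot)_+$'s with sign changes); both routes apply Lemma~\ref{minpm} twice and are logically equivalent.
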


\begin{proof}
 Fixing any $\lambda\in \R$ and $\eps>0$,  we observe that
\sys[ \varphi_{\lambda,\eps} =]{
	&1 & {\mbox{ if }}  & u\in[\lambda-\sqrt\eps+\eps,\,+\infty),
	\\ &\displaystyle\frac{u-\lambda+\sqrt\eps}{\eps}
	& {\mbox{ if }} &u\in(\lambda-\sqrt\eps,\,\lambda-\sqrt\eps+\eps),
		\\ &0 & {\mbox{ if }} &u\in(-\infty,\,\lambda-\sqrt\eps]
	}
and we notice that
  \begin{equation}\label{67t2uwsisir} 0\leq \varphi_{\lambda,\eps} \leq 1.\end{equation}
  First of all, we prove that any translation of an $s$-minimal function
is still an $s$-minimal function.
For this, let $v_\lambda \in \W^s(\Omega)$ be such
that $v_\lambda (x)= u(x)-\lambda $ for almost every $x \in \Co \Omega$,
and let $v(x) := v_\lambda (x) +\lambda$ (hence, $u=v$ almost everywhere in $\Co \Omega$). Then, since $u$ is an
$s$-minimal function, we have that  
  \bgs{
	&\iint_{Q(\Omega)} \big(|(u-\lambda)(x) -(u-\lambda)(y)|-|v_\lambda(x)  - v_\lambda(y)|\big) 
	\frac{dx\, dy}{|x-y|^{n+s}} 
	\\
	=&\;
		\iint_{Q(\Omega)} \big(|u(x) -u(y)|-|v(x)  - v(y)|\big) 
	\frac{dx\, dy}{|x-y|^{n+s}}  \leq 0,
  }
thus proving that~$u-\lambda$ is an $s$-minimal function in~$\Omega$.\\
  In a similar way, one proves that if $u$ is an
$s$-minimal function, then $M u$ is an
$s$-minimal function for any $M\in \R.$ 
  We notice that
	\[ 
	 \varphi_{\lambda,\eps}(x) = \frac{\left[\left(u-\lambda+  \sqrt{\eps}\right)_+-\eps\right]_- +\eps}{\eps}
	 ,\]
	 so by Lemma~\ref{minpm} and the previous considerations, we obtain the desired result.
\end{proof}

In our setting, the crucial geometric property of
the cutoff~$\varphi_{\eps, \lambda}$ defined in~\eqref{phieps}
is that of approximating the level sets of an
$s$-minimal function~$u$
(this geometric property will then be complemented with a ``variational
stability'' in the forthcoming Proposition~\ref{prop110}).
For the sake of clarity, we explicitly state the cutoff approximation
in the next result.

\begin{lemma}\label{lemma112}
Let $u\in \W^s(\Omega)$ be an
$s$-minimal function in~$\Omega$. We consider $\eps>0$, $\lambda \in \R$ and $\varphi_{\eps, \lambda}$ as defined in~\eqref{phieps}. Then 
	\[
	 \varphi_{\lambda,\eps} \longrightarrow  \chi_{\{u \geq \lambda\}}  
		 \qquad \mbox{ for } {\eps \to 0,} 
		 \qquad \mbox{ almost everywhere in} \quad \Rn.
	\]  
\end{lemma}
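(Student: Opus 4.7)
The plan is to perform a straightforward pointwise case analysis based on the piecewise description of $\varphi_{\lambda,\eps}$ already recorded at the beginning of the proof of Lemma~\ref{lemma19}. Since $u\in \W^s(\Omega)$ is defined almost everywhere in $\R^n$, it suffices to show that for each $x\in\R^n$ at which $u(x)$ is a well-defined real number, the sequence $\varphi_{\lambda,\eps}(x)$ converges to $\chi_{\{u\ge\lambda\}}(x)$ as $\eps\searrow 0$.

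I would split the argument into three cases according to the sign of $u(x)-\lambda$. If $u(x)>\lambda$, setting $\delta:=u(x)-\lambda>0$, one observes that $\sqrt{\eps}-\eps\to 0^+$, so for $\eps$ small enough one has $\sqrt{\eps}-\eps<\delta$, i.e. $u(x)-\lambda+\sqrt{\eps}>\eps$, placing $u(x)$ in the upper regime where $\varphi_{\lambda,\eps}(x)=1$; this matches $\chi_{\{u\ge\lambda\}}(x)=1$. If $u(x)<\lambda$, setting $\delta:=\lambda-u(x)>0$, one has $\sqrt{\eps}<\delta$ for $\eps$ small, hence $u(x)-\lambda+\sqrt{\eps}<0$, so $\varphi_{\lambda,\eps}(x)=0=\chi_{\{u\ge\lambda\}}(x)$. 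If $u(x)=\lambda$, then $u(x)-\lambda+\sqrt{\eps}=\sqrt{\eps}$, and for $\eps\in(0,1]$ we have $\sqrt{\eps}\ge\eps$, so $\varphi_{\lambda,\eps}(x)=\min\{\eps,\sqrt{\eps}\}/\eps=1$, which again equals $\chi_{\{u\ge\lambda\}}(x)=1$.

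Putting the three cases together yields the desired pointwise (hence almost everywhere) convergence on $\R^n$. There is no real obstacle here: the lemma is essentially a sanity check on the construction~\eqref{phieps}, reassuring the reader that the ``cutoff'' functions $\varphi_{\lambda,\eps}$ really do approximate $\chi_{\{u\ge\lambda\}}$, and the only mild subtlety is verifying the case $u(x)=\lambda$, where one must exploit that $\sqrt{\eps}\ge\eps$ for $\eps\in(0,1]$ in order to fall inside the ``saturated'' branch of the piecewise definition rather than the linear one.
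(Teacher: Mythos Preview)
Your proof is correct and follows essentially the same pointwise case analysis as the paper. The only cosmetic difference is that the paper merges your cases $u(x)>\lambda$ and $u(x)=\lambda$ into the single case $u(x)\ge\lambda$, observing directly that then $\varphi_{\lambda,\eps}(x)=\min\{1,(u(x)-\lambda+\sqrt{\eps})/\eps\}\ge\min\{1,\eps^{-1/2}\}=1$ for $\eps\le1$; this is slightly slicker but not materially different from what you do. (A minor slip: your ``i.e.'' in the case $u(x)>\lambda$ is not literally an equivalence---the inequality $u(x)-\lambda+\sqrt{\eps}>\eps$ rearranges to $\delta>\eps-\sqrt{\eps}$, not to $\sqrt{\eps}-\eps<\delta$---but since $\eps-\sqrt{\eps}<0<\delta$ for $\eps\in(0,1)$ the conclusion is unaffected.)
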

\begin{proof}
Notice that when $\chi_{\{u \geq\lambda\}}(x)=0$, then $\varphi_{\lambda, \eps} (x) =0$
for $\eps$ small enough. On the other hand, when $\chi_{\{u\geq \lambda\}}(x)=1$, then
	\[ 
		\varphi_{\lambda, \eps}(x) 
		= \min\left\{ 1, \frac{u(x)-\lambda + 	\sqrt{\eps} }{\eps} \right\} 
		\geq \min \left\{ 1,  \eps^{-\frac12} \right\}  =1,
	\]
	 for $\eps$ small enough. The conclusion follows from these observations.
\end{proof}

Now we focus our attention on the relation between the
properties of a given function and those of its level sets. In particular,
we show in the next lemma that if a function belongs to~$\W^s(\Omega)$, then almost every level set
has finite fractional perimeter.

\begin{lemma}
If $u\in \W^s(\Omega)$, then for almost any $\lambda \in \R$ 
\eqlab{
	\label{fin}
	 \Per_s( \{ u \geq \lambda\}, \Omega ) < +\infty.
	 }
\end{lemma}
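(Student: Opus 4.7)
The plan is to decompose the fractional perimeter as $\Per_s=\Per_s^L+\Per_s^{NL}$ in the spirit of~\eqref{PLa} and to bound each piece separately. The local contribution is immediate: since $u\in\W^s(\Omega)$ forces $u|_\Omega\in W^{s,1}(\Omega)$, the co-area identity~\eqref{hhh} in Proposition~\ref{coarea} gives
\[
\int_{-\infty}^{\infty}\Per_s^L(\{u\ge t\},\Omega)\,dt=\tfrac12\,[u]_{W^{s,1}(\Omega)}<+\infty,
\]
hence $\Per_s^L(\{u\ge\lambda\},\Omega)<+\infty$ for almost every $\lambda\in\R$.

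For the nonlocal part I would reduce to the bounded case by a truncation argument. For every $M>0$, set $u_M:=\max\{\min\{u,M\},-M\}$ and notice that $\{u\ge\lambda\}=\{u_M\ge\lambda\}$ whenever $\lambda\in(-M,M)$, so it is enough to control $\Per_s^{NL}(\{u_M\ge\lambda\},\Omega)$ for almost every such $\lambda$. Since $u_M$ is bounded and $\Omega$ is open, bounded, and Lipschitz, a simple integration together with the standard estimate $\int_{\Co\Omega}|x-y|^{-n-s}\,dy\le C\,\dist(x,\partial\Omega)^{-s}$ (integrable on $\Omega$ because $s\in(0,1)$) shows that $\mathcal T_s(u_M,\Omega)<+\infty$. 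Moreover $u_M\in W^{s,1}(\Omega)$ because truncations do not increase the Gagliardo seminorm, and the triangle inequality combined with the fractional Hardy bound~\eqref{frch} and the finiteness of $\mathcal T_s(u_M,\Omega)$ then gives $\G(u_M,\Omega)<+\infty$. At this point Remark~\ref{coareag} applies to $u_M$ and formula~\eqref{CoA con G} yields
\[
\int_{-\infty}^{\infty}\Per_s(\{u_M\ge t\},\Omega)\,dt=\G(u_M,\Omega)<+\infty,
\]
so $\Per_s(\{u_M\ge\lambda\},\Omega)<+\infty$ for almost every $\lambda\in\R$, and in particular for almost every $\lambda\in(-M,M)$. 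Letting $M$ run along a sequence $M_k\nearrow+\infty$ and discarding the countable union of the resulting exceptional sets, we obtain $\Per_s(\{u\ge\lambda\},\Omega)<+\infty$ for almost every $\lambda\in\R$.

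The only delicate point is the finiteness of $\mathcal T_s(u_M,\Omega)$, which is precisely what the truncation is designed to deliver: the bare assumption $u\in\W^s(\Omega)$ affords no control on the global tail $\mathcal T_s(u,\Omega)$, so~\eqref{CoA con G} cannot be invoked directly on $u$, whereas replacing $u$ by $u_M$ kills the tail while preserving all level sets within the window $(-M,M)$. Exhausting $\R$ by such windows yields the claim.
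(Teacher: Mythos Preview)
Your argument is correct, but it is considerably more elaborate than necessary. The local part is handled exactly as in the paper, via the co-area identity~\eqref{hhh}. For the nonlocal part, however, the paper bypasses truncation entirely by observing the trivial pointwise bound $|\chi_E(x)-\chi_E(y)|\le 1$, which gives
\[
\Per_s^{NL}(\{u\ge\lambda\},\Omega)\le \int_\Omega\int_{\Co\Omega}\frac{dx\,dy}{|x-y|^{n+s}}\le 2\Per_s(\Omega,\R^n)<+\infty
\]
for \emph{every} $\lambda\in\R$, not merely almost every $\lambda$. This is finite precisely because $\Omega$ is bounded with Lipschitz boundary --- the same integrability of $\dist(\cdot,\partial\Omega)^{-s}$ that you invoke to control $\mathcal T_s(u_M,\Omega)$. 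In effect, you already have the key ingredient in hand but route it through the machinery of $\mathcal T_s$, the fractional Hardy inequality, and the full co-area formula~\eqref{CoA con G}, when a one-line uniform bound suffices. Your detour does work and is internally consistent, but it buys nothing extra here; the direct estimate is both shorter and yields a stronger conclusion (finiteness of $\Per_s^{NL}$ for all $\lambda$, with the ``almost every'' coming solely from the local piece).
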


\begin{proof}
	If $u\in \W^s(\Omega)$, then, by~\eqref{hhh},
	\[\int_{-\infty}^\infty \Per^L_s ( \{u \geq \lambda\} , \Omega) \, d \lambda   =  \frac12 \int_{\Omega} \int_{\Omega} \frac{|u(x)-u(y)|}{|x-y|^{n+s} } \, dx \, dy < +\infty,\]
	hence 
	\[ \Per^L_s ( \{u \geq \lambda\} , \Omega) <+\infty\]
	for almost any $\lambda \in \R$.
	Moreover
	\[ \Per_s^{NL}  ( \{u \geq \lambda\} , \Omega)  \leq 2 \Per_s(\Omega, \R^n) <+\infty,\]
	for every $\lambda \in \R$. The conclusion is established.
\end{proof}

Now we observe that the $s$-minimisation property for functions
is stable under convergence,
provided that the difference of the ``global tails'' given in~\eqref{coda} is also infinitesimal. The precise result
is the content of the next proposition.

\begin{proposition}\label{prop110}
Let $u_k\in \W^s(\Omega)$ be $s$-minimal functions such that
\begin{equation}\label{IM0}
		\sup_{k\in\N}
		\mathcal T_s (u_k,\Omega) <+\infty,
\end{equation}
and let $u\colon \Rn \to \R$ be such that
	\eqlab{\label{eq11} 
	u_k  \longrightarrow u \;  \mbox{ in }\;  L^1(\Omega), \qquad \mbox{ and } \qquad \mathcal T_s(u_k-u, \Omega) \to 0 \; \mbox{ for } \; k \to+\infty.
	}
Then
\begin{equation}\label{IM1}
u_k \longrightarrow u\; \mbox { for }\;  k \to+\infty \quad  \mbox{ in } \; L^1_{{\loc}}(\Rn)
\end{equation}
and
$u$ is an $s$-minimal function.
\end{proposition}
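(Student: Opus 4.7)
The main obstacle in this proposition is that the integrand of the variational inequality of Definition~\ref{defn11} has no definite sign on the cross region $\Omega\times\Co\Omega$, so Fatou's lemma cannot be applied directly to it. The plan is to bypass this by working with the functional $\G$ of~\eqref{GAG}, whose integrand is nonnegative; the equivalence of the two notions is then furnished by Lemma~\ref{TGAet}, which requires finite tails. This is precisely where hypothesis~\eqref{IM0} enters, and part of the work consists in transferring the finiteness of the tail from the $u_k$ to $u$.

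For~\eqref{IM1} and the tail finiteness of $u$, pick any ball $B_R$ with $\overline{\Omega}\subset B_R$. For every $y\in B_R\setminus\Omega$ one has $\int_\Omega|x-y|^{-(n+s)}\,dx\ge c_R>0$, so
\[
\int_{B_R\setminus\Omega}|u_k-u|\,dy\le c_R^{-1}\,\mathcal T_s(u_k-u,\Omega)\longrightarrow 0,
\]
which combined with~\eqref{eq11} yields~\eqref{IM1} and, along a subsequence, $u_k\to u$ pointwise a.e.\ in $\Rn$. The elementary bound $\big||u_k(y)|-|u(y)|\big|\le|u_k(y)-u(y)|$ plugged into $\mathcal T_s$, together with~\eqref{IM0}, then delivers $\mathcal T_s(u,\Omega)\le\sup_k\mathcal T_s(u_k,\Omega)+\mathcal T_s(u_k-u,\Omega)<+\infty$.

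For the $s$-minimality of $u$, I would first test each $u_k$ against its own truncation $\tilde u_k$, which lies in $\W^s(\Omega)$ and coincides with $u_k$ outside $\Omega$; this gives $\tilde\G(u_k,\Omega)\le\tilde\G(\tilde u_k,\Omega)=0$, and in view of~\eqref{GAjgisq4} yields $\G(u_k,\Omega)\le\mathcal T_s(u_k,\Omega)$, a bound uniform in $k$ by~\eqref{IM0}. Fatou's lemma applied along the a.e.\ convergent subsequence to the nonnegative integrand of $\G$ then produces $\G(u,\Omega)\le\liminf_k\G(u_k,\Omega)<+\infty$, so in particular $u\in\W^s(\Omega)$. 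Given now an arbitrary competitor $v\in\W^s(\Omega)$ with $v=u$ in $\Co\Omega$, I would set
\[
v_k:=v\,\chi_\Omega+u_k\,\chi_{\Co\Omega},
\]
so that $v_k\in\W^s(\Omega)$, $v_k=u_k$ in $\Co\Omega$, and $v_k-v$ is supported in $\Co\Omega$ where it agrees with $u_k-u$. Since $\mathcal T_s(u_k,\Omega)<+\infty$, Lemma~\ref{TGAet} grants $\G(u_k,\Omega)\le\G(v_k,\Omega)$. The reverse triangle inequality applied to the integrand of $\G(v_k,\Omega)-\G(v,\Omega)$ bounds this difference in absolute value by $\mathcal T_s(u_k-u,\Omega)\to 0$; chaining the inequalities,
\[
\G(u,\Omega)\le\liminf_k\G(u_k,\Omega)\le\lim_k\G(v_k,\Omega)=\G(v,\Omega).
\]
Since $\mathcal T_s(u,\Omega)<+\infty$, a second application of Lemma~\ref{TGAet} certifies $u$ as an $s$-minimal function.
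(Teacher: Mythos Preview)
Your proof is correct and follows essentially the same route as the paper: both establish $L^1_{\loc}$ convergence from the tail hypothesis, obtain the uniform bound $\G(u_k,\Omega)\le\mathcal T_s(u_k,\Omega)$ by testing against $\tilde u_k$, pass to the limit via Fatou on the nonnegative integrand of $\G$, and close the argument with the modified competitor $v_k$ and the estimate $|\G(v_k,\Omega)-\G(v,\Omega)|\le\mathcal T_s(u_k-u,\Omega)$. The only cosmetic differences are the order of the steps and your being more explicit about extracting an a.e.\ convergent subsequence before invoking Fatou.
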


\begin{proof} First, we claim that
$u\in \W^s(\Omega).$
To this end, we observe that,
by Fatou's Lemma,
\begin{equation}\label{IM8}
 		[u]_{W^{s,1}(\Omega)} \leq \liminf_{k \to+\infty} [u_k]_{W^{s,1}(\Omega)}.
\end{equation}
Moreover, since $u_k$ is an
$s$-minimal function with bounded ``global tail''
as stated in~\eqref{IM0}, from Remark~\ref{TGAet}
we have that $u_k$ is a minimiser for $\G$. Hence, being $\tilde u_k$ a competitor for $u_k$, we have that
\bgs{
	\;\frac12[u_k]_{W^{s,1}(\Omega)} \leq \G(u_k,\Omega)\leq \G(\tilde u_k,\Omega) =\mathcal T_s(u_k,\Omega),
	}
	by definition of $\tilde u_k$. 
		Using this,~\eqref{IM0} and~\eqref{IM8}, we conclude that
$$ [u]_{W^{s,1}(\Omega)} \leq \sup_{k\in\N} [u_k]_{W^{s,1}(\Omega)}\le
2
\sup_{k\in\N}\mathcal T_s (u_k,\Omega)<+\infty,$$
thus proving that $u\in \W^s(\Omega)$. 
\\
Now we prove~\eqref{IM1}. For this, given any compact set  $K\subset \Rn$,
we have that
	\bgs{
		 \mathcal T_s (u_k-u,\Omega) \geq 	
	\int_{\Omega}\left(\int_{K\setminus \Omega} \frac{|u_k(y)-u(y)|}{|x-y|^{n+s}} \, dy \right) dx 
	\geq C |\Omega| \int_{K \setminus \Omega}|u_k(y)-u(y)| \, dy,
	}
	where $C>0$ depends on $\Omega$ and $K$.
	This, together with~\eqref{eq11}, gives the assertion in~\eqref{IM1}.
\\
To complete the proof of Proposition~\ref{prop110},
it remains to establish
that $u$ is an $s$-minimal function.
To this end, we observe that~$\mathcal T_s(u,\Omega)<+\infty$, 
thanks to~\eqref{IM0} and~\eqref{eq11}.
Hence,
recalling that $u\in \W^s(\Omega)$,
by Lemma~\ref{TGAet},
we know that $u$ is an $s$-minimal function if and only if it minimises $\G$. 
Consequently, we have that
the proof of Proposition~\ref{prop110} is complete
if we show that
\begin{equation}\label{TGV}
\G(u,\Omega) \le \G(v,\Omega)
\end{equation}
for any~$v\in \W^s(\Omega)$ such that $v=u$ in $\Co \Omega$.
To prove~\eqref{TGV}, we let
	\sys[v_k:=]{ &v & \mbox{ in } &\Omega,\\
					&u_k & \mbox{ in } &\Co \Omega.
					}
Then, the fact that $u_k$ is an
$s$-minimal function, together with Lemma~\ref{TGAet},
yields that
$$ \G(u_k,\Omega) \le \G(v_k,\Omega).$$
As a result, by Fatou's Lemma,
we have that
\begin{equation}\begin{split} \label{BNs83}
		\G(u,\Omega) -\G(v,\Omega) \leq &\; 
			\liminf_{k \to+\infty}  \G (u_k,\Omega)  -\G(v,\Omega) 
			\\
			\leq&\; \liminf_{k \to+\infty}  \big(
\G (u_k,\Omega) - \G(v_k,\Omega) \big)
			+ \limsup_{k \to+\infty}\G(v_k,\Omega) - \G(v,\Omega) 
			\\
			\leq &\; \limsup_{k \to+\infty}\big( \G(v_k,\Omega) - \G(v,\Omega) \big).
\end{split}\end{equation}	
	Now
	\bgs{
		\left| \G(v_k,\Omega) -\G(v,\Omega)\right|\leq &\;
			\int_{\Omega} \left( \int_{\Co \Omega} \frac{\big||v(x)-u_k(y)| - |v(x)-u(y)|\big|}{|x-y|^{n+s}} dy \right) dx
		\\
		\leq &\; \int_{\Omega} \left( \int_{\Co \Omega} \frac{|u(y)-u_k(y)| }{|x-y|^{n+s}} dy \right) dx\\
=&\mathcal T_s(u_k-u, \Omega).
		}			
Therefore, in light of~\eqref{eq11},
		\[ \lim_{k \to+\infty}\big( \G(v_k,\Omega) -\G(v,\Omega) \big)=0.\] 
This and~\eqref{BNs83} yield~\eqref{TGV}, as desired.
\end{proof}

In the next lemma, we give a sufficient condition for the convergence of the ``global tails''. 
\begin{lemma}\label{lemma111}
Let $u_k,u$ be such that
	\begin{equation}\label{con M}
		\|u_k\|_{L^\infty(\Rn)} \leq M
	\end{equation}
	for some $M>0$ and
\begin{equation}\label{con M2}
	 	u_k \longrightarrow u \qquad \mbox{  in }\;  L^1_{\loc}(\Rn) \qquad \mbox{ as }
k\to+\infty.
\end{equation}
	 Then 
\begin{equation*}\label{con M3} 
	 \mathcal T_s(u_k-u, \Omega) \to 0 \quad \mbox{ for } \quad  k \to+\infty.
\end{equation*}
\end{lemma}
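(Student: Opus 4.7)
The plan is to split the inner integral in $\mathcal T_s(u_k-u,\Omega)$ into a bounded ring $\Omega_R\setminus\Omega$ near the boundary and a far tail $\R^n\setminus\Omega_R$, and then run a standard three-parameter $\eps/3$ argument. As a preliminary reduction, I note that \eqref{con M2} combined with \eqref{con M} gives, via an a.e. convergent subsequence on every ball, that $\|u\|_{L^\infty(\R^n)}\leq M$, so $|u_k-u|\leq 2M$ almost everywhere in $\R^n$.

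For the far tail, if $x\in\Omega$ and $y\in\R^n\setminus\Omega_R$ then $|x-y|\geq\dist(y,\Omega)\geq R$, whence
\[
\int_\Omega\int_{\R^n\setminus\Omega_R}\frac{|u_k(y)-u(y)|}{|x-y|^{n+s}}\,dy\,dx
\leq 2M|\Omega|\int_{\R^n\setminus B_R}\frac{dz}{|z|^{n+s}}
=\frac{CM|\Omega|}{R^s},
\]
which I render smaller than $\eps/3$ by fixing $R$ large, uniformly in $k$.

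For the near part, Fubini converts the remaining contribution to $\int_{\Omega_R\setminus\Omega}|u_k(y)-u(y)|\,K(y)\,dy$ with $K(y):=\int_\Omega|x-y|^{-n-s}\,dx$. Using $\Omega\subset\R^n\setminus B_{\dist(y,\Omega)}(y)$ and a polar computation one gets $K(y)\leq C\dist(y,\Omega)^{-s}$. I then split $\Omega_R\setminus\Omega=(\Omega_\delta\setminus\Omega)\cup(\Omega_R\setminus\Omega_\delta)$ for a small $\delta>0$. On the thin strip $\Omega_\delta\setminus\Omega$ I bound $|u_k-u|\leq 2M$ and invoke the Lipschitz regularity of $\partial\Omega$, which yields $|\{0<\dist(\cdot,\partial\Omega)\leq t\}|\leq Ct$ for small $t$, so
\[
\int_{\Omega_\delta\setminus\Omega}\dist(y,\Omega)^{-s}\,dy
\leq C\int_0^\delta t^{-s}\,dt=C'\delta^{1-s},
\]
which is smaller than $\eps/(3\cdot 2MC)$ for $\delta$ small (this is where $s<1$ is used); the estimate is independent of $k$. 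On the remaining set $\Omega_R\setminus\Omega_\delta$ the weight $K$ is bounded by $C\delta^{-s}$, and the local $L^1$ convergence \eqref{con M2} makes $\int_{\Omega_R\setminus\Omega_\delta}|u_k-u|\,dy\to 0$ as $k\to\infty$, providing the last $\eps/3$.

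The only mildly delicate point is the singularity of the kernel $K$ at $\partial\Omega$, which I control jointly by the uniform $L^\infty$ bound \eqref{con M}, the Lipschitz regularity of $\partial\Omega$, and the condition $s<1$; the rest of the argument is routine bookkeeping of the three parameters $R,\delta,k$.
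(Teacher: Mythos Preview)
Your proof is correct and follows essentially the same three-region decomposition as the paper: a far tail controlled by the $L^\infty$ bound and $R^{-s}$ decay, an intermediate annulus where the kernel is bounded and $L^1_{\loc}$ convergence applies, and a thin strip at $\partial\Omega$ where the uniform $L^\infty$ bound absorbs the kernel singularity. The only real difference lies in the treatment of the thin strip: the paper applies the coarea formula over the level surfaces $\partial\Omega_t$, uses the bound $\sup_{t<r_o}\mathcal H^{n-1}(\partial\Omega_t)<+\infty$, and invokes the Dominated Convergence Theorem, whereas you work with the volume estimate $|\Omega_\delta\setminus\Omega|\leq C\delta$ and an explicit $\eps/3$ choice of~$\delta$. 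Your variant is slightly more elementary---it avoids surface measures and the (mild) subsequence subtlety hidden in the paper's DCT step---but the two arguments are otherwise equivalent.
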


\begin{proof} In this argument,
we follow the proof in~\cite[Theorem 3.3]{nms}, see also~\cite[Theorem 1.11]{approxLuk}.
We observe that, recalling~\eqref{coda} and~\eqref{con M},
	\bgs{
		\mathcal T_s(u_k,\Omega) 
		\leq M \, \Per_s(\Omega, \Rn)<+\infty.
	 } 
In the same way, using~\eqref{con M} and~\eqref{con M2}, we obtain that~$\mathcal T_s(u,\Omega)<+\infty$. 
\\
Now, recalling the notation in~\eqref{Ot},
we have that, if $\Omega$ is
a  bounded  open set with Lipschitz boundary, then
there exists $r_o>0$ such that, for all $t<r_o$, the sets $\Omega_t$ are bounded open sets  with Lipschitz boundary as well, and furthermore
	 \begin{equation}\label{effin}
	 	\sup_{t \in (0,r_o)} \mathcal H^{n-1} (\partial \Omega_t) <+\infty.
	 \end{equation}
	 We consider any $R>r_o$. Then
	 \begin{equation}\begin{split}\label{665544}
	 	\mathcal T_s(u_k-u,\Omega)
	 	= &\; \int_{\Omega} \left( \int_{\Omega_{r_o} \setminus \Omega} \frac{|u_k(y)-u(y)|}{|x-y|^{n+s}} dy\right) dx 
	 	+\int_{\Omega} \left( \int_{ \Omega_R \setminus \Omega_{r_o} } \frac{|u_k(y)-u(y)|}{|x-y|^{n+s}} dy\right) dx
	 	\\
	 	&\; +\int_{\Omega} \left(\int_{ \Co  \Omega_{R} } \frac{|u_k(y)-u(y)|}{|x-y|^{n+s}} dy\right) dx
	 	\\=:&\; I^k_{r_o} + I^k_{R,r_o} + I^k_{R}. 
	  \end{split}\end{equation}
	  We now estimate separately the three terms in~\eqref{665544}.
For this, we first observe that
	  \bgs{
	  	& I^k_{R}\leq \int_{\Omega} \left(\int_{ \Co B_R(x)} \frac{|u_k(y)-u(y)|}{|x-y|^{n+s}} dy\right) dx \leq C_{n,s} M |\Omega| R^{-s} \qquad \mbox{ and}
	  	\\
	  	& I^k_{R,r_o} \leq r_o^{-n-s} |\Omega| \int_{\Omega_R \setminus \Omega_{r_o}} |u_k(y)-u(y)|\, dy,
	  }
being~$C_{n,s}$ a positive constant (possibly varying from line to line).
	  Therefore,
$$ \lim_{R \to+\infty}\lim_{k \to+\infty}I^k_{R}\le
\lim_{R \to+\infty}\lim_{k \to+\infty}C_{n,s} M |\Omega| R^{-s}=0,$$
and, in addition, recalling~\eqref{con M2},
$$ \lim_{k \to+\infty}
I^k_{R,r_o} \leq r_o^{-n-s} |\Omega| \,
\lim_{k \to+\infty}\int_{\Omega_R \setminus \Omega_{r_o}} |u_k(y)-u(y)|\, dy=0.$$
As a result,
	  \begin{equation}\label{32023033}
	  	\lim_{R \to+\infty} \left[\lim_{k \to+\infty} \left(I^k_{R,r_o} + I^k_{R}\right) \right]= 0.
	  \end{equation}
	  Moreover, we see that
	  \begin{equation*}\label{SHn9345655}\begin{split}
	  	I^k_{r_o}= &\;
	  	\int_{\Omega_{r_o} \setminus \Omega} |u_k(y)-u(y)| 
	  	\left( \int_{\Omega} \frac{dx}{|x-y|^{n+s}} \right) dy
	  	\\
	  	\leq &\; \int_0^{r_o} 
	  	\left[\int_{\partial \Omega_t} |u_k(y)-u(y)| 
	  	\left( \int_{\Co B_t(y)} \frac{dx}{|x-y|^{n+s}} \right) d\mathcal H^{n-1}(y) \right] dt
	  	\\
	  	\leq &\;C_{n,s}  \int_0^{r_o} 
	  	\left(\int_{\partial \Omega_t} |u_k(y)-u(y)| 
	  	 d\mathcal H^{n-1}(y) \right) \frac{dt}{t^s}. 
	  \end{split}\end{equation*}
Recalling~\eqref{con M} and~\eqref{effin},
we also remark that
	  \[ \frac{1}{t^s}\,
	  \int_{\partial \Omega_t} |u_k(y)-u(y)| 
	  	 d\mathcal H^{n-1}(y) 
	  	 \leq 
	  	 \frac{2M}{t^s}\,\sup_{t\in (0,r_o)} \mathcal H^{n-1}(\partial \Omega_t) ,
	   \]
and the latter function is integrable when~$t\in(0,r_o)$.
\\
Consequently, exploiting~\eqref{eq11}
and the Dominated Convergence Theorem, we get that
	 \[
	  	 \lim_{k \to+\infty} \int_0^{r_o} 
	  	\left(\int_{\partial \Omega_t} |u_k(y)-u(y)| 
	  	 d\mathcal H^{n-1}(y) \right) \frac{dt}{t^s}  =0.
	  \] 
This
implies that
	\[
	 	 \lim_{k\to+\infty}  I^k_{r_o} =0.
	 \] 
{F}rom this, \eqref{665544} and~\eqref{32023033},
we obtain the claim, and conclude the proof of the Proposition.
\end{proof}

With this preliminary work, we are
ready to show that level sets
of $s$-minimal functions are $s$-minimal sets. The precise statement
goes as follows:

\begin{theorem} \label{thm211}
If $u\in \W^s(\Omega)$ is an
$s$-minimal function in~$\Omega$, then, for all $\lambda \in \R$,
\begin{equation}\label{SJkmd-1}
{\mbox{$\chi_{\{ u \geq \lambda \}}$ is an $s$-minimal function.}} \end{equation}
In addition,  for every $\lambda \in \R$,
\begin{equation}\label{SJkmd-2}
{\mbox{the sets $\{  u \geq \lambda \}$ are $s$-minimal in $\Omega$.}}\end{equation}
\end{theorem}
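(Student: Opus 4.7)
The strategy is to realise $\chi_{\{u\ge\lambda\}}$ as the limit of the $s$-minimal cutoffs $\varphi_{\lambda,\eps}$ supplied by Lemma~\ref{lemma19}, and then to invoke the stability result of Proposition~\ref{prop110} along a sequence $\eps_k\searrow 0$ in order to transfer the $s$-minimality to the limit. Once the characteristic function $\chi_{\{u\ge\lambda\}}$ has been shown to be an $s$-minimal function, the $s$-minimality of the level set $\{u\ge\lambda\}$ will follow by restricting the class of competitors in Definition~\ref{defn11} to characteristic functions of sets.

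Fix $\lambda\in\R$ and a sequence $\eps_k\searrow 0$. By Lemma~\ref{lemma19} each $\varphi_{\lambda,\eps_k}$ is an $s$-minimal function in $\Omega$, and by~\eqref{67t2uwsisir} one has $0\le\varphi_{\lambda,\eps_k}\le 1$, which yields the uniform tail bound
\[
\mathcal T_s(\varphi_{\lambda,\eps_k},\Omega)\;\le\;\int_{\Omega}\int_{\Co\Omega}\frac{dy\,dx}{|x-y|^{n+s}}\;=\;\Per_s(\Omega,\R^n)\;<\;+\infty,
\]
so~\eqref{IM0} holds. By Lemma~\ref{lemma112}, $\varphi_{\lambda,\eps_k}\to\chi_{\{u\ge\lambda\}}$ almost everywhere in $\R^n$; combined with the pointwise bound by $1$, the Dominated Convergence Theorem gives convergence in $L^1_{\loc}(\R^n)$, and in particular in $L^1(\Omega)$. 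Lemma~\ref{lemma111}, applied with $M=1$ to the sequence $\varphi_{\lambda,\eps_k}$, then delivers $\mathcal T_s(\varphi_{\lambda,\eps_k}-\chi_{\{u\ge\lambda\}},\Omega)\to 0$, so that~\eqref{eq11} is in force. Proposition~\ref{prop110} is thus applicable and yields that $\chi_{\{u\ge\lambda\}}\in\W^s(\Omega)$ is an $s$-minimal function in $\Omega$, which establishes~\eqref{SJkmd-1}.

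To deduce~\eqref{SJkmd-2}, fix $\lambda\in\R$ and let $F\subset\R^n$ be any measurable set with $F\setminus\Omega=\{u\ge\lambda\}\setminus\Omega$. Then $v:=\chi_F$ coincides with $\chi_{\{u\ge\lambda\}}$ almost everywhere in $\Co\Omega$, and, by the $s$-minimality of $\chi_{\{u\ge\lambda\}}$ just proved, substituting it and $v$ into~\eqref{GH2} and recalling the definition~\eqref{PER} of the fractional perimeter gives $\Per_s(\{u\ge\lambda\},\Omega)\le\Per_s(F,\Omega)$. The finiteness $\Per_s(\{u\ge\lambda\},\Omega)<+\infty$ required by Definition~\ref{PERMI} is ensured since $\chi_{\{u\ge\lambda\}}\in\W^s(\Omega)$ controls $\Per_s^L(\{u\ge\lambda\},\Omega)=\tfrac12[\chi_{\{u\ge\lambda\}}]_{W^{s,1}(\Omega)}$, while the nonlocal contribution satisfies $\Per_s^{NL}(\{u\ge\lambda\},\Omega)\le 2\Per_s(\Omega,\R^n)<+\infty$. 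Hence $\{u\ge\lambda\}$ is $s$-minimal in $\Omega$ in the sense of Definition~\ref{PERMI}.

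The only delicate point I foresee is the verification of the tail-difference convergence needed to invoke Proposition~\ref{prop110}; however, this reduces to Lemma~\ref{lemma111} once one observes that the family $\{\varphi_{\lambda,\eps_k}\}$ is uniformly bounded by $1$ in $L^\infty(\R^n)$. All remaining ingredients are supplied by the preceding lemmas, so the argument above essentially amounts to assembling the approximation/compactness machinery developed earlier in order to pass from the smooth cutoffs $\varphi_{\lambda,\eps}$ to the characteristic function of the level set.
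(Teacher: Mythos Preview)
Your proof is correct and follows essentially the same approach as the paper: you approximate $\chi_{\{u\ge\lambda\}}$ by the $s$-minimal cutoffs $\varphi_{\lambda,\eps_k}$, verify the uniform tail bound and the tail-difference convergence via Lemmas~\ref{lemma112} and~\ref{lemma111}, and then invoke Proposition~\ref{prop110}. The only minor variation is in the finiteness check for $\Per_s(\{u\ge\lambda\},\Omega)$: the paper obtains it from a Fatou argument applied to $\G(\varphi_{\lambda,\eps},\Omega)$, whereas you deduce it more directly from $\chi_{\{u\ge\lambda\}}\in\W^s(\Omega)$ together with the trivial bound on $\Per_s^{NL}$---both routes are valid.
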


\begin{proof}
Let $\lambda\in \R$, $\eps>0$ and $ \varphi_{\lambda, \eps}$ be given in~\eqref{phieps}.
We consider an infinitesimal sequence~$\eps\to0$,
and we observe that, thanks to Lemma~\ref{lemma19},
\begin{equation*}\label{MEN1}
{\mbox{$\varphi_{\lambda,\eps}$ is an $s$-minimal function.}}
\end{equation*}
Moreover, in view of~\eqref{67t2uwsisir},
\begin{equation*}\label{MEN2}
\sup_{\eps\in(0,1)}\mathcal T_s (\varphi_{\lambda,\eps},\Omega) \le\Per_s(\Omega,\R^n)<+\infty.
\end{equation*}
In addition, by
Lemma~\ref{lemma112} and~\eqref{67t2uwsisir}, we have that
\begin{equation*}\label{MEN3}
{\mbox{$\varphi_{\lambda,\eps}\to \chi_{\{u\geq \lambda\}}$ in~$L^1(\Omega)$
\; \; as\; $\eps\to0$,}}\end{equation*}
and, by Lemma~\ref{lemma111}, we see that
\begin{equation*}\label{MEN4}
{\mbox{$\mathcal T_s(\varphi_{\lambda,\eps}-
\chi_{\{u\geq \lambda\}}, \Omega) \to 0$ \; \; as \; $\eps\to0$.}}\end{equation*}
Accordingly, 
the sequence~$\varphi_{\lambda,\eps}$
fulfills the 
hypotheses of Proposition~\ref{prop110}.
It therefore follows that $\chi_{\{u\geq \lambda\} }$ is an
$s$-minimal function, thus proving the desired result in~\eqref{SJkmd-1}.
\\
Now we prove~\eqref{SJkmd-2}.
To this end, we first point out that when $u$ is an
$s$-minimal function, then
\begin{equation}\label{ECsquest}
{\mbox{all level lets $\{ u\geq \lambda\}$ have finite $s$-perimeter in $\Omega$.}}\end{equation}
Indeed, in light of
Lemma~\ref{TGAet}, we know that~$\varphi_{\lambda, \eps} $ is a
minimiser for $\G$ in $\Omega$.  
Then, using the notations in Definition~\ref{defn11} we have that
	\begin{equation}\label{7tegudhfgewhv3irfg}
		\G(\varphi_{\lambda, \eps}, \Omega) \leq  \G (\tilde \varphi_{\lambda, \eps}, \Omega) \leq 2 \Per_s(\Omega, \Rn)
.	\end{equation}
	 Moreover, using Fatou's Lemma (and recalling~\eqref{PER}), we have that
	\bgs{
	\Per_s\left( \left\{ u\geq \lambda\right\} , \Omega\right)  =  \G(\chi_{\{u\geq \lambda\}}, \Omega)
	 \leq \liminf_{\eps \to 0} \G(\varphi_{\lambda, \eps}, \Omega).
	}
{F}rom this and~\eqref{7tegudhfgewhv3irfg} we obtain~\eqref{ECsquest},
as desired.\\
Now we observe that,
for any $F\subset \Rn$ such that $F\cap \Co \Omega= \{ u\geq \lambda\}\cap \Co \Omega$, 
	\[
		 \Per_s\left( \left\{ u\geq \lambda\right\} , \Omega\right) 
		 =  \G(\chi_{\{u\geq \lambda\}}, \Omega) 
		 \leq  \G(\chi_F, \Omega)
		 =\Per_s\left( F, \Omega\right) .
	\]
	This, together with~\eqref{ECsquest}, concludes the proof of the desired result in~\eqref{SJkmd-2}.
\end{proof}

Theorem~\ref{thm211} can be seen as a fractional counterpart
of a classical result stated on page~249 of~\cite{bomby}.

Notice that, thanks to the co-area formula
in Remark~\ref{coareag}, we have the viceversa of Theorem~\ref{thm211}.

\begin{proposition}\label{fgha}
Let $u\in \W^s(\Omega)$. If the set $\{ u \geq \lambda \}$ is $s$-minimal in $\Omega$ for almost every $\lambda \in \R$, then $u\in \W^s(\Omega)$ is an $s$-minimal function
in~$\Omega$. 
\end{proposition}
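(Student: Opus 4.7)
The plan is to derive the minimisation inequality~\eqref{GH2} for $u$ directly from the co-area-type identity recorded in Remark~\ref{coareag}, by reading off the non-positivity of the left-hand side from the level-by-level hypothesis.

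More concretely, I would fix an arbitrary competitor $v\in \W^s(\Omega)$ with $u=v$ almost everywhere in $\Co\Omega$. By~\eqref{GHE} the double integral appearing on the left-hand side of~\eqref{GH2} is absolutely convergent, and Remark~\ref{coareag} identifies it with
\[
\int_{-\infty}^{+\infty}\bigl(\Per_s(\{u\ge t\},\Omega)-\Per_s(\{v\ge t\},\Omega)\bigr)\,dt.
\]
Thus everything reduces to controlling the sign of this integrand for almost every $t$.

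The comparison at each level is then immediate. Since $u=v$ almost everywhere in $\Co\Omega$, for every $t\in\R$ the super-level sets satisfy $\{u\ge t\}\setminus\Omega=\{v\ge t\}\setminus\Omega$ up to a Lebesgue-negligible set, so $\{v\ge t\}$ is an admissible competitor for $\{u\ge t\}$ in the sense of Definition~\ref{PERMI}. By hypothesis, for almost every $t$ the set $\{u\ge t\}$ is $s$-minimal in $\Omega$; in particular $\Per_s(\{u\ge t\},\Omega)$ is finite and
\[
\Per_s(\{u\ge t\},\Omega)\le\Per_s(\{v\ge t\},\Omega).
\]
Consequently the integrand above is non-positive for almost every $t$, the layer-cake integral is non-positive, and we conclude that
\[
\iint_{Q(\Omega)}\bigl(|u(x)-u(y)|-|v(x)-v(y)|\bigr)\,\frac{dx\,dy}{|x-y|^{n+s}}\le 0,
\]
which is precisely the content of~\eqref{GH2}.

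Since the co-area identity in Remark~\ref{coareag} is already established, the argument is quite short and I do not anticipate any substantive obstacle. The only minor point of care is to note that the fractional perimeter is insensitive to modifications on Lebesgue-negligible sets, so that $\{v\ge t\}\setminus\Omega=\{u\ge t\}\setminus\Omega$ up to measure zero is sufficient to run the minimality comparison at almost every $t$; the rest is routine bookkeeping through the co-area identity.
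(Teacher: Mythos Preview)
Your argument is correct and matches the paper's proof essentially line for line: both apply the co-area identity of Remark~\ref{coareag} to reduce the functional inequality to the level-by-level comparison $\Per_s(\{u\ge t\},\Omega)\le\Per_s(\{v\ge t\},\Omega)$, valid for almost every $t$ by the $s$-minimality hypothesis. The only difference is cosmetic---the paper introduces the null set $\Sigma$ of bad levels explicitly, while you phrase the same thing as ``for almost every $t$''---and your remark about insensitivity to null sets in the exterior data is a welcome point of care that the paper leaves implicit.
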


\begin{proof}
Let
	\[
		\Sigma := \{ \lambda \in \R \; | \; \{u\geq \lambda\}   \mbox{ is not an
$s$-minimal set in }  \Omega \}.
	\]
By assumption, we know that~$ |\Sigma|=0$. 
	Let $v\in \W^s(\Omega)$ be such that~$v=u$ in~$\Co\Omega$. By the co-area formula in Remark~\ref{coareag}, we obtain
	\bgs{
	& \iint_{Q(\Omega)} \left(|u(x)-u(y)|- |v(x)-v(y)|\right) \frac{dx \, dy}{|x-y|^{n+s}} 
	\\
	= &\; \int_{\R \setminus \Sigma} \big( \Per_s\left( \left\{ u\geq \lambda\right\} , \Omega\right)  - \Per_s\left( \left\{ v\geq \lambda\right\} , \Omega\right) \big) \, d\lambda  \leq 0,
	}
	by the $s$-minimality of the set~$\{ u \geq \lambda \}$.
\end{proof}

The claim in Theorem~\ref{STER} is now a direct consequence
of Theorem~\ref{thm211} and
Proposition~\ref{fgha}.

\section{Characteristic functions of $s$-minimal sets and proof of Theorem~\ref{TUTT}}\label{GSmshde23d}

The goal of this section is to prove that
the characteristic functions of $s$-minimal sets are minimisers of $\tilde \G$.
As a matter of fact, a direct consequence of~\eqref{jhgf}
is that characteristic functions of $s$-minimal sets are minimisers of~$\tilde \G$
with respect to characteristic  functions. We prove here the stronger result
in Theorem~\ref{TUTT}, which allows competitors that are not necessarily characteristic functions.

\begin{proof}[Proof of Theorem~\ref{TUTT}]
We see at first that $\mathcal T_s(\chi_E,\Omega)<+\infty$, hence, by~\eqref{CoA con G},
	\begin{equation}\label{VGAJ-AA1}
		 \G(\chi_E,\Omega) = \int_{-\infty}^\infty \Per_s\left( \left\{ \chi_E \geq \lambda\right\},\Omega\right) \, d\lambda.
	\end{equation}
		For any $\lambda \in (0, 1]$, we have that 
			\[ 
			\{ \chi_E \geq \lambda\}=E,
			\]
and, if~$u$ is a competitor as in the statement of Theorem~\ref{TUTT}, 
		\[
			\{u\geq \lambda\} \cap \Co \Omega= \{\chi_{E_0}\geq \lambda\}\cap \Co \Omega=E_0.
		\] 		
	Therefore, by the $s$-minimality of the set~$E$, 
	\begin{equation}\label{9iks-93rjfjj-1}
		\Per_s(\{ \chi_E \geq \lambda\}, \Omega)= \Per_s(E, \Omega) \leq \Per_s(\{ u \geq \lambda\}, \Omega)
		\end{equation}
		whenever $\lambda \in (0, 1]$.
		Furthermore, when $\lambda >1$, then
		\[ 
			\{ \chi_E\geq \lambda\}= \emptyset,
			\]
		and, when $\lambda \leq 0$, we have that
		\[ 
			\{ \chi_E\geq \lambda\}= \Rn.
			\]
			Thus when $\lambda \in(-\infty,0] \cup (1,\infty]$ we get 
			\begin{equation}\label{9iks-93rjfjj-2}
			 \Per_s(\{ \chi_E\geq \lambda\}, \Omega)=0 \leq \Per_s(\{ u\geq \lambda\},\Omega).
			\end{equation}
			It follows from~\eqref{9iks-93rjfjj-1} and~\eqref{9iks-93rjfjj-2} that, for all~$\lambda\in\R$,
$$ \Per_s(\{ \chi_E\geq \lambda\}, \Omega)\leq \Per_s(\{ u\geq \lambda\},\Omega).$$
As a consequence,
			\begin{equation}\label{VGAJ-AA2} 
				\int_{-\infty}^\infty \Per_s(\{ \chi_E\geq \lambda\},\Omega) \, d\lambda
				\leq \int_{-\infty}^\infty \Per_s(\{ u\geq \lambda\},\Omega) \, d\lambda
				= \G(u,\Omega)
				\end{equation}
				using again~\eqref{CoA con G} in the last equality.\\ 
				{F}rom~\eqref{VGAJ-AA1} and~\eqref{VGAJ-AA2}, it follows that
				\[ 
				 \G(\chi_E,\Omega)\leq 
				\G (u,\Omega),
				\]
as desired.\end{proof}

For completeness, we give now some useful characterisations for the finiteness
of the fractional perimeter.

\begin{lemma} \label{CONFIZIC}
Let~$\Omega\subset\R^n$ be a bounded, open set.
Let~$E\subset\R^n$ and~$s\in(0,1)$.
Let
$$ {\mathcal{S}}_E:=\{ (x,x_{n+1})\in\R^n\times\R \;|\; x_{n+1}<\chi_E(x)\}=
\big(E\times(-\infty,1)\big)\cup\big(\Co E\times(-\infty,0)\big).$$
The following conditions are equivalent:
\begin{itemize}
\item[(i)] $\Per_s^{L} (E,\Omega)<+\infty$;
\item[(ii)] $\chi_E\in W^{s,1}(\Omega)$;
\item[(iii)] $\Per_s^{L} ({\mathcal{S}}_E,\Omega\times\R)<+\infty$.
\end{itemize}
\end{lemma}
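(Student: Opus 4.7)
The equivalence (i)$\Leftrightarrow$(ii) is immediate from the identity $\Per_s^{L}(E,\Omega)=\tfrac12[\chi_E]_{W^{s,1}(\Omega)}$ recalled in~\eqref{PLa}, together with the fact that $\chi_E\in L^1(\Omega)$ automatically, since $\Omega$ is bounded.

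For the substantive equivalence (i)$\Leftrightarrow$(iii), the plan is to write
\[
2\,\Per_s^{L}(\mathcal{S}_E,\Omega\times\R)=\int_\Omega\int_\Omega K(x,y)\,dx\,dy,\qquad K(x,y):=\iint_{\R^2}\frac{|\chi_{\mathcal{S}_E}(x,t)-\chi_{\mathcal{S}_E}(y,\tau)|}{\big(|x-y|^2+(t-\tau)^2\big)^{(n+1+s)/2}}\,dt\,d\tau,
\]
and to split $\Omega\times\Omega$ into four subregions according to whether $x,y$ lie in $E$ or in $\Co E$, exploiting the fact that $\chi_{\mathcal{S}_E}(x,\cdot)=\chi_{(-\infty,1)}$ if $x\in E$ and $\chi_{\mathcal{S}_E}(x,\cdot)=\chi_{(-\infty,0)}$ if $x\notin E$.

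When $x,y$ lie on the same side (both in $E$ or both in $\Co E$), the inner integrand reduces to $|\chi_{(-\infty,a)}(t)-\chi_{(-\infty,a)}(\tau)|/(\cdots)$ for some $a\in\{0,1\}$. Introducing the change of variables $w=|t-\tau|$ and using $\int_0^\infty w\,(r^2+w^2)^{-(n+1+s)/2}\,dw=c_{n,s}\,r^{1-n-s}$, one obtains $K(x,y)\le c_{n,s}\,|x-y|^{1-n-s}$. Since $n-1+s<n$, the corresponding double integral over $\Omega\times\Omega$ is bounded by a constant depending only on $n$, $s$, and $\Omega$, so this same-side contribution is always finite, regardless of $E$.

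When $x,y$ lie on opposite sides, say $x\in E$ and $y\in \Co E$, the inner integrand becomes $|\chi_{(-\infty,1)}(t)-\chi_{(-\infty,0)}(\tau)|/(\cdots)$. I will then perform the change of variable $u=t-\tau$ in the $(t,\tau)$ integral, observe that the $\tau$-slice of the support has length $(1-u)_+$, and split the resulting one-dimensional integral into two pieces. The piece $\int_{-\infty}^1 u\,(r^2+u^2)^{-(n+1+s)/2}\,du$ is bounded by an explicit antiderivative, while the piece $\int_{-\infty}^1 (r^2+u^2)^{-(n+1+s)/2}\,du$, after the rescaling $u=rv$ with $r=|x-y|$, equals $r^{-(n+s)}\int_{-\infty}^{1/r}(1+v^2)^{-(n+1+s)/2}\,dv$. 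This gives $K(x,y)=\frac{g(|x-y|)}{|x-y|^{n+s}}+O(1)$, with $g$ bounded and bounded away from $0$ on $[0,\diam(\Omega)]$; since $|x-y|^{-(n+s)}$ is itself bounded below on $\Omega\times\Omega$, one concludes $K(x,y)\asymp |x-y|^{-(n+s)}$ uniformly in the mixed regime, hence
\[
\int_{E\cap\Omega}\int_{\Co E\cap\Omega} K(x,y)\,dx\,dy\;\asymp\;\int_{E\cap\Omega}\int_{\Co E\cap\Omega}\frac{dx\,dy}{|x-y|^{n+s}}\;\asymp\;\Per_s^{L}(E,\Omega).
\]
Combining the four regions yields $\Per_s^{L}(\mathcal{S}_E,\Omega\times\R)<+\infty\Leftrightarrow \Per_s^{L}(E,\Omega)<+\infty$. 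The main technical point, and the place where I expect to have to be careful, is verifying that the prefactor $g$ in the mixed regime is strictly positive: the $O(1)$ remainder coming from the antiderivative piece must not be allowed to cancel the singular part, so a sharp lower bound on $\int_{-\infty}^{1/r}(1+v^2)^{-(n+1+s)/2}\,dv$ valid uniformly for $r\in(0,\diam(\Omega)]$ is what makes the argument work.
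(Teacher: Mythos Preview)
Your argument is correct and considerably more detailed than the paper's own proof, which simply observes that (i)$\Leftrightarrow$(ii) follows from~\eqref{PLa} and then cites Lemma~4.2.6 of~\cite{tesilu} for (ii)$\Leftrightarrow$(iii). What you have written is, in effect, a self-contained proof of that cited lemma via the explicit kernel computation. This buys independence from the external reference at the cost of a page of calculus; both approaches are legitimate, and yours is the more informative one for a reader without access to~\cite{tesilu}.

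Two small corrections that do not affect your conclusion. First, in the mixed regime the $\tau$-slice of the support of $|\chi_{(-\infty,1)}(t)-\chi_{(-\infty,0)}(\tau)|$ has length $|1-u|$, not $(1-u)_+$: you are missing the region $\{t\ge1,\ \tau<0\}$, which contributes the piece $u>1$. However, $\int_1^\infty (u-1)(r^2+u^2)^{-(n+1+s)/2}\,du$ is bounded uniformly in $r$ (dominated by $\int_1^\infty u^{-(n+s)}\,du<\infty$), so it is just another $O(1)$ term and the equivalence is unaffected. Second, your concern about the $O(1)$ remainder cancelling the singular part is unnecessary: by oddness, $\int_{-\infty}^1 u(r^2+u^2)^{-(n+1+s)/2}\,du=-\int_1^\infty u(r^2+u^2)^{-(n+1+s)/2}\,du<0$, so the remainder in $K(x,y)=g(r)r^{-(n+s)}-\int_{-\infty}^1 u(\cdots)\,du$ is in fact positive, and the lower bound $K(x,y)\ge g(D)\,r^{-(n+s)}$ is immediate without invoking the boundedness below of $r^{-(n+s)}$.
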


\begin{proof} The equivalence of~(i) and~(ii) follows directly from~\eqref{PLa}.
The equivalence of~(ii) and~(iii) is the content of Lemma~4.2.6 in~\cite{tesilu}.
\end{proof}

\section{Existence theory and Yin-Yang results:
construction of a function of least $W^{s,1}$-seminorm and
proof of Theorems~\ref{INTAL} and~\ref{thm13}}\label{GA:eucjex}

This section is devoted to the existence theory for the $s$-minimal functions
presented in Definition~\ref{defn11}
and, as a byproduct, to the construction of Yin-Yang minimal surfaces.

We first focus on the proof of Theorem~\ref{thm13},
which will also play a pivotal role in the proof of Theorem~\ref{INTAL}.
For this, we introduce a suitable notation for the ``tail'' near the domain:
namely, for a given open set $\mathcal O \supset \supset \Omega$, we define the ``local tail'' as in~\cite{teolu}, that is
	\begin{equation} \label{truntail}
		\Tail_s\left(\varphi, \mathcal O \setminus \Omega; x\right) := \int_{\mathcal O \setminus \Omega} \frac{ |\varphi(y)|}{|x-y|^{n+s}} \, dy.
	\end{equation}
	Notice that if $\mathcal O=\Rn$, then the ``entire tail''
in formula in~\eqref{coda} can be written
in terms of the ``local tail'', as indeed we have that
	\begin{equation*}
	\mathcal T_s (\varphi, \Omega) = \| \Tail_s(\varphi, \Rn \setminus \Omega; \cdot)\|_{L^1(\Omega)}.
	\end{equation*}

The next result relates the fractional seminorm
with the tail of an $s$-minimal function. It plays a crucial role in the proofs
of Theorems~\ref{INTAL} and~\ref{thm13}.

\begin{theorem}\label{thm51} Let~$d:= \diam (\Omega)$.
There exists $\Theta = \Theta(n,s)>1$ such that if $\varphi \colon \Co \Omega \to \R$ satisfies
\begin{equation}\label{4.1bis} \Tail_s\left(\varphi, \Omega_{\Theta d } \setminus \Omega; \cdot\right) \in L^1(\Omega) \end{equation}
	and  $u\in \W_\varphi^s(\Omega)$ is an
$s$-minimal function in~$\Omega$, then, for any $\lambda \in \R$, it holds that
	\[ 
		\|u- \lambda \|_{W^{s,1}(\Omega)} \leq C_{n,s,d} \| \Tail_s(\varphi - \lambda, \Omega_{\Theta d}\setminus \Omega, \cdot) \|_{L^1(\Omega)}.
	\]
\end{theorem}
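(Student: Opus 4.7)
The plan is to reduce to level sets of a non-negative $s$-minimal function and then absorb the far-field contribution by choosing a large cutoff $\Theta$. By the translation invariance of $s$-minimality (used in the proof of Lemma~\ref{lemma19}), I may replace $(u,\varphi)$ by $(u-\lambda,\varphi-\lambda)$ and assume $\lambda=0$. By Lemma~\ref{minpm} both $u_+$ and $u_-$ are $s$-minimal with exterior data $\varphi_+$ and $\varphi_-$, and since $\|u\|_{W^{s,1}(\Omega)}\le \|u_+\|_{W^{s,1}(\Omega)}+\|u_-\|_{W^{s,1}(\Omega)}$ while $|\varphi|=\varphi_++|\varphi_-|$, it suffices to establish the bound under the additional assumption $u\ge 0$, $\varphi\ge 0$.

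Under this assumption, Proposition~\ref{coarea} and Theorem~\ref{thm211} give
\[
\|u\|_{L^1(\Omega)}=\int_0^\infty |E(t)\cap\Omega|\,dt, \qquad \tfrac12[u]_{W^{s,1}(\Omega)}=\int_0^\infty \Per_s^L(E(t),\Omega)\,dt,
\]
where $E(t):=\{u\ge t\}$ is $s$-minimal in $\Omega$ with exterior datum $E_0(t):=\{\varphi\ge t\}\cap\Co\Omega$. Testing $E(t)$ against the competitor $F(t):=E_0(t)$ (which empties $E(t)$ inside $\Omega$), expanding both fractional perimeters and cancelling the common term $\int_{\Omega\setminus E(t)}\!\int_{E_0(t)}$, yields
\[
\Per_s^L(E(t),\Omega)+\int_{E(t)\cap\Omega}\!\int_{\Co\Omega\setminus E_0(t)}\frac{dy\,dx}{|x-y|^{n+s}}\le \int_{E(t)\cap\Omega}\!\int_{E_0(t)}\frac{dy\,dx}{|x-y|^{n+s}}.
\]

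I split $\Co\Omega = A \cup B$ with $A := \Omega_{\Theta d}\setminus\Omega$ and $B := \Co\Omega_{\Theta d}$. The inclusion $B_{\Theta d}(x)\subseteq\Omega_{\Theta d}$ for every $x\in\Omega$ gives the far-field upper bound $\int_B|x-y|^{-n-s}\,dy\le C_{n,s}(\Theta d)^{-s}$; meanwhile, once $\Theta\ge 2$ the annulus contains $\Omega_{2d}\setminus\Omega$, a shell of volume $\ge c_n d^n$ (with $c_n$ independent of $\Omega$) all within distance $\le 3d$ of any $x\in\Omega$, yielding the uniform lower bound $\int_A|x-y|^{-n-s}\,dy\ge c_{n,s}d^{-s}$. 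Using the identities $\int_{\Co\Omega\setminus E_0}=\int_A-\int_{A\cap E_0}+\int_{B\setminus E_0}$ and $\int_{E_0}=\int_{A\cap E_0}+\int_{B\cap E_0}$, the displayed inequality rearranges to
\[
\Per_s^L(E(t),\Omega)+c_{n,s}d^{-s}|E(t)\cap\Omega|\le 2\int_{E(t)\cap\Omega}\!\int_{A\cap E_0(t)}\frac{dy\,dx}{|x-y|^{n+s}}+C_{n,s}(\Theta d)^{-s}|E(t)\cap\Omega|,
\]
and choosing $\Theta=\Theta(n,s)$ large enough so that $C_{n,s}\Theta^{-s}\le c_{n,s}/2$ absorbs the last term, leaving $\Per_s^L(E(t),\Omega)+\tfrac{c_{n,s}}{2d^s}|E(t)\cap\Omega|\le 2\int_{E(t)\cap\Omega}\!\int_{A\cap E_0(t)}|x-y|^{-n-s}\,dy\,dx$.

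Finally, integrating over $t\in(0,\infty)$ and using Fubini together with the identity $\int_0^\infty \chi_{E(t)}(x)\chi_{E_0(t)}(y)\,dt=\min(u(x),\varphi(y))\le\varphi(y)$ on the right-hand side yields both $[u]_{W^{s,1}(\Omega)}$ and $d^{-s}\|u\|_{L^1(\Omega)}$ bounded by $\|\Tail_s(\varphi,A;\cdot)\|_{L^1(\Omega)}$ up to universal constants, which proves the claim. The main obstacle is the absorption step: the far-field upper bound and the annulus lower bound both carry the factor $d^{-s}$, but the far-field decay $(\Theta d)^{-s}=\Theta^{-s}d^{-s}$ gains the extra $\Theta^{-s}$ needed to absorb it---precisely the quantitative mechanism underlying the Yin-Yang Theorem~\ref{thm13}, which is why the two results are proved together in this section.
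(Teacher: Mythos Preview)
Your proof is correct and takes a genuinely different route from the paper. The paper works directly at the function level: it compares $u-\lambda$ with the competitor $v$ that vanishes in $\Omega$ and equals $\varphi-\lambda$ outside, obtains
\[
[u-\lambda]_{W^{s,1}(\Omega)}\le 2\int_\Omega\!\int_{\Co\Omega}\frac{|\varphi(y)-\lambda|-|u(x)-\varphi(y)|}{|x-y|^{n+s}}\,dy\,dx,
\]
splits the outer integration into $\Omega_{\Theta d}\setminus\Omega$ and its complement, and then invokes a fractional Poincar\'e inequality (from~\cite{teolu,tesilu}) to control $\|u-\lambda\|_{L^1(\Omega)}$ by the cross term $\int_\Omega\int_{\Omega_{\Theta d}\setminus\Omega}|u-\varphi|\,|x-y|^{-n-s}$; the absorption of the far-field contribution proceeds as in your argument.

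Your route instead descends to the set level via Lemma~\ref{minpm} and Theorem~\ref{thm211}, compares each level set with the ``empty inside'' competitor, and replaces the Poincar\'e step by the elementary geometric lower bound $\int_{\Omega_{\Theta d}\setminus\Omega}|x-y|^{-n-s}\,dy\ge c_{n,s}d^{-s}$, then reassembles via the co-area formula. Two remarks: (i) since in this paper $u_-:=\min\{u,0\}\le 0$, to feed $u_-$ into your ``$u\ge 0$'' case you should apply it to $-u_-$, which is still $s$-minimal (multiplication by $-1$ preserves $s$-minimality, as noted in the proof of Lemma~\ref{lemma19}); (ii) your annulus bound $|\Omega_{2d}\setminus\Omega|\ge c_n d^n$ with a universal $c_n$ is actually sharper than the paper's $|\Omega_d\setminus\Omega|\ge c\,d^n$ with $c$ possibly depending on $\Omega$, so your constant depends only on $n,s,d$ as the statement suggests. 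The trade-off is that the paper's argument is logically self-contained within Section~\ref{GA:eucjex}, whereas yours relies on the Section~\ref{uno} machinery (Theorem~\ref{thm211} and Lemma~\ref{minpm}); there is no circularity, since those results are proved independently of Theorem~\ref{thm51}.
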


\begin{proof}
Notice at first that, for any~$\Theta>1$ and~$\lambda\in\R$,
	\[
	\|\Tail_s\left(\varphi-\lambda, \Omega_{\Theta d } \setminus \Omega; \cdot\right)\|_{L^1(\Omega)}   \leq \|\Tail_s\left(\varphi, \Omega_{\Theta d } \setminus \Omega; \cdot\right) \|_{L^1(\Omega)}+ |\lambda|\, \Per_s(\Omega, \Rn).\]
	Hence, in light of~\eqref{4.1bis}, we find that
	$\Tail_s\left(\varphi-\lambda, \Omega_{\Theta d } \setminus \Omega; \cdot\right) \in L^1(\Omega) $.

We now follow the argument in~\cite[Proposition 4.5.9]{tesilu}, letting 
	\sys[v:=]{& 0 &\mbox{ in } &\Omega,
				\\
				& \varphi -\lambda &\mbox{ in } &\Co \Omega.
				}
Since $u\in\W_\varphi^s(\Omega)$ is an
$s$-minimal function, then $u-\lambda \in \W^s_{\varphi -\lambda }(\Omega)$ is an
$s$-minimal function as well. Thus, since~$v$ is a competitor for~$u-\lambda$,
Definition~\ref{defn11} yields that
	\bgs{
		0\ge&\;
\iint_{Q(\Omega)} \big(|(u(x)-\lambda) -(u(y)-\lambda)|-|v(x)  - v(y)|\big) 
	\frac{dx\, dy}{|x-y|^{n+s}}\\
	=&\;\int_{\Omega}\int_{\Omega} \frac{| u(x)-u(y)| }{|x-y|^{n+s}}
	\,dx\, dy+2
\int_{\Omega}\int_{\Co\Omega}
\big(|u(x)-\varphi(y)|-| \varphi(y)-\lambda|\big) 
	\frac{dx\, dy}{|x-y|^{n+s}},
}
and consequently
\begin{equation}\label{85gfbn-2gvbb}
		[u]_{W^{s,1}(\Omega)} = [u-\lambda ]_{W^{s,1}(\Omega)} \leq 2 
			\int_{\Omega} \left( \int_{\Co \Omega} 
				\frac{|\varphi(y)-\lambda|-|u(x)-\varphi(y)| }{|x-y|^{n+s}}\, dy\right) dx.
	\end{equation}
	Moreover, we have that
\begin{equation*}\label{85gfbn-2gvbb2}\begin{split}
	&  \int_{\Omega} \left( \int_{\Co \Omega} \frac{|\varphi(y)-\lambda|-|u(x)-\varphi(y)| }{|x-y|^{n+s}}\, dy\right) dx
	\\  
	\leq &\; 
	\int_{\Omega} \left( \int_{ \Omega_{\Theta d} \setminus \Omega} \frac{|\varphi(y)-\lambda| }{|x-y|^{n+s}}\, dy\right) dx 
	- \int_{\Omega} \left( \int_{ \Omega_{\Theta d} \setminus \Omega} \frac{|u(x)-\varphi(y)| }{|x-y|^{n+s}}\, dy\right) dx 
	\\
	&\;+ 
	\int_{\Omega} \left( \int_{\Co \Omega_{\Theta d} } \frac{|u(x)-\lambda|}{|x-y|^{n+s}}\, dy\right) dx
	\\ \leq &\; 
	\int_{\Omega} \left( \int_{ \Omega_{\Theta d} \setminus \Omega} \frac{|\varphi(y)-\lambda| }{|x-y|^{n+s}}\, dy\right) dx 
	- \int_{\Omega} \left( \int_{ \Omega_{\Theta d} \setminus \Omega} \frac{|u(x)-\varphi(y)| }{|x-y|^{n+s}}\, dy\right) dx 
	\\
&\;+ 
	\int_{\Omega} \left( \int_{\Co B_{\Theta d}(x) } \frac{|u(x)-\lambda|}{|x-y|^{n+s}}\, dy\right) dx
	\\
	\leq &\; \|\Tail_s(\varphi-\lambda,\Omega_{\Theta d} \setminus \Omega; \cdot)\|_{L^1(\Omega)} +\frac{c_n(\Theta d)^{-s}}s \|u-\lambda\|_{L^1(\Omega)}
	-
	\int_{\Omega} \left( \int_{ \Omega_{\Theta d} \setminus \Omega} \frac{|u(x)-\varphi(y)| }{|x-y|^{n+s}}\, dy\right) dx ,
	\end{split}\end{equation*}
for some positive constant~$c_n$.
As a consequence, it follows 
in~\eqref{85gfbn-2gvbb} that 
\begin{equation}\label{85gfbn-2gvbb3}\begin{split}
&	[u-\lambda]_{W^{s,1}(\Omega)} 
		+\int_{\Omega} \left( \int_{ \Omega_{\Theta d} \setminus \Omega} \frac{|u(x)-\varphi(y)| }{|x-y|^{n+s}}\, dy\right) dx  
			\\ &\qquad \leq  
			2   \|\Tail_s(\varphi-\lambda,\Omega_{\Theta d} \setminus \Omega; \cdot)\|_{L^1(\Omega)} 
			   +
			   \frac{c_n(\Theta d)^{-s}}s \|u-\lambda\|_{L^1(\Omega)}.
	\end{split}\end{equation}
	Now, using a fractional Poincar\'e inequality (see~\cite{teolu} or~\cite[Lemma D.1.6]{tesilu}) we have that
\begin{equation}\label{utyrgfvccbn}
	 \|u-\lambda\|_{L^1(\Omega)}
	 	= \|u-\lambda-v\|_{L^1(\Omega)} 
		 \leq \frac{\left(\diam(\Omega_{ d} )\right)^{n+s} }{| \Omega_{ d} \setminus 
\Omega |}
		 \int_{\Omega}\left(  \int_{  \Omega_{ d} \setminus \Omega} 
\frac{|u(x)-\lambda |}{|x-y|^{n+s} }dy\right) dx.
\end{equation}
Furthermore, by the triangle inequality, for $y\in \Omega_d \setminus \Omega$,
$$ |u(x)-\lambda|\le |u(x)-u(y)|+|u(y)-\lambda|=  |u(x)-u(y)|+|\varphi(y)-\lambda|.$$
Therefore, 
\bgs{
\int_{\Omega}\left(  \int_{  \Omega_{ d} \setminus \Omega} 
\frac{|u(x)-\lambda |}{|x-y|^{n+s} }dy\right) dx
	\le&\; 
\int_{\Omega}\left(  \int_{  \Omega_{ d} \setminus \Omega} 
\frac{|u(x)-u(y)|}{|x-y|^{n+s} }dy\right) dx +
\int_{\Omega}\left(  \int_{  \Omega_{ d} \setminus \Omega} 
\frac{|\varphi(y)-\lambda |}{|x-y|^{n+s} }dy\right) dx
\\
=&\;
\int_{\Omega}\left(  \int_{  \Omega_{ d} \setminus \Omega} 
\frac{|u(x)-u(y)|}{|x-y|^{n+s} }dy\right) dx +
\|\Tail_s(\varphi-\lambda, \Omega_d \setminus \Omega; \cdot)\|_{L^1(\Omega)}.
}
Hence, since~$\Theta>1$,
\begin{equation}\label{jfhgrtuhgbueuhe089u8y6u}
\int_{\Omega}\left(  \int_{  \Omega_{ d} \setminus \Omega} 
\frac{|u(x)-\lambda |}{|x-y|^{n+s} }dy\right) dx\le 
\int_{\Omega}\left(  \int_{  \Omega_{ \Theta d} \setminus \Omega} 
\frac{|u(x)-u(y)|}{|x-y|^{n+s} }dy\right) dx +
\|\Tail_s(\varphi-\lambda, \Omega_{\Theta d} \setminus \Omega; \cdot)\|_{L^1(\Omega)}.
\end{equation}
Also, 
\begin{equation}\label{jfhgrtuhgbueuhe089u8y6u-2}
|\Omega_d \setminus \Omega| \geq c\, d^n,
\end{equation}
for some~$c>0$, possibly depending on~$n$ and~$\Omega$.

By inserting~\eqref{jfhgrtuhgbueuhe089u8y6u}
and~\eqref{jfhgrtuhgbueuhe089u8y6u-2} into~\eqref{utyrgfvccbn}, we obtain that
\begin{equation}\label{utyrgfvccbn3}
	 \|u-\lambda\|_{L^1(\Omega)}
		 \leq C_n d^s \left[
		 \int_{\Omega} \left(  \int_{  \Omega_{ \Theta d} \setminus \Omega}
 \frac{|u(x)-u(y)|}{|x-y|^{n+s} }dy\right) dx 
		 + \|\Tail_s(\varphi-\lambda, \Omega_{\Theta d} \setminus \Omega; \cdot)
\|_{L^1(\Omega)} \right]
,
\end{equation}
for some~$C_n$ possibly depending also on~$\Omega$.
\\
Summing up~\eqref{85gfbn-2gvbb3} and~\eqref{utyrgfvccbn3}, we deduce that
\bgs{
&\; 	[u-\lambda]_{W^{s,1}(\Omega)} 
		+\int_{\Omega} \left( \int_{ \Omega_{\Theta d} \setminus \Omega} \frac{|u(x)-\varphi(y)| }{|x-y|^{n+s}}\, dy\right) dx  
	+\frac{d^{-s}}{C_n}	\|u-\lambda\|_{L^1(\Omega)}\\
\leq & \;
			3   \|\Tail_s(\varphi-\lambda,\Omega_{\Theta d} \setminus \Omega; \cdot)\|_{L^1(\Omega)} 
			   +
			   \frac{c_n(\Theta d)^{-s}}s \|u-\lambda\|_{L^1(\Omega)}
\\
&\;  	
+	 \int_{\Omega} \left(  \int_{  \Omega_{ \Theta d} \setminus \Omega}
 \frac{|u(x)-u(y)|}{|x-y|^{n+s} }dy\right) dx .
}
Consequently, simplifying one term, 
it follows that
		 \bgs{
		 	\frac{d^{-s}}{C_n} \|u-\lambda\|_{L^1(\Omega)} \left(1- C_n\, c_n \frac{\Theta^{-s}}{s} \right)
		 	+ [u-\lambda]_{W^{s,1}(\Omega)} 
		 	\leq 3 \|\Tail_s(\varphi-\lambda, \Omega_{\Theta d} \setminus \Omega; 
\cdot)\|_{L^1(\Omega)} .
		 	} 
Choosing $\Theta$ large enough in the latter estimate, we obtain the desired result.
\end{proof}	  

We observe that
Theorem~\ref{thm51} entails the following simple, but interesting, consequence:

\begin{corollary}\label{cor52}
Let $\Theta, d$ be as in Theorem~\ref{thm51}. If $\varphi \colon \Co \Omega \to \R$ is such that $\varphi = \lambda$ in $\Omega_{\Theta d} \setminus \Omega$, and  $u\in \W_\varphi^s(\Omega)$ is an
$s$-minimal function in~$\Omega$, then $u=\lambda $ almost everywhere in $\Omega$.
\end{corollary}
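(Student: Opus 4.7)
The plan is to apply Theorem~\ref{thm51} directly with the specific choice of external datum given in the statement. First I would verify the hypothesis of Theorem~\ref{thm51}: since $\varphi=\lambda$ on $\Omega_{\Theta d}\setminus\Omega$, the local tail
\[
\Tail_s(\varphi,\Omega_{\Theta d}\setminus\Omega;x)=\int_{\Omega_{\Theta d}\setminus\Omega}\frac{|\lambda|}{|x-y|^{n+s}}\,dy
\]
is bounded uniformly for $x\in\Omega$ (the integrand is locally integrable away from $x$, and $\Omega$ is bounded), so it certainly belongs to $L^1(\Omega)$. Hence Theorem~\ref{thm51} applies to $u$.

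Next I would observe that for the shifted datum $\varphi-\lambda$, the hypothesis $\varphi=\lambda$ on $\Omega_{\Theta d}\setminus\Omega$ gives $\varphi-\lambda\equiv 0$ on that annular region. Consequently,
\[
\Tail_s(\varphi-\lambda,\Omega_{\Theta d}\setminus\Omega;x)=0\qquad\text{for every }x\in\Omega,
\]
and therefore $\|\Tail_s(\varphi-\lambda,\Omega_{\Theta d}\setminus\Omega;\cdot)\|_{L^1(\Omega)}=0$.

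Plugging this into the conclusion of Theorem~\ref{thm51} yields
\[
\|u-\lambda\|_{W^{s,1}(\Omega)}\leq C_{n,s,d}\,\|\Tail_s(\varphi-\lambda,\Omega_{\Theta d}\setminus\Omega;\cdot)\|_{L^1(\Omega)}=0,
\]
which forces $u=\lambda$ almost everywhere in $\Omega$, as desired. There is no real obstacle here: the entire argument is an immediate consequence of Theorem~\ref{thm51}, and the corollary essentially records the geometric meaning of that estimate (a constant external datum near $\partial\Omega$ forces the $s$-minimal function to be that same constant inside $\Omega$, regardless of the behaviour of $\varphi$ far from $\Omega$). The only point worth emphasising is that the constant $\Theta=\Theta(n,s)>1$ is exactly the one furnished by Theorem~\ref{thm51}, so no new smallness or largeness condition needs to be introduced.
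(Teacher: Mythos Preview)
Your approach is correct and matches the paper's: the corollary is stated there without proof, as an immediate consequence of Theorem~\ref{thm51}, which is exactly what you do. One small slip: the local tail $\Tail_s(\varphi,\Omega_{\Theta d}\setminus\Omega;x)=|\lambda|\int_{\Omega_{\Theta d}\setminus\Omega}|x-y|^{-n-s}\,dy$ is \emph{not} uniformly bounded for $x\in\Omega$ (it blows up as $x\to\partial\Omega$), but it does lie in $L^1(\Omega)$ since $\int_\Omega\int_{\Omega_{\Theta d}\setminus\Omega}|x-y|^{-n-s}\,dy\,dx\le\Per_s(\Omega,\R^n)<+\infty$ by the Lipschitz regularity of $\partial\Omega$; this is all that is needed, and your conclusion stands.
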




Theorem~\ref{thm13} is now a direct consequence of
Theorem~\ref{TUTT} and Corollary~\ref{cor52}.\medskip

To complement the picture given in Theorem~\ref{thm13},
we take this opportunity to stress the importance of the regularity
of the domain on the filling and emptying phenomena of nonlocal minimal surfaces.

\begin{proposition} \label{INSJ:PRO}
Let~$s\in(0,1)$.
Let~$\Omega\subset\R^n$ be a bounded, open set, with~$0\in\partial\Omega$.
Let~$r>0$ and
assume that~$\Omega\setminus B_{r/2}$
has Lipschitz boundary, and that
\begin{equation}\label{infid}
\Per_s(\Omega,B_r)=+\infty.\end{equation}
Let~$E_0\subset\Co\Omega$ be such that
\begin{equation}\label{siusa}
B_r\setminus\Omega\subset E_0.\end{equation}
Let~$E$ be an~$s$-minimal set in $\Omega$ with respect to~$E_0$.
Then,
\begin{equation}\label{ENONAx} E\cap\Omega\cap B_r\ne\emptyset.\end{equation}
\end{proposition}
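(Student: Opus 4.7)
The plan is to argue by contradiction. Suppose toward a contradiction that $E\cap\Omega\cap B_r=\emptyset$. Since $E$ is $s$-minimal in $\Omega$, Definition~\ref{PERMI} guarantees $\Per_s(E,\Omega)<+\infty$, and I would derive a contradiction with this finiteness by extracting an infinite contribution to $\Per_s(E,\Omega)$ coming from interactions between $\Omega\cap B_r$ and $B_r\setminus\Omega$.

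First, I would pin down $E$ inside $B_r$. The boundary condition $E\setminus\Omega=E_0\setminus\Omega$ together with hypothesis~\eqref{siusa} yields $E\cap(B_r\setminus\Omega)=E_0\cap(B_r\setminus\Omega)=B_r\setminus\Omega$. Combined with the contradiction assumption $E\cap\Omega\cap B_r=\emptyset$, this shows that, up to null sets, $\chi_E=1-\chi_\Omega$ on $B_r$. Consequently, for a.e.\ $x\in\Omega\cap B_r$ and a.e.\ $y\in B_r\setminus\Omega$ we have $|\chi_E(x)-\chi_E(y)|=1$.

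Next, I would restrict the integral defining $\Per_s(E,\Omega)$ to this interaction region. Since $\Omega\cap B_r\subset\Omega$, any pair $(x,y)$ with $x\in\Omega\cap B_r$ and $y\in B_r\setminus\Omega$ lies in $Q(\Omega)$, and, using the symmetry accounted for by the factor $\tfrac12$ in~\eqref{PER}, I obtain
\[
\Per_s(E,\Omega)\;\ge\;\int_{\Omega\cap B_r}\int_{B_r\setminus\Omega}\frac{|\chi_E(x)-\chi_E(y)|}{|x-y|^{n+s}}\,dx\,dy\;=\;\Per_s^{L}(\Omega,B_r),
\]
where the last identity is just~\eqref{PLa} applied with $E$ replaced by $\Omega$ and $\Omega$ replaced by $B_r$.

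The remaining step is to observe that $\Per_s^{L}(\Omega,B_r)=+\infty$. Splitting $\Per_s(\Omega,B_r)=\Per_s^{L}(\Omega,B_r)+\Per_s^{NL}(\Omega,B_r)$, the nonlocal piece is controlled by
\[
\Per_s^{NL}(\Omega,B_r)\;\le\;\int_{B_r}\int_{\Co B_r}\frac{dx\,dy}{|x-y|^{n+s}}\;\le\;\frac{C_{n,s}}{s}\int_{B_r}\dist(x,\partial B_r)^{-s}\,dx\;<\;+\infty,
\]
since $s<1$ (this is a routine computation with spherical shells around $x$). Combining this with the hypothesis~\eqref{infid} forces $\Per_s^{L}(\Omega,B_r)=+\infty$, contradicting $\Per_s(E,\Omega)<+\infty$ and establishing~\eqref{ENONAx}.

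The only delicate step is the finiteness of $\Per_s^{NL}(\Omega,B_r)$, which reduces to the standard $s<1$ integrability of $\dist(\cdot,\partial B_r)^{-s}$ and therefore is not really an obstacle. The Lipschitz assumption on $\Omega\setminus B_{r/2}$ is not invoked directly in the contradiction argument; its role is to ensure the existence of some admissible competitor with finite $s$-perimeter (so that the $s$-minimal set $E$ actually exists and the statement is nonvacuous), whereas the contradiction itself depends only on $\Per_s(E,\Omega)<+\infty$, which is built into the definition of $s$-minimality.
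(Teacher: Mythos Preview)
Your argument is correct and follows essentially the same route as the paper: assume $E\cap\Omega\cap B_r=\emptyset$, use~\eqref{siusa} to see that on $B_r$ the set $E$ coincides with $B_r\setminus\Omega$, and then bound $\Per_s(E,\Omega)$ from below by the interaction integral $\int_{\Omega\cap B_r}\int_{B_r\setminus\Omega}|x-y|^{-n-s}\,dx\,dy$, which differs from $\Per_s(\Omega,B_r)$ only by a term dominated by $\Per_s(B_r,\R^n)<+\infty$. Your bookkeeping via the $\Per_s^L/\Per_s^{NL}$ split is a cosmetic variant of the paper's direct subtraction of the two cross terms, and your closing remark about the Lipschitz hypothesis matches the paper's use of it to exhibit a finite-perimeter competitor and hence guarantee that the statement is nonvacuous.
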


\begin{proof} 
First of all, we point out that the statement in Proposition~\ref{INSJ:PRO} is non-void,
since an~$s$-minimal set does exist.
For this, let~$F_0:=B_r\cup E_0$.
We observe that
\bgs{
		\Per_s(F_0,\Omega)=&\;
	\int_{B_r\cap\Omega}\int_{ \Omega\setminus B_r} \frac{dx\,dy}{|x-y|^{n+s}}+
\int_{B_r\cap\Omega}\int_{(\Co E_0)\setminus \Omega} \frac{dx\,dy}{|x-y|^{n+s}} +
\int_{\Omega\setminus B_r}\int_{E_0} \frac{dx\,dy}{|x-y|^{n+s}} 
\\ 
	\leq &\; 
\int_{B_r}\int_{ \Co B_r} \frac{dx\,dy}{|x-y|^{n+s}}+
\int_{B_r}\int_{\Co B_r} \frac{dx\,dy}{|x-y|^{n+s}}+\int_{\Omega\setminus B_{r/2}}\int_{\Co (\Omega\setminus B_{r/2})} \frac{dx\,dy}{|x-y|^{n+s}}\\
	\leq&\;
			2\Per_s(B_r,\R^n)+\Per_s(\Omega\setminus B_{r/2},\Rn )
	\\
	<&\; +\infty.
}
This and the fact that~$F_0\setminus\Omega=E_0$ give that~$F_0$
is an admissible competitor with finite nonlocal perimeter.
Then, the existence of an $s$-minimal set~$E$ with datum~$E_0$
is warranted by Theorem~1.9
of~\cite{approxLuk}.

Hence, to complete the proof of Proposition~\ref{INSJ:PRO},
it remains to show~\eqref{ENONAx}.
To this end, we argue towards a contradiction and we assume the converse, namely that
\begin{equation}\label{siusa2}
E\cap\Omega\cap B_r=\emptyset.\end{equation}
Then, using~\eqref{siusa} and~\eqref{siusa2}, we have that
\bgs{
		\Per_s(E,\Omega)\geq&\; 
\int_{\Omega\setminus E}\int_{E\setminus\Omega} \frac{dx\,dy}{|x-y|^{n+s}}\\ \geq&\; 
\int_{\Omega\cap B_r}\int_{E_0} \frac{dx\,dy}{|x-y|^{n+s}}\\
		\geq &\; 
\int_{\Omega\cap B_r}\int_{B_r\setminus\Omega} \frac{dx\,dy}{|x-y|^{n+s}}\\
		= &\; 
\Per_s(\Omega,B_r)-
\int_{\Omega\cap B_r}\int_{\Co B_r\setminus\Omega} \frac{dx\,dy}{|x-y|^{n+s}}-
\int_{ B_r\setminus\Omega}\int_{\Co B_r\cap\Omega} \frac{dx\,dy}{|x-y|^{n+s}}
		\\
			\geq &\; 
\Per_s(\Omega,B_r)-2\Per_s(B_r,\R^n).
}
This and~\eqref{infid} give that~$\Per_s(E,\Omega)=+\infty$,
which is in contradiction with the $s$-minimality of
the set~$E$, and thus it completes the proof
of~\eqref{ENONAx}.
\end{proof}

An explicit construction of a domain~$\Omega$ satisfying the assumptions
of Proposition~\ref{INSJ:PRO} can be obtained as follows:
in Example~3 of~\cite{asympt1} (see in particular Section~3.10 of~\cite{asympt1})
one constructs a set~$E_\star\subset(0,1)$ which is the union of countably many
intervals accumulating to the origin and such that~$\Per_s(E_\star,(-r,r))=+\infty$
for all~$s\in(0,1)$ and all~$r\in(0,1)$. Then, we can define
$$\Omega:=\left(
\left(E_\star\times\left(-\infty,\frac{1}{10}\right)\right)
\cup\Big(\Co E_\star\times(-\infty,0)\Big)\right)\cap B_2,$$
and we deduce from Lemma~\ref{CONFIZIC}
(used here with~$r:=1/4$) that~$\Per_s(\Omega,B_{1/4})=+\infty$ for all~$s\in(0,1)$.
\medskip

Now, we focus on the proof of Theorem~\ref{INTAL}.
To this end, we state a general maximum principle.

\begin{theorem}\label{MAXPL}
There exists $\Theta = \Theta(n,s)>1$ such that, denoting
$d:= \diam (\Omega)$, the following statement holds true.

If $u\in \W^s(\Omega)$ is an $s$-minimal function in~$\Omega$, then
\begin{equation}\label{INund:sd9i331} \sup_{\Omega} u \leq \sup_{\Omega_{\Theta d}\setminus \Omega} u\qquad{\mbox{and}}\qquad \inf_{\Omega} u \geq \inf_{\Omega_{\Theta d}\setminus \Omega} u.\end{equation}
\end{theorem}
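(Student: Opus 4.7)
The plan is to reduce Theorem~\ref{MAXPL} to the Yin-Yang result of Theorem~\ref{thm13} via the level-set identification of Theorem~\ref{STER}. Fix $\Theta=\Theta(n,s)$ as provided by Theorem~\ref{thm13}, let $d:=\diam(\Omega)$, and set
\[
M:=\operatorname*{ess\,sup}_{\Omega_{\Theta d}\setminus\Omega} u.
\]
If $M=+\infty$ the first inequality in~\eqref{INund:sd9i331} is trivial, so we may assume $M<+\infty$. Pick any $\lambda>M$. Since $u\in\W^s(\Omega)$ is $s$-minimal, Theorem~\ref{STER} tells us that the superlevel set $E:=\{u\ge\lambda\}$ is an $s$-minimal set in $\Omega$, with exterior datum $E_0:=E\setminus\Omega$.

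Now I would exploit the choice of $\lambda$: by definition of $M$,
\[
E_0\cap\bigl(\Omega_{\Theta d}\setminus\Omega\bigr)=\{u\ge\lambda\}\cap\bigl(\Omega_{\Theta d}\setminus\Omega\bigr)=\emptyset
\]
(up to a negligible set). The first part of Theorem~\ref{thm13} then yields $E\cap\Omega=\emptyset$, that is, $u<\lambda$ almost everywhere in $\Omega$. Since $\lambda>M$ was arbitrary, this gives $\operatorname*{ess\,sup}_{\Omega}u\le M$, which is the first claim in~\eqref{INund:sd9i331}.

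For the infimum bound I would simply run the symmetric argument. Observe that, by the obvious identity $|(-u)(x)-(-u)(y)|=|u(x)-u(y)|$, the function $-u$ belongs to $\W^s(\Omega)$ and is $s$-minimal in $\Omega$ whenever $u$ is; applying the first part to $-u$ gives the second inequality in~\eqref{INund:sd9i331}. (Alternatively, and equivalently, one can use Theorem~\ref{STER} together with the second half of Theorem~\ref{thm13}, applied to the sublevel sets $\{u\le\lambda\}$ for $\lambda<\operatorname*{ess\,inf}_{\Omega_{\Theta d}\setminus\Omega}u$, noting that the fractional perimeter is invariant under taking complements.)

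The argument is essentially a short reduction, so there is no serious obstacle once Theorems~\ref{STER} and~\ref{thm13} are in place; the only small point to handle with care is the measure-theoretic meaning of $\sup$ and $\inf$ in the statement, which should be read as essential suprema and infima (consistent with the fact that elements of $\W^s(\Omega)$ are defined only up to null sets and that the conclusions of Theorems~\ref{STER} and~\ref{thm13} are themselves almost-everywhere statements).
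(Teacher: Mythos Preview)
Your proposal is correct and follows essentially the same route as the paper: the paper's proof also picks $\lambda>\sup_{\Omega_{\Theta d}\setminus\Omega}u$, invokes the level-set minimality (Theorem~\ref{thm211}, which is the nontrivial half of Theorem~\ref{STER}) together with Theorem~\ref{thm13} to deduce $\{u\ge\lambda\}\cap\Omega=\emptyset$, and then lets $\lambda$ decrease. The only cosmetic difference is that for the infimum bound the paper simply says ``similarly'' whereas you pass to $-u$; both arguments are equivalent.
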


\begin{proof} We prove the first inequality in~\eqref{INund:sd9i331},
since the second inequality can be proved similarly. To this end,
we suppose that $\sup_{\Omega_{\Theta d}\setminus \Omega} u <+\infty$ (otherwise, there is nothing to prove). We let 
	\[
	 \lambda > \sup_{\Omega_{\Theta d}\setminus \Omega} u,
	\]
	then
	\[
		 \{u\geq \lambda\} \cap \left({\Omega_{\Theta d}\setminus \Omega}\right) =\emptyset.
	\]
	As a consequence, by Theorems~\ref{thm211} and~\ref{thm13} it follows that 
	\[ 
		\{ u\geq \lambda\} \cap \Omega = \emptyset,
	\]
	therefore 
	\[
		\sup_{\Omega} u \leq \lambda 
	\]
	for any $\lambda > \sup_{\Omega_{\Theta d}\setminus \Omega} u$.
This proves the first inequality in~\eqref{INund:sd9i331}.
\end{proof}

Our goal is now to construct ``maximal'' and ``minimal''
$s$-minimal sets (this is needed since $s$-minimal
sets are not necessarily unique, as discussed in detail in Theorem~\ref{NONUN}).
For this maximal/minimal construction of $s$-minimal sets, we start by noticing that the $s$-minimality of sets
is preserved under union and intersections.

\begin{lemma}\label{lemma52}
Let $E_0 \subset \Co \Omega$, and let $E,F\subset \Rn$ be $s$-minimal sets
in $\Omega$ with respect to $E_0$. Then $E\cup F$
and~$E \cap F$ are also $s$-minimal sets
in~$\Omega$
with respect to $E_0$.  
\end{lemma}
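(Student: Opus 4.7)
The plan rests on the well-known submodularity of the fractional perimeter, which reduces to a pointwise algebraic inequality between characteristic functions. First, I would verify the datum condition: since~$E\setminus\Omega=F\setminus\Omega=E_0$, one has
\[
(E\cup F)\setminus\Omega=E_0=(E\cap F)\setminus\Omega,
\]
so both~$E\cup F$ and~$E\cap F$ are admissible competitors against~$E$ (and against~$F$) in Definition~\ref{PERMI}. In particular, by the $s$-minimality of~$E$ and~$F$,
\[
\Per_s(E,\Omega)\le\Per_s(E\cup F,\Omega)\qquad\text{and}\qquad \Per_s(F,\Omega)\le\Per_s(E\cap F,\Omega).
\]

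Next, I would establish the pointwise identity/inequality
\[
|\chi_{E\cup F}(x)-\chi_{E\cup F}(y)|+|\chi_{E\cap F}(x)-\chi_{E\cap F}(y)|\le |\chi_E(x)-\chi_E(y)|+|\chi_F(x)-\chi_F(y)|
\]
valid for every~$x,y\in\R^n$. This is a short case analysis on the membership of~$x$ and~$y$ in the four regions $E\cap F$, $E\setminus F$, $F\setminus E$ and $\Co(E\cup F)$: in all cases except when~$x$ and~$y$ lie in the ``opposite symmetric difference'' pieces~$E\setminus F$ and~$F\setminus E$ one gets equality, and in that last case the left-hand side vanishes while the right-hand side equals~$2$. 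Multiplying by $\frac12|x-y|^{-(n+s)}$ and integrating over~$Q(\Omega)$, recalling~\eqref{PER}, yields the submodularity inequality
\[
\Per_s(E\cup F,\Omega)+\Per_s(E\cap F,\Omega)\le \Per_s(E,\Omega)+\Per_s(F,\Omega).
\]

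Adding the two minimality inequalities of the first paragraph and comparing with the submodularity bound forces equality throughout; since both summands on the left are bounded below by the corresponding summands on the right, each individual inequality must be an equality, i.e.
\[
\Per_s(E\cup F,\Omega)=\Per_s(E,\Omega)\qquad\text{and}\qquad \Per_s(E\cap F,\Omega)=\Per_s(F,\Omega).
\]
Hence, given any measurable~$G\subset\R^n$ with~$G\setminus\Omega=E_0=(E\cup F)\setminus\Omega$, the set~$G$ is also a competitor for~$E$, so that by the $s$-minimality of~$E$
\[
\Per_s(E\cup F,\Omega)=\Per_s(E,\Omega)\le\Per_s(G,\Omega),
\]
proving that~$E\cup F$ is $s$-minimal in~$\Omega$ with respect to~$E_0$; the argument for~$E\cap F$ is identical using the $s$-minimality of~$F$.

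The only substantive step is the submodularity inequality; everything else is bookkeeping. The case analysis is elementary but must be done carefully because submodularity is an \emph{inequality}, not an equality, precisely owing to the ``cross'' configuration where~$x\in E\setminus F$ and~$y\in F\setminus E$, which is exactly the source of non-uniqueness highlighted in Theorem~\ref{NONUN}.
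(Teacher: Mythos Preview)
Your proof is correct and follows essentially the same approach as the paper: both rely on the pointwise submodularity inequality for characteristic functions, integrate it over~$Q(\Omega)$ to obtain $\Per_s(E\cup F,\Omega)+\Per_s(E\cap F,\Omega)\le\Per_s(E,\Omega)+\Per_s(F,\Omega)$, and combine this with the minimality of~$E$ and~$F$ against the competitors~$E\cup F$ and~$E\cap F$. The only cosmetic difference is that the paper uses one minimality inequality at a time (e.g., $\Per_s(F,\Omega)\le\Per_s(E\cap F,\Omega)$ plugged into submodularity to get $\Per_s(E\cup F,\Omega)\le\Per_s(E,\Omega)$ directly), whereas you add both minimality inequalities and then force termwise equality; the content is identical.
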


\begin{proof} By a direct computation, we see that
	\bgs{
	|\chi_{E \cup F}(x) -\chi_{E\cup F}(y)| +|\chi_{E \cap F}(x) -\chi_{E\cap F}(y)| \leq |\chi_{E }(x) -\chi_{E}(y)| +|\chi_{ F}(x) -\chi_{ F}(y)|.
	}
Hence, by~\eqref{PER} we get that 
	\eqlab{ \label{minmaxx}
	\Per_s(E\cap F,\Omega) + \Per_s(E \cup F,\Omega) \leq \Per_s(E,\Omega)+\Per_s(F,\Omega) 
	.}
	We stress that~\eqref{minmaxx} is in fact valid for all sets~$E$ and~$F$ (we have not used
	here any minimality condition or the fact that~$E\setminus\Omega=F\setminus\Omega)$.\\
Also, since $(E\cap F)\setminus\Omega=E_0$, by the $s$-minimality of the set~$F$ we obtain 
	\[	
		\Per_s(F,\Omega) 
		\leq \Per_s(E \cap F,\Omega).
	\]
{F}rom this and~\eqref{minmaxx} it follows that 
	\[ \Per_s(E\cup F,\Omega)\leq \Per_s(E,\Omega).\]
	As a result, since~$(E\cup F)\setminus\Omega=E_0$
and~$
E$ is an~$s$-minimal set,
we find that~$E\cup F$ is an
$s$-minimal set as well. The same holds for $E \cap F$. 
\end{proof}

The result in Lenma~\ref{lemma52}
allows us to introduce the notion of ``maximal'' and ``minimal''
$s$-minimal sets, by arguing as follows:

\begin{proposition}\label{Prop53}
Let $E_0 \subset \Co \Omega$ and
	\bgs{ \label{bigcup}
		\mathcal F: = \left\{ E \subset \Rn \; | \; E \mbox{ is an } s\mbox{-minimal
set in }\Omega, \: E\setminus \Omega =E_0  \right\}. 
	}
Then there exists a unique set $E\in \mathcal F$ with maximum volume inside $\Omega$ and
	\begin{equation}\label{MAX:SE1}
		E= \bigcup_{F\in \mathcal F} F.
	\end{equation}
Moreover, there exists a unique set $E\in \mathcal F$ with minimum volume inside $\Omega$ and
	\begin{equation}\label{MAX:SE2}
		E = \bigcap_{F\in \mathcal F} F.
	\end{equation}
\end{proposition}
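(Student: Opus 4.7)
The plan is to focus on the maximum case, with the minimum being entirely analogous (replace unions by intersections). First, assume $\mathcal F \neq \emptyset$ (otherwise the statement is vacuous). Set $V := \sup_{F \in \mathcal F} |F \cap \Omega|$, which is finite since $V \le |\Omega|$. Pick a maximising sequence $F_k \in \mathcal F$ with $|F_k \cap \Omega| \to V$, and inductively define $E_k := F_1 \cup \cdots \cup F_k$. By Lemma~\ref{lemma52} applied $k-1$ times, each $E_k$ lies in $\mathcal F$. Moreover $E_k \subset E_{k+1}$ and $|E_k \cap \Omega|\ge |F_k \cap \Omega|$, so $|E_k \cap \Omega| \to V$.

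Next, set $E := \bigcup_{k} E_k$. Clearly $E \setminus \Omega = E_0$ and, by monotone convergence, $|E \cap \Omega| = V$. The crucial step is to verify $E \in \mathcal F$. Since $\chi_{E_k} \uparrow \chi_E$ pointwise and boundedly, one has $\chi_{E_k} \to \chi_E$ in $L^1_{\loc}(\R^n)$. By Fatou's Lemma applied to the nonlocal perimeter (as already used, for instance, in the proof of Theorem~\ref{thm211}),
\begin{equation*}
\Per_s(E,\Omega) \le \liminf_{k\to+\infty} \Per_s(E_k,\Omega).
\end{equation*}
Because every element of $\mathcal F$ has the same (minimal) value of $\Per_s(\cdot,\Omega)$, the right-hand side equals $\Per_s(F_1,\Omega)<+\infty$. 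Applying the same lower semicontinuity against an arbitrary competitor $G$ with $G\setminus\Omega = E_0$ and using the $s$-minimality of each $E_k$, we obtain $\Per_s(E,\Omega) \le \Per_s(G,\Omega)$, so $E \in \mathcal F$.

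For uniqueness, suppose $E,E' \in \mathcal F$ both achieve volume $V$ inside $\Omega$. By Lemma~\ref{lemma52}, $E \cup E' \in \mathcal F$, so $|(E\cup E')\cap\Omega| \le V$; on the other hand $|(E\cup E')\cap\Omega| \ge |E\cap\Omega| = V$, which forces $|E' \setminus E|\cap \Omega = 0$ and symmetrically, so $E = E'$ almost everywhere. To establish the identity \eqref{MAX:SE1}, one inclusion is trivial since $E\in\mathcal F$. For the other, take any $F\in\mathcal F$: by Lemma~\ref{lemma52} the set $E\cup F$ belongs to $\mathcal F$, so maximality gives $|(E\cup F)\cap\Omega| = V = |E\cap\Omega|$, whence $|F\setminus E|\cap\Omega = 0$; outside $\Omega$ we have $F = E_0 = E$. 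Hence $F \subset E$ up to a null set, and \eqref{MAX:SE1} holds modulo Lebesgue-negligible sets.

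The analogous argument using decreasing intersections $E_k := F_1 \cap \cdots \cap F_k$, again stable under Lemma~\ref{lemma52}, yields a unique minimum-volume element satisfying \eqref{MAX:SE2}. The principal obstacle is the passage to the limit in the countable monotone construction: one must ensure that the limiting set is itself $s$-minimal, which hinges on the $L^1_{\loc}$ lower semicontinuity of $\Per_s(\cdot,\Omega)$ together with the fact that the common exterior datum $E_0$ is preserved along the sequence. A secondary subtlety is that $\bigcup_{F\in\mathcal F}F$ is \emph{a priori} an uncountable union and may fail to be measurable; this is resolved by interpreting \eqref{MAX:SE1}--\eqref{MAX:SE2} up to Lebesgue-null sets, as is standard in the theory of nonlocal minimal surfaces.
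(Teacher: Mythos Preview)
Your argument is correct and reaches the same conclusion, but the route to the existence of the maximiser differs from the paper's. The paper takes a maximising sequence $E_k\in\mathcal F$ and invokes the compact embedding $W^{s,1}(\Omega)\hookrightarrow L^1(\Omega)$ (using that $\|\chi_{E_k}\|_{W^{s,1}(\Omega)}$ is uniformly bounded by the common minimal perimeter) to extract an $L^1$-convergent subsequence, then applies Fatou. You instead replace the maximising sequence by the monotone sequence $E_k=F_1\cup\cdots\cup F_k$, which stays in $\mathcal F$ by Lemma~\ref{lemma52}, so convergence is automatic by monotone convergence and no compactness theorem is needed. This is a genuinely more elementary construction and has the pleasant side effect that the limit set is explicitly an increasing union of minimisers. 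The paper's approach, on the other hand, is the standard direct-method pattern and would work even without the lattice structure of $\mathcal F$ provided by Lemma~\ref{lemma52}. The uniqueness step and the proof of~\eqref{MAX:SE1} are essentially identical to the paper's, and your remark that~\eqref{MAX:SE1} must be read up to null sets (since the uncountable union need not be measurable) is a point the paper leaves implicit.
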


\begin{proof}
Let 
	\begin{equation}\label{68tuegef238392tfegvbsjwgf} 
		M:= \sup_{E\in \mathcal F} |E\cap \Omega|
	\end{equation}
	and let $E_k \in \mathcal F$ be such that
	\[
		 |E_k\cap  \Omega | \to M \quad \mbox{ for } k \to+\infty.
	\]
We notice that, in light of~\eqref{PLa},
	\[ 
			[\chi_{E_k\cap \Omega}]_{W^{s,1}(\Omega)} =2 	\Per^L_s(	E_k
\cap \Omega,\Omega)
= 2 	\Per^L_s(	E_k,\Omega)
	.\]
		As a result, we conclude that
		\[ \|\chi_{E_k\cap \Omega}\|_{W^{s,1}(\Omega)} \leq |\Omega| + 2 
		\Per^L_s(	E_k,\Omega)\le |\Omega| + 2 
		\Per_s(	E_k,\Omega),
		\]
which is bounded uniformly in~$k$, since every set~$E_k$
is~$s$-minimal in~$\Omega$.

		Hence, by the compact embedding of $W^{s,1}(\Omega)$ into $L^1(\Omega)$, there exists $E \subset \Rn$ with $E\setminus \Omega=E_0$ such that (up to a subsequence)
		\[
		\chi_{E_k} \to\chi_{ E} \; \mbox{ in } \; L^1(\Omega) \quad \mbox{ for } k \to+\infty.
		\]
		Therefore, recalling~\eqref{68tuegef238392tfegvbsjwgf},
		\[
		|E\cap \Omega|=\lim_{k \to+\infty} |E_k\cap \Omega| = M.
		\]
		Moreover, by Fatou's Lemma,
		\[
		 \Per_s(E,\Omega) \leq \liminf_{k \to+\infty} \Per_s(E_k,\Omega)\leq \Per_s(F,\Omega) \qquad \mbox{ for any } F\setminus \Omega =E_0,
		\] 
which establishes that
the set~$E$ is $s$-minimal.

Accordingly, to complete the proof of Proposition~\ref{Prop53},
it remains to check that such an~$s$-minimal set is
unique (up to null sets) and to establish~\eqref{MAX:SE1}. We start
with the uniqueness statement. To this end, 
denoting by~``$\triangle$''
the symmetric difference between two sets,
we
suppose that
there exist $E, F\in \mathcal F$, such that
\begin{equation}\label{Pjwdnd-khqsdsd-1}|E\cap \Omega|=
|F\cap \Omega|=M \end{equation}
and
\begin{equation*}
|E\triangle F|>0.\end{equation*}
In particular, up to exchanging the roles of~$E$ and~$F$, we can suppose that  
	\[ |E\setminus F|>0.\]
In addition, the sets~$E$ and~$F$ coincide outside~$\Omega$,
and therefore
$$ E\setminus F=(E\setminus F)\cap\Omega.$$
Thus we have that 
\bgs{
|E\cap\Omega|=&\; |E\cap F\cap\Omega|+
|(E\setminus F)\cap\Omega|\\
=&\; |E\cap F\cap\Omega|+
|E\setminus F|\\>&\; |E\cap F\cap\Omega|.
}
On this account, we see that
\bgs{
|(E\cup F)\cap\Omega| =&\;|(E\cap\Omega)\cup(F\cap\Omega)|\\
=&\; |E\cap\Omega|+|F\cap\Omega|-|E\cap F\cap\Omega|\\>&\;
|F\cap\Omega|.
}
This and~\eqref{Pjwdnd-khqsdsd-1} give that
\begin{equation}\label{Pjwdnd-khqsdsd-44}
|(E\cup F)\cap\Omega|>M.
\end{equation}
On the one hand, by Lemma~\ref{lemma52},
we have 
that
\begin{equation*}
(E\cup F) \in \mathcal F.\end{equation*}
Hence, in light of~\eqref{68tuegef238392tfegvbsjwgf}, we see that
$$ M\ge |(E\cup F)\cap\Omega|,$$
and this provides a contradiction with~\eqref{Pjwdnd-khqsdsd-44}.

We now focus on the proof of~\eqref{MAX:SE1}. For this, 
we observe that 
		 \begin{equation}\label{oho7088901}
		 	E\subset \bigcup_{F\in \mathcal F} F. 
		 \end{equation}
We claim that
\begin{equation}\label{oho708890}
 \bigcup_{F\in \mathcal F} F\subset E.
\end{equation}
To prove it we suppose, by contradiction, that there exists~$F\in \mathcal F$
such that~$|F \setminus E|>0$. Therefore, 
		 \[
		 \left|\left(E\cup F\right) \cap \Omega\right| 
		 = |E\cap \Omega|+\left|\left(F\setminus E\right) \cap \Omega \right| >|E\cap \Omega|.
		 \] This and~\eqref{Pjwdnd-khqsdsd-1} yield that
\begin{equation}\label{0-qtweg38484}
	 \left|\left(E\cup F\right) \cap \Omega\right|>M.
\end{equation}
Moreover, in virtue of Lemma~\ref{lemma52},
we know that~$E\cup F$ is still an $s$-minimal set, and therefore~$(E\cup F)\in \mathcal F$.
As a consequence, by~\eqref{68tuegef238392tfegvbsjwgf}, we conclude that~$M\ge
\left|\left(E\cup F\right) \cap \Omega\right|$. This is in contradiction with~\eqref{0-qtweg38484},
and therefore the claim in~\eqref{oho708890} is established.

{F}rom~\eqref{oho7088901} and~\eqref{oho708890} we complete the proof of~\eqref{MAX:SE1}.
The proof of the second part of Proposition~\ref{Prop53} can be done in a similar way.
\end{proof}

We take this opportunity to point out, as a simple but interesting consequence
of Proposition~\ref{Prop53}, that external data that are rotationally symmetric
always admit an $s$-minimal set which is rotationally symmetric as well
(more generally, external data with a given symmetry
always admit an $s$-minimal set with the same symmetry).

\begin{corollary} Let~${\mathcal{Z}}$ be a family of isometries.
Assume that, for every~${\mathcal{S}}\in {\mathcal{Z}}$, we have that
\begin{equation}\label{98ry34}
{\mathcal{S}}(\Omega)=\Omega.\end{equation}
Let~$E_0 \subset \Co \Omega$ and suppose that
\begin{equation}\label{98ry34-1}{\mbox{${\mathcal{S}}(E_0)=E_0\qquad$
for every~${\mathcal{S}}\in {\mathcal{Z}}$.}}
\end{equation}
Then, there exists at least a set~$E$ which is~$s$-minimal in~$\Omega$
with respect to $E_0$,
and such that
$${\mbox{${\mathcal{S}}(E)=E\qquad$ for every~${\mathcal{S}}\in {\mathcal{Z}}$.}}$$
\end{corollary}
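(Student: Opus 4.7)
The plan is to exploit the uniqueness of the maximal $s$-minimal set provided by Proposition~\ref{Prop53} and to show that this maximal set is automatically invariant under every isometry that preserves the data. Concretely, I let
\[
E:=\bigcup_{F\in\mathcal F} F
\]
be the maximal $s$-minimal set in $\Omega$ with respect to $E_0$, whose existence and uniqueness are guaranteed by Proposition~\ref{Prop53}. The goal is to prove that $\mathcal S(E)=E$ for every $\mathcal S\in{\mathcal Z}$.

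The key step is to verify that, for every $\mathcal S\in{\mathcal Z}$, the set $\mathcal S(E)$ again belongs to the family $\mathcal F$. First, since $\mathcal S(\Omega)=\Omega$ by~\eqref{98ry34}, and since $E\setminus\Omega=E_0$, we have
\[
\mathcal S(E)\setminus\Omega=\mathcal S(E\setminus\Omega)=\mathcal S(E_0)=E_0,
\]
where the last equality uses~\eqref{98ry34-1}. Next, a change of variables in the double integral defining $\Per_s$, exploiting that isometries preserve Euclidean distances and the Lebesgue measure, shows that $\Per_s(\mathcal S(A),\mathcal S(\Omega))=\Per_s(A,\Omega)$ for any measurable set $A$, and the same is true for any competitor $G$ with $G\setminus\Omega=E_0$. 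Using $\mathcal S(\Omega)=\Omega$ and applying this to $E$ and to an arbitrary competitor $F\setminus\Omega=E_0$ (noting that $\mathcal S^{-1}(F)$ is then also a competitor), one deduces that $\mathcal S(E)$ is $s$-minimal in~$\Omega$ with respect to~$E_0$. Hence $\mathcal S(E)\in\mathcal F$.

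To conclude, I observe that $\mathcal S$ being an isometry preserves Lebesgue measure, so
\[
|\mathcal S(E)\cap\Omega|=|\mathcal S(E)\cap\mathcal S(\Omega)|=|\mathcal S(E\cap\Omega)|=|E\cap\Omega|=M,
\]
where $M$ is the maximal volume defined in~\eqref{68tuegef238392tfegvbsjwgf}. Thus $\mathcal S(E)$ is itself an element of $\mathcal F$ attaining the maximum volume in $\Omega$. By the uniqueness part of Proposition~\ref{Prop53}, $\mathcal S(E)=E$ up to a set of measure zero. Since this holds for every $\mathcal S\in{\mathcal Z}$, the corollary follows.

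The only mildly delicate point will be recording the isometry-invariance of $\Per_s$ — a routine change of variables using $|\mathcal S(x)-\mathcal S(y)|=|x-y|$ and the fact that $\mathcal S$ has Jacobian determinant of absolute value one. The uniqueness step is cost-free since Proposition~\ref{Prop53} is stated in terms of (almost-everywhere) set equalities, and any alternative symmetry-preserving construction (e.g.\ using the minimal rather than the maximal element of $\mathcal F$) works equally well.
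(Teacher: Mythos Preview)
Your proof is correct and follows essentially the same approach as the paper: both take $E$ to be the maximal $s$-minimal set from Proposition~\ref{Prop53}, verify via a change of variables that isometries in~$\mathcal Z$ map the family~$\mathcal F$ into itself, and then conclude $\mathcal S(E)=E$. The only cosmetic difference is in the final step: the paper uses the explicit description $E=\bigcup_{F\in\mathcal F}F$ together with the fact that $F\mapsto\mathcal S(F)$ permutes~$\mathcal F$, whereas you invoke the uniqueness of the maximal-volume element directly---both are immediate consequences of Proposition~\ref{Prop53}.
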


\begin{proof} We take~$E$ as in~\eqref{MAX:SE1} (we could also take~$E$
as in~\eqref{MAX:SE2}, and we would obtain the same conclusion).
Let~${\mathcal{S}}\in{\mathcal{Z}}$.
Then, we can write~${\mathcal{S}}(x)=Ax+a$
for all~$x\in\R^n$, for some orthogonal matrix~$A$
and some~$a\in\R^n$, see e.g. Theorems~20.7 and~20.8 in~\cite{sernesi2000geometria}.
In particular,
\begin{equation}\label{532}
|\det D{\mathcal{S}}|=|\det A|=1.
\end{equation}
Moreover, we observe that if~$F\in \mathcal F$ then the nonlocal perimeter of~${\mathcal{S}}(F)$
is the same as the one of~$F$, namely
\begin{equation}\label{KA:mwer49545}
\begin{split}
	& \Per_s\left(\mathcal{S}(F),\Omega\right)
		\\
		=\,&\frac12\int_\Omega\int_\Omega
\frac{|\chi_{{\mathcal{S}}(F)}(x) -\chi_{{\mathcal{S}}(F)}(y)|}{|x-y|^{n+s}} \, dx \, dy+
\int_{\Omega}\int_{\Co \Omega} \frac{|\chi_{{\mathcal{S}}(F)}(x) -\chi_{{\mathcal{S}}(F)}(y)|}{|x-y|^{n+s}} \, dx \, dy
	\\
	=\,&
\frac12 \int_{{\mathcal{S}}^{-1}(\Omega)}\int_{{\mathcal{S}}^{-1}(\Omega)}
\frac{|\chi_{F}(\eta) -\chi_{F}(\xi)|}{|\eta-\xi|^{n+s}} \, d\eta \, d\xi+
\int_{{{\mathcal{S}}^{-1}(\Omega)}}\int_{{{\mathcal{S}}^{-1}(\Co\Omega)}} \frac{|\chi_{F}(\eta) -\chi_{F}(\xi)|}{|\eta-\xi|^{n+s}} \, d\eta \, d\xi
	\\
	=\,&
\frac12 \int_{ \Omega}\int_{\Omega}
\frac{|\chi_{F}(\eta) -\chi_{F}(\xi)|}{|\eta-\xi|^{n+s}} \, d\eta \, d\xi+
\int_{ \Omega }\int_{ \Co\Omega} \frac{|\chi_{F}(\eta) -\chi_{F}(\xi)|}{|\eta-\xi|^{n+s}} \, d\eta \, d\xi\\
=\,&\Per_s (F,\Omega)
,\end{split}
\end{equation}
thanks to~\eqref{PLa}, \eqref{98ry34} and~\eqref{532},
where the substitutions~$\eta:={\mathcal{S}}^{-1}(x)$ and~$\xi:={\mathcal{S}}^{-1}(y)$
have been used.

Also, from~\eqref{98ry34-1} we know that~${\mathcal{S}}(F\setminus \Omega)={\mathcal{S}}(E_0)=E_0$. This and~\eqref{KA:mwer49545}
give that~${\mathcal{S}}(F)\in\mathcal F$.

As a consequence, from~\eqref{MAX:SE1},
$$ E= \bigcup_{F\in \mathcal F} F= \bigcup_{F\in \mathcal F} {\mathcal{S}}(F)=
{\mathcal{S}}(E),$$
as desired.
\end{proof}

Now we give an auxiliary existence result.

\begin{theorem}\label{exlev}
There exists $\Theta = \Theta(n,s)>1$ such that, denoting
$d:= \diam (\Omega)$, the following statement holds true.
If $\varphi \colon \Co \Omega \to \R$ is such that
	\begin{equation}\label{PHIA}
		\varphi \in L^\infty(\Omega_{\Omega_{\Theta d} \setminus \Omega}) ,
	 \end{equation}
	then there exists an $s$-minimal function~$u\in \W_\varphi^s(\Omega)$. 
\end{theorem}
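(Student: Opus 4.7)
The plan is to implement the level set method: construct, for every $\lambda\in\R$, a canonical $s$-minimal set $E_\lambda$ with exterior datum $\Phi_\lambda:=\{\varphi\ge\lambda\}\cap\Co\Omega$, arrange these sets into a monotone family, and then reconstruct~$u$ as the supremum of the scales at which a given point belongs to~$E_\lambda$.

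First I would fix $\Theta=\Theta(n,s)$ large enough so that both Theorem~\ref{thm13} and Theorem~\ref{thm51} apply, set $M:=\|\varphi\|_{L^\infty(\Omega_{\Theta d}\setminus\Omega)}$, and note that for every $\lambda\in\R$ the competitor $\Phi_\lambda\cup\Omega$ has finite fractional perimeter in $\Omega$, bounded by $\Per_s(\Omega,\R^n)$. This guarantees that the class $\mathcal F_\lambda:=\{F\subset\R^n \,|\, F\setminus\Omega=\Phi_\lambda,\; F \text{ is $s$-minimal in } \Omega\}$ is non-empty, so Proposition~\ref{Prop53} provides a canonical maximal $s$-minimal set $E_\lambda$ with exterior datum~$\Phi_\lambda$.

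The crucial property is monotonicity in~$\lambda$. For $\lambda>\mu$ one has $\Phi_\lambda\subset\Phi_\mu$, so $E_\lambda\cap E_\mu$ is a competitor for $E_\lambda$ and $E_\lambda\cup E_\mu$ is a competitor for $E_\mu$. Combining the submodularity inequality established inside the proof of Lemma~\ref{lemma52} with the minimality of both $E_\lambda$ and $E_\mu$ forces equality, so $E_\lambda\cup E_\mu$ itself lies in $\mathcal F_\mu$; maximality of $E_\mu$ then yields $E_\lambda\cup E_\mu\subset E_\mu$, i.e.\ $E_\lambda\subset E_\mu$. Next, the Yin-Yang Theorem~\ref{thm13} gives the two-sided confinement: when $\lambda>M$ the set $\Phi_\lambda$ is disjoint from $\Omega_{\Theta d}\setminus\Omega$, hence $E_\lambda\cap\Omega=\emptyset$; when $\lambda<-M$ the set $\Phi_\lambda$ contains $\Omega_{\Theta d}\setminus\Omega$, hence $\Omega\subset E_\lambda$.

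With the monotone family $\{E_\lambda\}_{\lambda\in\R}$ at hand, I define
\[
u(x):=\sup\{\lambda\in\R \,|\, x\in E_\lambda\},
\]
which is measurable by monotonicity and satisfies $|u|\le M$ on $\Omega$. On $\Co\Omega$ we have $E_\lambda\cap\Co\Omega=\Phi_\lambda=\{\varphi\ge\lambda\}$, so $u=\varphi$ almost everywhere in $\Co\Omega$. From monotonicity one obtains the inclusions $\bigcup_{\mu>\lambda}E_\mu\subset E_\lambda\subset\bigcap_{\mu<\lambda}E_\mu$, and since the measure $\lambda\mapsto|E_\lambda\cap\Omega|$ is monotone it has only countably many jumps; hence $\{u\ge\lambda\}=E_\lambda$ up to a null set for almost every $\lambda\in\R$. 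The co-area formula~\eqref{hhh} together with the uniform bound $\Per_s^L(E_\lambda,\Omega)\le\Per_s(\Omega,\R^n)$ and the confinement $\lambda\in[-M,M]$ then gives $[u]_{W^{s,1}(\Omega)}<+\infty$, so $u\in\W^s_\varphi(\Omega)$. Finally, since almost every super-level set of $u$ coincides with an $s$-minimal set, Proposition~\ref{fgha} guarantees that $u$ itself is an $s$-minimal function.

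The delicate point I expect to be most technical is the passage from the monotone set-valued family $\{E_\lambda\}$ to the equality $\{u\ge\lambda\}=E_\lambda$ for almost every $\lambda$: one has to verify that the right-continuous and left-continuous envelopes of the family differ only on a countable set of levels, and that the maximality choice in Proposition~\ref{Prop53} is compatible with this reconstruction, so that the minimality property is inherited by the super-level sets of $u$ rather than lost on the null set where $E_\lambda$ may jump.
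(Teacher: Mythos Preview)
Your proposal is correct and follows essentially the same level-set strategy as the paper: build the maximal $s$-minimal set $E_\lambda$ with datum $\{\varphi\ge\lambda\}$ via Proposition~\ref{Prop53}, prove monotonicity through submodularity, use Theorem~\ref{thm13} to confine $u$ to $[-M,M]$ in $\Omega$, reconstruct $u$ as a supremum, identify $\{u\ge\lambda\}$ with $E_\lambda$ for a.e.~$\lambda$, and conclude with Proposition~\ref{fgha}.

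Two of your technical choices differ from the paper and are in fact simpler. First, for the identification $\{u\ge\lambda\}=E_\lambda$ a.e., the paper takes $u(x)=\sup\{t:x\in\overline{E_t}\}$ and then invokes the interior boundary regularity $|(\partial E_t)\cap\Omega|=0$ (from~\cite{nms}) together with Lemma~\ref{poi}; your argument via the countable jump set of the monotone map $\lambda\mapsto|E_\lambda\cap\Omega|$ avoids any appeal to regularity of nonlocal minimal surfaces, though you should be explicit about fixing representatives (say along rational $\lambda$) so that the uncountable family is genuinely nested as sets and $u$ is well defined. Second, to get $[u]_{W^{s,1}(\Omega)}<\infty$ the paper applies Theorem~\ref{thm51} to each $\chi_{E_t}$ and integrates the resulting tail bounds via~\eqref{aiNREasdtr56}; your route through the crude bound $\Per_s^L(E_\lambda,\Omega)\le\Per_s(E_\lambda,\Omega)\le\Per_s(\Omega,\R^n)$ combined with the confinement $\lambda\in[-M,M]$ is shorter and does not require Theorem~\ref{thm51} at this stage.
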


\begin{proof}
 For any~$t\in\R$, we consider the set 
 \begin{equation}\label{37893265869tgf65ytgf386ytgvkj}
 	\mathcal E_t := \{ x\in \Co \Omega \; | \; \varphi(x) \geq t\}
 \end{equation}
 and we let $E_t$ be the $s$-minimal set of maximum volume in $\Omega$ with respect to the exterior data $\mathcal E_t$ (which exists, thanks to Proposition~\ref{Prop53}, see in particular~\eqref{MAX:SE1}).
 
  We prove now, inspired by~\cite[Lemma 3.4]{SWZ}, that 
  \eqlab{ \label{tit} {\mbox{if $
 		\tau <t $, then $ E_t \subset E_\tau$, up to null sets. }}
  }
To this end, we observe that, if~$\tau <t$ then~$\mathcal E_t \subset \E_\tau$. As a consequence,
  \[
  	(E_t \cap E_\tau) \setminus \Omega = \mathcal E_t \cap \mathcal \E_\tau = \E_t \qquad{\mbox{and}}
  	\qquad 
  	(E_t \cup E_\tau) \setminus \Omega = \mathcal E_t \cup \mathcal \E_\tau = \E_\tau.
  \]
  These observations give that \begin{equation}\begin{split}\label{MNAsidwq45}
{\mbox{the set~$E_t \cap E_\tau$ is a competitor for~$E_t$,
and the set~$E_t \cup E_\tau$ is a competitor for~$E_\tau$.}} \end{split}\end{equation}
  Hence,
  the minimality  of~$E_t$ and~$E_\tau$ implies that
  \begin{equation}\label{AKsmd:945}
  	\Per_s(E_t,\Omega) \leq\Per_s(E_t \cap E_\tau,\Omega) \qquad{\mbox{and}}\qquad 
  	\Per_s(E_\tau,\Omega) \leq\Per_s(E_t \cup E_\tau,\Omega).
  \end{equation}
Thus, using~\eqref{minmaxx} and the first inequality in~\eqref{AKsmd:945}, we see that
\bgs{
	\Per_s(E_t,\Omega) +\Per_s(E_t\cup E_\tau,\Omega)
	\leq&\; \Per_s(E_t \cap E_\tau,\Omega)+
\Per_s(E_t \cup E_\tau,\Omega)
\\	
\le&\; \Per_s(E_t,\Omega) +\Per_s(E_\tau,\Omega),
}
and accordingly, 
\begin{equation*}
\Per_s(E_t\cup E_\tau,\Omega)\le \Per_s(E_\tau,\Omega).\end{equation*}
This and the second inequality in~\eqref{AKsmd:945} yield that
  \[
  	\Per_s(E_\tau,\Omega) =\Per_s(E_t \cup E_\tau,\Omega).
  \]
Consequently, exploiting the minimality of the set~$E_\tau$ and~\eqref{MNAsidwq45},
we conclude that also~$E_t \cup E_\tau$ is an $s$-minimal set in~$\Omega$.
Thus, since~$E_\tau$ is the $s$-minimal set with maximum volume,
necessarily we have that~$|E_t \cup E_\tau| 	=| E_\tau|$, and then
$$ |E_t\setminus E_\tau|=|E_t \cup E_\tau| 	-| E_\tau|=0,$$
which proves~\eqref{tit}.
\\
Now, we define $u\colon \Rn \to \R\cup\{-\infty,+\infty\}$ as
  \begin{equation} \label{8ihrh4h55}u(x):=
  \begin{cases} \varphi(x),& \mbox{ for } x\in \Co \Omega,
  \\
 			\sup\big\{ t \;|\;
 			x\in \overline{ E_t}\big\},
 			 & \mbox{ for } x\in \Omega.
  \end{cases}\end{equation}
  We claim that
  \begin{equation}\label{QWE}
  u\in L^\infty(\Omega).
  \end{equation}
To prove this, 
we take~$\Theta$ as in Theorem~\ref{thm13} and we point out that,
if $t \leq \displaystyle\inf_{\Omega_{\Theta d} \setminus \Omega} \varphi$, then 
  \[ 
 \mathcal E_t  \cap \left(\Omega_{\Theta d} \setminus \Omega\right) = \Omega_{\Theta d} \setminus \Omega.
  \]
Therefore, according to Theorem~\ref{thm13}, we have that 
  \[
  	E_t \cap \Omega = \Omega.
  \]  
As a consequence, for any~$x\in\Omega$, we have that~$x\in \overline{ E_t}$
for all~$t \leq \displaystyle\inf_{\Omega_{\Theta d} \setminus \Omega} \varphi$.
Accordingly, recalling~\eqref{PHIA}, we conclude that
\begin{equation} \label{0i73ryjh8756bfcb856hgnbv}\sup\big\{ t \;|\;
 			x\in \overline{ E_t}\big\}\ge \inf_{\Omega_{\Theta d} \setminus \Omega} \varphi>-\infty.\end{equation}
In the same way one proves that, for any~$x\in\Omega$,
\begin{equation*}\sup\big\{ t \;|\;
 			x\in \overline{ E_t}\big\}\le\sup_{\Omega_{\Theta d} \setminus \Omega} \varphi<+\infty.
\end{equation*}
{F}rom this and~\eqref{0i73ryjh8756bfcb856hgnbv}, we obtain~\eqref{QWE}, as desired.
As a result,
since $\Omega $ is bounded, we also find that
\begin{equation}\label{JSn0s884-293iedfjfj}
u\in L^1(\Omega).\end{equation} 
  Now, in view of~\eqref{37893265869tgf65ytgf386ytgvkj} and~\eqref{8ihrh4h55}, we see that, for any $t\in \R$,
\begin{equation}\label{jrugh694943}
  	\{ u \geq t\} \setminus \Omega =\{ \varphi \geq t\} \setminus \Omega= \mathcal E_t=E_t\setminus \Omega.
  	\end{equation}
As a result,
\begin{equation}\label{TGBOgkdfi55797}
  	\{ u \geq t\} \setminus E_t =\left(
\{ u \geq t\} \setminus E_t\right)\cap\Omega.  	\end{equation}
Another consequence of~\eqref{8ihrh4h55} is that
  	\begin{equation}\label{yth7a344356cP}
  \left(\overline{ E_t }\cap \Omega\right) \subset \{ u\geq t\} .
  	\end{equation}
  	Furthermore, in light of~\eqref{JSn0s884-293iedfjfj} and Lemma~\ref{poi}, we know that,
  	for almost any $t\in\R$,
  \begin{equation}\label{A2iskxmc9t4hgbP} |u^{-1}(t)\cap \Omega|=0.\end{equation}
In addition, since the set~$E_t$ is $s$-minimal in $\Omega$, 
from Corollary~4.4(i) in~\cite{nms} it follows that, for every~$\Omega'\subset\subset\Omega$,
$$ \mathcal H^{n-s}\big((\partial E_t)\cap \Omega'\big)<+\infty,$$
and, as a result,
$$\big|(\partial E_t)\cap \Omega'\big|=0.$$
For this reason, we obtain that
$$\big|(\partial E_t)\cap \Omega\big|=0.$$
Consequently, recalling~\eqref{TGBOgkdfi55797},
we deduce that
\begin{equation}\label{6648jfLLAPS}
\big|\{ u \geq t\} \setminus E_t \big|=\big|(
\{ u \geq t\} \setminus \overline{E_t})\cap\Omega\big|.\end{equation}
We also claim that
\begin{equation}\label{78-u02oj4756ythn}
  	\left(\{ u\geq t\} \setminus \overline{ E_t}\right) \cap \Omega \;\,\subset\;\, u^{-1}(t)\cap \Omega.
\end{equation}
To check this, we take~$p$ belonging to the set on the left hand side of~\eqref{78-u02oj4756ythn}
and we suppose, towards a contradiction, that
\begin{equation}\label{78-u02oj4756ythn2}
u(p)>t.\end{equation}
We stress that~$p\in\Omega$, hence, by~\eqref{8ihrh4h55} and~\eqref{78-u02oj4756ythn2},
\begin{equation}\label{78-u02oj4756ythn3} u(p)=\sup\big\{ \vartheta \;|\;
 			p\in \overline{ E_\vartheta}\big\}.
\end{equation}
We also recall that~$p\not\in \overline{ E_t}$. Hence, by~\eqref{tit},
we know that~$p\not\in \overline{ E_\tau}$ for all~$\tau\ge t$.
This and~\eqref{78-u02oj4756ythn3} yield that~$u(p)\le t$.
But this inequality is in contradiction with~\eqref{78-u02oj4756ythn2},
and therefore the proof of~\eqref{78-u02oj4756ythn} is complete.

Then, gathering the results in~\eqref{A2iskxmc9t4hgbP},
\eqref{6648jfLLAPS}
and~\eqref{78-u02oj4756ythn}, we deduce that,
for almost every~$t\in \R$,
\begin{equation*}\label{amt}\begin{split}
\big|\{ u\geq t \} \setminus E_t \big| 
=\; 
\big|(
\{ u \geq t\} \setminus \overline{E_t})\cap\Omega\big|
\le\big|u^{-1}(t)\cap \Omega\big|
=0.
\end{split}\end{equation*}
Together with~\eqref{jrugh694943} and \eqref{yth7a344356cP}, this implies that
\eqlab{\label{PAerq123PEoi}
	\big| \{u\geq t\}\triangle E_t\big|
=&\; 0,
	}
that is,
$E_t$ coincides with $\{u\geq t\}$ for almost every~$t\in \R$,
up to null sets.
As a result,
$$ \Per_s^L( \{u\geq t\} ,\Omega)=
\Per_s^L (E_t, \Omega).$$
For this reason,
employing the co-area formula in~\eqref{hhh} we have that
  	\begin{equation}\label{Pif83u4ehujPKSd5}
  		\frac12 [u]_{W^{s,1}(\Omega)} =\int_{-\infty}^\infty \Per_s^L( \{u\geq t\} ,\Omega) \, dt = \int_{-\infty}^\infty \Per_s^L (E_t, \Omega) \, dt.
  	\end{equation}
  	Also, using~\eqref{288},
  		\bgs{
  			|\varphi(y)| = \int_{0}^\infty \chi_{\{\varphi \geq t\} }(y) \, dt + \int_{-\infty}^0 (1-\chi_{\{\varphi \geq t\}}(y)) \, dt,
  		}
and therefore, recalling~\eqref{truntail} and exchanging integrals,
\begin{equation}\label{aiNREasdtr56}
\begin{split}&
\|\Tail_s(\varphi,\Omega_{\Theta d}\setminus \Omega, \cdot)\|_{L^1(\Omega)} \\=\;&	
\int_{\Omega} \left( \int_{\Omega_{\Theta d}\setminus \Omega} \frac{|\varphi(y)|} {|x-y|^{n+s}} \, dy \right) dx\\=\;&
\int_{0}^\infty \left[ \int_{\Omega}\left( \int_{\Omega_{\Theta d} \setminus \Omega} \frac{ \chi_{\{\varphi \geq t\}}(y)}{|x-y|^{n+s}} dy \right) dx \right] dt 
  			 +
  			\int_{-\infty}^0 \left[ \int_{\Omega} \left( \int_{\Omega_{\Theta d} \setminus \Omega} \frac{ |1-\chi_{\{ \varphi \geq t\} }(y)|}{|x-y|^{n+s}}  dy \right) dx \right] dt\\=\;&
  			\int_{0}^\infty  \|\Tail_s(\chi_{\{\varphi \geq t\}},\Omega_{\Theta d}\setminus \Omega, \cdot)\|_{L^1(\Omega)}\, dt + \int_{-\infty}^0   \|\Tail_s(1 - \chi_{\{\varphi \geq t\}},\Omega_{\Theta d}\setminus \Omega, \cdot)\|_{L^1(\Omega)}\, dt.
\end{split}\end{equation}
Furthermore, by Theorem~\ref{TUTT} and the $s$-minimality of the set~$E_t$, we know that~$\chi_{E_t}$
is an~$s$-minimal function, with external datum~$\chi_{\mathcal{E}_t}=\chi_{\{\varphi \geq t\}}$
in~$\Co\Omega$.
Consequently, using Theorem~\ref{thm51} with~$\lambda:=0$
and~$\lambda:=1$, we get that
\bgs{
& [\chi_{E_t}]_{W^{s,1}(\Omega)}\le \| \chi_{E_t}\|_{W^{s,1}(\Omega)}
	\le C\,
\|\Tail_s(\chi_{\{\varphi \geq t\}},\Omega_{\Theta d}\setminus \Omega, \cdot)\|_{L^1(\Omega)}, \quad \mbox{and }
	\\
	 &[ \chi_{E_t}]_{W^{s,1}(\Omega)}=
[1- \chi_{E_t}]_{W^{s,1}(\Omega)}
\le \|1- \chi_{E_t}\|_{W^{s,1}(\Omega)}
\le C\,
\|\Tail_s(1 - \chi_{\{\varphi \geq t\}},\Omega_{\Theta d}\setminus \Omega, \cdot)\|_{L^1(\Omega)}
}
for some~$C>0$. Hence, making use of~\eqref{PLa}
and~\eqref{aiNREasdtr56},
\bgs{
C\,\|\Tail_s(\varphi,\Omega_{\Theta d}\setminus \Omega, \cdot)\|_{L^1(\Omega)}
		\ge&\;
\int_{0}^\infty [\chi_{E_t}]_{W^{s,1}(\Omega)}\, dt + 	\int_{-\infty}^0[\chi_{E_t}]_{W^{s,1}(\Omega)}\, dt
	\\
	=&\;\int_{-\infty}^{+\infty}[\chi_{E_t}]_{W^{s,1}(\Omega)}\,dt
	\\
	=&\; 2
\int_{-\infty}^{+\infty}\Per_s^L(E_t,\Omega)\,dt.
}
For that reason and~\eqref{Pif83u4ehujPKSd5}, we have that
$$ C\,\|\Tail_s(\varphi,\Omega_{\Theta d}\setminus \Omega, \cdot)\|_{L^1(\Omega)}\ge
[u]_{W^{s,1}(\Omega)}.$$
This, together with~\eqref{PHIA}, gives that~$[u]_{W^{s,1}(\Omega)}<+\infty$,
thus proving that~$u\in \W_\varphi^s(\Omega)$.

Therefore, recalling also~\eqref{PAerq123PEoi},
we can apply Proposition~\ref{fgha} and obtain that~$u$ is an~$s$-minimal function, as desired.
\end{proof}

As a consequence of Theorem~\ref{exlev}, we can now prove the existence
result in Theorem~\ref{INTAL}, which is valid under an integrable control of the tail of the datum.

\begin{proof}[Proof of Theorem~\ref{INTAL}]
Let~$\Theta$ be as in Theorem~\ref{exlev}.
Given~$M>0$, we define
	\sys[\varphi_M:= ]
			{  & \min\left\{ M, \max \left\{ \varphi, -M\right\} \right\} &&  \mbox{ in } \quad  \Omega_{\Theta d} \setminus \Omega, 
			\\
			& \varphi &&  \mbox{ in } \quad  \Co  \Omega_{\Theta d}.
	}
	Notice that 
		\begin{equation}\label{7ujhed8uj3re8u}
			{|\varphi_M(y)|} \leq  {|\varphi (y)|}
		\qquad {\mbox{for all }}\;  y\in \Co \Omega,
		\end{equation}
and thus 
	\eqlab{\label{tailbM}
	\|\Tail_s(\varphi_M, \Omega_{\Theta d} \setminus \Omega; \cdot) \|_{L^1(\Omega)} \leq  \|\Tail_s(\varphi, \Omega_{\Theta d} \setminus \Omega; \cdot) \|_{L^1(\Omega)}<+\infty,
	}
thanks to~\eqref{Thbe2w4}.\\
In addition, according to Theorem~\ref{exlev}, there exists an
$s$-minimal function~$u_M \in \W^s_{\varphi_M}(\Omega)$.
Then, by Theorem~\ref{thm51},
	it holds that
	\bgs{
		\|u_M\|_{W^{s,1}(\Omega)} \leq C\,  \|\Tail_s(\varphi_M, \Omega_{\Theta d} \setminus \Omega; \cdot)\|_{L^1(\Omega)},
		} for some~$C>0$.
		This, together with~\eqref{tailbM}, says that $\|u_M\|_{W^{s,1}(\Omega)}$ is uniformly bounded, and thus, by compactness,  there exists~$u\in {W^{s,1}(\Omega)}$ such that, up to a subsequence, for  $M\to+\infty$,
	\begin{equation}\label{ED764y3r}
	u_M \longrightarrow u \qquad  \mbox{ in } \; L^1(\Omega) \quad \mbox{ and a.e. in } \;  \Omega
	.
	\end{equation} 
	It remains to prove that $u$ is an
$s$-minimal function. For this, we consider any competitor $v$, and denote 
	\sys[v_M: =]
		{& v && \mbox{ in } \; \Omega,
		\\
		&\varphi_M && \mbox{ in } \; \Co \Omega
	.}
	Then, since $u_M$ is an
$s$-minimal function and $v_M$ is a competitor for it,
\begin{equation}\label{093240476y} \iint_{Q(\Omega)} \left( |u_M(x)-u_M(y)| -|v_M(x)-v_M(y)|\right) \frac{ dx \, dy} {|x-y|^{n+s}}\le0.\end{equation}
We also point out that,
if~$x$, $y\in\Omega$,
\begin{equation}\label{00093240476y2}\begin{split}
&|u(x)-u(y)|-|u_M(x)-u_M(y)| -|v(x)-v(y)|+|v_M(x)-v_M(y)|\\
=\;&|u(x)-u(y)|-|u_M(x)-u_M(y)| .
\end{split}\end{equation}
Moreover, if~$x\in\Omega$ and~$y\in\Co\Omega_{\Theta d} $,
\begin{equation}\label{093240476y2}\begin{split}
&|u(x)-u(y)|-|u_M(x)-u_M(y)| -|v(x)-v(y)|+|v_M(x)-v_M(y)|\\
=\;&|u(x)-\varphi(y)|-|u_M(x)-\varphi(y)| -|v(x)-\varphi(y)|+|v(x)-\varphi(y)|\\
=\;&|u(x)-\varphi(y)|-|u_M(x)-\varphi(y)| \\
\le\;&|u(x)-u_M(x)| .
\end{split}\end{equation}
Similarly,
if~$x\in\Omega$ and~$y\in\Omega_{\Theta d} \setminus \Omega$,
\begin{equation*}\begin{split}
&|u(x)-u(y)|-|u_M(x)-u_M(y)| -|v(x)-v(y)|+|v_M(x)-v_M(y)|\\
=&\;|u(x)-u(y)|-|u_M(x)-u_M(y)| -|v(x)-\varphi(y)|+|v(x)-\varphi_M(y)|\\
\le&\;|u(x)-u(y)|-|u_M(x)-u_M(y)| +|\varphi(y)-\varphi_M(y)|.
\end{split}
\end{equation*}
This, \eqref{093240476y}, \eqref{00093240476y2} and~\eqref{093240476y2} give that
\begin{equation}\label{RISND:1}\begin{split}
\frac12
		\iint_{Q(\Omega)} & \left(|u(x)-u(y)|-|v(x)-v(y)|\right) \frac{ dx \, dy} {|x-y|^{n+s}} 
		\\
		 \leq  & \;\frac12 \iint_{Q(\Omega)} \left(|u(x)-u(y)|-|u_M(x)-u_M(y)| -|v(x)-v(y)|+|v_M(x)-v_M(y)|\right) \frac{ dx \, dy} {|x-y|^{n+s}} 
		 \\
		 \leq &\; 
		\frac12  \int_\Omega \int_\Omega \left(|u(x)-u(y)|-|u_M(x)-u_M(y)|\right) \frac{ dx \, dy} {|x-y|^{n+s}} 
		 \\
		 & \; +   \int_\Omega \int_{\Omega_{\Theta d} \setminus \Omega}   \left(|u(x)-u(y)|-|u_M(x)-u_M(y)|\right) \frac{ dx \, dy} {|x-y|^{n+s}} \\
		 &\;+  \int_\Omega \int_{\Omega_{\Theta d} \setminus \Omega}  \frac{|\varphi(y)-\varphi_M(y)| }{ |x-y|^{n+s}}   dx \, dy
 		  + \int_\Omega \int_{\Co \Omega_{\Theta d}}   \frac{|u(x)-u_M(x)|}{|x-y|^{n+s}}\, dx \, dy. 
			\end{split}\end{equation}
Now, by~\eqref{ED764y3r} and Fatou's Lemma, it holds that
\begin{equation}\label{RISND:2}
		 \int_\Omega \int_{\Omega} \frac{  |u(x)-u(y)| } {|x-y|^{n+s}} dx \, dy\leq  
		\liminf_{M\to+\infty} \int_\Omega  \int_\Omega \frac{ |u_M(x)-u_M(y)| } {|x-y|^{n+s}} dx \, dy
	\end{equation}			
We also observe that
	\begin{equation}\label{95i5832465} \varphi_M \longrightarrow \varphi \qquad \mbox{ as } \; M \to+\infty,\end{equation}
and therefore, using again Fatou's Lemma, 
\begin{equation}\label{RISND:3}
		 \int_\Omega \int_{\Omega_{\Theta d} \setminus \Omega} \frac{  |u(x)-u(y)| } {|x-y|^{n+s}} dx \, dy\leq  
		\liminf_{M\to+\infty}\int_\Omega \int_{\Omega_{\Theta d} \setminus \Omega } \frac{ |u_M(x)-u_M(y)| } {|x-y|^{n+s}} dx \, dy.
				\end{equation}
Now we observe that
$$ \frac{|\varphi(y)-\varphi_M(y)| }{ |x-y|^{n+s}}  \le
\frac{|\varphi(y)|+|\varphi_M(y)| }{ |x-y|^{n+s}} \leq \frac{2|\varphi(y)|}{ |x-y|^{n+s}}\in L^1\big(
\Omega\times(\Omega_{\Theta d} \setminus \Omega)\big),
$$
thanks to~\eqref{Thbe2w4} and~\eqref{7ujhed8uj3re8u}.
\\
{F}rom this, \eqref{95i5832465} and the Dominated Convergence Theorem, it follows that
\begin{equation}
	\label{limphi}
		\lim_{M \to+\infty} \int_{\Omega} \int_{\Omega_{\Theta d} \setminus \Omega} \frac{\big|\varphi(y)-\varphi_M(y)\big| }{|x-y|^{n+s}} \, dx \, dy =0.
	\end{equation}
		Furthermore, if~$x\in\Omega $ and~$y\in\Co \Omega_{\Theta d}$, we have that~$y\in\Co B_{\Theta d}(x)$, and accordingly
		\begin{equation*}
			 \int_\Omega \int_{\Co \Omega_{\Theta d}}    \frac{|u(x)-u_M(x)|} {|x-y|^{n+s}}  dx \, dy\leq
			 \int_\Omega \int_{\Co B_{\Theta d}(x)}    \frac{|u(x)-u_M(x)|} {|x-y|^{n+s}}  dx \, dy
			 \le\frac{C}{(\Theta d)^s}\, \|u-u_M\|_{L^1(\Omega)},  
		\end{equation*}
for some~$C>0$. Using this and~\eqref{ED764y3r}, we find that		\[
			 \lim_{M\to+\infty} \int_\Omega \int_{\Co \Omega_{\Theta d}}   \frac{|u(x)-u_M(x)|} {|x-y|^{n+s}}
			 \, dx \, dy =0.  
		\]
We plug this information and \eqref{RISND:2}, \eqref{RISND:3}, \eqref{limphi} into \eqref{RISND:1}
concluding that
$$ \frac12
		\iint_{Q(\Omega)} \left(|u(x)-u(y)|-|v(x)-v(y)|\right) \frac{ dx \, dy} {|x-y|^{n+s}} \le0.$$
This shows that $u$ is an $s$-minimal function and concludes the proof of the theorem.
\end{proof}

\section{Non-uniqueness of $s$-minimal sets, and proof of Theorem~\ref{NONUN}}\label{SJifngwhis}

In this section,
we consider an example which shows that the minimisers of $\tilde  \G$ need not be unique,
thus proving Theorem~\ref{NONUN}:
	
	\begin{proof}[Proof of Theorem~\ref{NONUN}]
Let $E \subset \R^2$ be an~$s$-minimal set
in $B_1$, with $E\setminus B_1= E_0$.
We can suppose that
\begin{equation}\label{UNI6374}
{\mbox{$E$ is the unique $s$-minimal set with exterior data $E_0$,}}
\end{equation}
otherwise we are done.
\\
We observe that
\begin{equation}\label{9uiq:9eifejvwhibc}
{\mbox{$\Co E$ is an~$s$-minimal set
in $B_1$ with respect to~$ \Co E_0$,}}\end{equation}
and we let~$\mathcal R_{\pi}$ be a ninety degree rotation (say, in the anti-clockwise
sense) and we define
\begin{equation}\label{HAS:200}
		E_\pi:= \mathcal R_{\pi} (\Co E).
		\end{equation}
By the rotation invariance of the fractional perimeter and~\eqref{9uiq:9eifejvwhibc}, we have that
\begin{equation}\label{HAS:2}
{\mbox{the set $E_\pi$ is~$s$-minimal in $B_1$ with respect to the datum~$E_0$.}}
\end{equation}
Then, by comparing~\eqref{HAS:2} with~\eqref{UNI6374}, it follows that
	\eqlab{ 
		\label{un}
		E=E_\pi.
		}
Now we claim that
\begin{equation}\label{not:inE}
0\not\in \partial E.
\end{equation}
Indeed, suppose for a contradiction
that $0\in \partial E$.
Then, by~\cite{SavV},
we know that $\partial E$ is $C^\infty$ around $0$,
and therefore 
\begin{equation}\label{IHSjsjd}
{\mbox{we can denote by $\nu$ the exterior unit
normal to $\partial E$ at $0$.}}\end{equation}
This gives that~$-\nu$ is the exterior normal at~$0$ of~$\Co E$ and therefore,
by~\eqref{HAS:200}, we have that~$\mathcal R_{\pi}(-\nu)$
is the exterior normal at~$0$ of~$E_\pi$. This, in light of~\eqref{un},
gives that~$\mathcal R_{\pi}(-\nu)$
is the exterior normal at~$0$ of~$E$.
Hence, by~\eqref{IHSjsjd},
we have that
\begin{equation}\label{HAN:92ei3902ryifghbndcnnb}
		\nu= \mathcal R_{\pi}(-\nu).
	\end{equation}
On the other hand, a ninety degree rotation sends a given vector to an
orthogonal one, hence
$$ -\nu\cdot \mathcal R_{\pi}(-\nu)=0.$$
This and~\eqref{HAN:92ei3902ryifghbndcnnb} yield that~$-1=-\nu\cdot\nu=0$,
which is a contradiction. This proves~\eqref{not:inE}.
\\
In view of~\eqref{not:inE}, we can suppose that
\begin{equation}\label{H78ANdcdjf8243t5}
{\mbox{$0$ lies
inside~$E$,}}\end{equation}
the case in which~$0$ lies
inside~$\Co E$ being analogous.
That is, by~\eqref{H78ANdcdjf8243t5},
we have that~$0$ lies outside~$\Co E$,
and therefore, by~\eqref{HAS:200}, it follows that
\begin{equation}\label{H78ANdcdjf8243t5:2}
{\mbox{$0$
lies outside~$E_\pi$.}}\end{equation}
By comparing~\eqref{H78ANdcdjf8243t5} and~\eqref{H78ANdcdjf8243t5:2},
we see that~$E\ne E_\pi$, and this is in contradiction
with~\eqref{un}.
\end{proof}

\appendix
\section{Additional remarks}\label{APPPES}

The following is an existence theorem, when we assume that the
``global tail'' is summable, according to the notation in~\eqref{coda}.  
\begin{theorem}\label{APP:TH:E}
If $\varphi \colon \Co \Omega \to \R$ is such that 
	\eqlab{ \label{tsss}
		\mathcal T_s(\varphi, \Omega) <+\infty,
	}
	then there exists  an
$s$-minimal function~$u\in \W_\varphi^s(\Omega)$. 
\end{theorem}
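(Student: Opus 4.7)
The plan is to apply the direct method of the calculus of variations directly to $\G$ on $\W^s_\varphi(\Omega)$. Since~\eqref{tsss} implies $\mathcal T_s(u,\Omega)=\mathcal T_s(\varphi,\Omega)<+\infty$ for every $u\in \W^s_\varphi(\Omega)$ (the tail depends only on the values in $\Co\Omega$), Lemma~\ref{TGAet} guarantees that a function in $\W^s_\varphi(\Omega)$ is an $s$-minimal function in the sense of Definition~\ref{defn11} if and only if it minimises $\G$ among its own class; hence it suffices to produce a minimiser of $\G$ in $\W^s_\varphi(\Omega)$. The competitor $v\in \W^s_\varphi(\Omega)$ defined by $v:=\varphi$ in $\Co\Omega$ and $v:=0$ in $\Omega$ gives $\G(v,\Omega)=\mathcal T_s(\varphi,\Omega)$, so $\inf_{\W^s_\varphi(\Omega)}\G$ is finite.

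Fix then a minimising sequence $\{u_k\}\subset \W^s_\varphi(\Omega)$, with $\G(u_k,\Omega)\le C$ for all $k$. Since both contributions to $\G$ (the Gagliardo double integral on $\Omega\times\Omega$ and the nonlocal interaction between $\Omega$ and $\Co\Omega$) are non-negative, this bound immediately gives the seminorm control $[u_k]_{W^{s,1}(\Omega)}\le 2C$. For the $L^1$-control, I would exploit the triangle inequality $|u_k(x)-\varphi(y)|\ge |u_k(x)|-|\varphi(y)|$, which leads to
\[
\G(u_k,\Omega)\ge \int_\Omega |u_k(x)|\left(\int_{\Co\Omega}\frac{dy}{|x-y|^{n+s}}\right)dx-\mathcal T_s(\varphi,\Omega).
\]
The inner integral admits a uniform lower bound $c_0=c_0(n,s,\Omega)>0$ for every $x\in\Omega$: embedding $\Omega\subset B_R$, one has $\Co\Omega\supset \R^n\setminus B_R\supset \R^n\setminus B_{2R}(x)$ whenever $x\in\Omega$, and the integral of $|x-y|^{-n-s}$ over $\R^n\setminus B_{2R}(x)$ is a strictly positive constant depending only on $n,s,R$. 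This yields $\|u_k\|_{L^1(\Omega)}\le c_0^{-1}\big(C+\mathcal T_s(\varphi,\Omega)\big)$, and together with the seminorm bound the sequence is uniformly bounded in $W^{s,1}(\Omega)$.

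By the compact embedding $W^{s,1}(\Omega)\hookrightarrow L^1(\Omega)$, valid for bounded $\Omega$ with Lipschitz boundary, a subsequence of $\{u_k\}$ converges in $L^1(\Omega)$ and a.e.\ in $\Omega$ to some measurable limit $u$; I extend $u$ by $\varphi$ on $\Co\Omega$. Fatou's lemma on the Gagliardo kernel over $\Omega\times\Omega$ gives $[u]_{W^{s,1}(\Omega)}\le \liminf_k [u_k]_{W^{s,1}(\Omega)}<+\infty$, so $u\in \W^s_\varphi(\Omega)$. Applying Fatou separately to each of the two pieces of $\G$ --- which is legitimate because both integrands converge a.e., in view of the a.e.\ convergence on $\Omega$ and the identity $u_k=\varphi$ on $\Co\Omega$ for every $k$ --- one obtains $\G(u,\Omega)\le \liminf_k \G(u_k,\Omega)=\inf\G$. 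Thus $u$ is a minimiser of $\G$, and hence an $s$-minimal function by Lemma~\ref{TGAet}.

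The main obstacle is the $L^1$-coercivity step: the bound $\G(u_k,\Omega)\le C$ yields only seminorm control, and upgrading this to $L^1$ control of $u_k$ in $\Omega$ amounts to extracting a Poincar\'e-type estimate from the nonlocal interaction term of $\G$. This is precisely the point at which the summability assumption~\eqref{tsss} is decisively used, both to absorb the $\varphi$-contributions coming from $\Co\Omega$ and to anchor the minimising sequence inside $\Omega$.
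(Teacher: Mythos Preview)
Your proof is correct and follows essentially the same direct-method approach as the paper's own argument: take a minimising sequence for $\G$, control it in $W^{s,1}(\Omega)$, extract a convergent subsequence by compact embedding, and pass to the limit by Fatou. The only noteworthy difference is in the $L^1$-coercivity step: the paper simply invokes an external lemma (\cite[Lemma~D.1.2]{tesilu}) to pass from a seminorm bound to a full $W^{s,1}(\Omega)$ bound, whereas you give a self-contained Poincar\'e-type argument extracting the $L^1$-bound directly from the nonlocal interaction term of $\G$ and the tail hypothesis~\eqref{tsss}. Your route is slightly more transparent in that it makes explicit how~\eqref{tsss} anchors the minimising sequence in $L^1(\Omega)$; the paper's route is terser but relies on a black-box reference.
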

\begin{proof} We recall that the definition of~$\W_\varphi^s(\Omega)$ was
introduced in~\eqref{WsOM}.
The proof is carried out by using the direct method
of the calculus of variations. According to Lemma~\ref{TGAet}, in the hypothesis~\eqref{tsss}, looking for a minimiser of $\tilde \G$ is equivalent to looking for a minimiser of $\G$. Now,
since~$\G(u,\Omega)\geq 0$, we
can consider a minimising sequence~$u_k\in \W^s_\varphi(\Omega)$
for~$\G$. We
let 
	\[
		m:= \inf_{u \in \W_\varphi^s(\Omega)} \G(u,\Omega) = 
\liminf_{k\to+\infty} \G(u_k,\Omega).
	\]
	Then, for $k$ large enough, we have that
	\[
		[u_k]_{W^{s,1}(\Omega)} \leq m+1
	\]
	and by ~\cite[Lemma D.1.2]{tesilu}, also $\|u_k\|_{W^{s,1}(\Omega)}$ is uniformly bounded. As a consequence,
by compactness, there exists $u\in \W^s_\varphi(\Omega)$ such that
	\[
		u_k \longrightarrow u \quad \mbox{ for } k \to+\infty,  \quad \mbox{ in } L^1(\Omega) \: \; \mbox{ and a.e. in } \Omega.
	\] 
Accordingly, by Fatou's Lemma, we conclude that
	\[
		\G (u,\Omega) \leq \liminf_{k\to+\infty} \G(u_k,\Omega)=m,
	\]
	hence the thesis.
\end{proof}

For the sake of completeness, we give the next result.
\begin{lemma}\label{poi}
Let $\Omega \subset \Rn$ be an open set and let $f\in L^1_{\loc}(\Omega)$.
Let
	\[
		\Sigma:= \{  t\in\R \; | \;\; |\{f=t\}|>0 \}. 
		\]
		Then $\Sigma$ is at most numerable.	
\end{lemma}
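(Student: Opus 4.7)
The plan is to reduce the problem to a finite-measure setting by exhausting $\Omega$ and then to use a standard disjointness argument. First I would pick an exhaustion by compact sets $K_m \subset \Omega$ with $K_m \nearrow \Omega$ and $|K_m|<+\infty$ (for instance, $K_m := \{ x \in \Omega : \dist(x, \partial \Omega) \geq 1/m \text{ and } |x| \leq m \}$; this is possible since $f \in L^1_{\loc}(\Omega)$ makes each $K_m$ of finite Lebesgue measure, being compact).

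The key observation is that for distinct values $t \neq t'$, the level sets $\{f = t\}$ and $\{f = t'\}$ are disjoint. Hence, fixing integers $k, m \geq 1$, the set
\[
\Sigma_{k,m} := \Big\{ t \in \R \;\Big|\; \big|\{f=t\} \cap K_m\big| > \tfrac{1}{k} \Big\}
\]
must be finite: indeed, by disjointness,
\[
\frac{\#\Sigma_{k,m}}{k} \leq \sum_{t \in \Sigma_{k,m}} \big|\{f=t\} \cap K_m\big| \leq |K_m| < +\infty,
\]
so $\#\Sigma_{k,m} \leq k\,|K_m|$.

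Then I would conclude by writing $\Sigma = \bigcup_{k,m \in \N} \Sigma_{k,m}$. This equality holds because, if $t \in \Sigma$, then $|\{f=t\}|>0$, and since $\{f=t\} \cap \Omega = \bigcup_m \big(\{f=t\} \cap K_m\big)$, by monotone convergence there exists $m$ with $|\{f=t\} \cap K_m|>0$, and hence some $k$ with $|\{f=t\} \cap K_m| > 1/k$, giving $t \in \Sigma_{k,m}$. Being a countable union of finite sets, $\Sigma$ is therefore at most countable.

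There is no substantial obstacle here; the only tiny care needed is to ensure the exhaustion $K_m$ covers $\Omega$ up to a null set (which is immediate for the above choice) so that $|\{f=t\}|>0$ forces $|\{f=t\} \cap K_m|>0$ for some $m$. The $L^1_{\loc}$ hypothesis is used only implicitly, in that Lebesgue measurability of $f$ guarantees that the level sets are measurable and the compact sets $K_m$ have finite measure.
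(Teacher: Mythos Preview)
Your proof is correct and follows the same overall skeleton as the paper's---exhaust $\Omega$, stratify the level values by a threshold on the measure of the intersected level set, show each stratum is finite, and take a countable union. The difference lies in how the finiteness of each stratum is obtained. The paper bounds $\#\Sigma_k^{h,m}$ using the $L^1$-norm of $f$ on $\Omega_k$: it further stratifies by $|t|>1/h$ and then estimates $\sum_i |t_i|\,|\{f=t_i\}\cap\Omega_k|\le \|f\|_{L^1(\Omega_k)}$, giving $\#\Sigma_k^{h,m}\le\lceil hm\|f\|_{L^1(\Omega_k)}\rceil$. You instead use only that the level sets are pairwise disjoint subsets of a set of finite measure, obtaining $\#\Sigma_{k,m}\le k|K_m|$ directly. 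Your route is therefore shorter (one fewer index) and, as you correctly observe, shows that the $L^1_{\loc}$ assumption is not really needed---measurability of $f$ suffices. The paper's argument, on the other hand, makes genuine use of the integrability hypothesis stated in the lemma, at the cost of an extra layer of stratification.
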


\begin{proof}
Let $\Omega_k \subset \subset \Omega$ be bounded open sets such that
	\[
		\Omega_k \subset \subset \Omega_{k+1} , \qquad \mbox{ and } \qquad \bigcup_{k\in \N} \Omega_k =\Omega
		.\]
		Let
		\[
			\Sigma_k:= \left\{ t\in \R \setminus \{0\} \; |\;\; \left|\{f=t\} \cap \Omega_k \right|>0  \right\}.
			\]
Then,
			\[ 
			\Sigma \setminus \{0\}= \bigcup_{k\in \N} \Sigma_k
			.\]
			It is enough to check that $\Sigma_k$ is at most numerable to obtain the conclusion of the lemma. 
\\			
			In order to prove this, for every~$h$, $m\in\N$, let
			\[
			\Sigma_k^h:= \left\{ t\in \Sigma_k \; \big|\; |t|>\frac1h  \right\}\qquad {\mbox{and}}\qquad
			\Sigma_k^{h,m} = \left\{ t\in \Sigma_k^h \; \big|\;  \left|\{f=t\} \cap \Omega_k \right|>\frac1m \right\}
			,\]
			so that 
			\[ 
				\Sigma_k^h =\bigcup_{m\in \N} \Sigma_k^{h,m}\qquad {\mbox{and}}\qquad
				\Sigma_k=  \bigcup_{h\in \N} \Sigma_k^{h}.
			\]
			We now prove that $\Sigma_k^{h,m}$ is finite, which is enough to conclude. To this end, we show that
			\begin{equation}\label{Hj:8i2ejf7374:09r}
			\#   \Sigma_k^{h,m} \leq \lceil hm \|f\|_{L^1(\Omega_k)}   \rceil,
			\end{equation}
where we used the standard notation for the ``ceiling function''~$\lceil \cdot \rceil $.
			To prove~\eqref{Hj:8i2ejf7374:09r}, let $t_1, \dots, t_N$ be $N$ distinct elements of $\Sigma_k^{h,m}$. Notice that
			$ \{ f=t_i\} \cap \{ f=t_j\} =\emptyset$ if $i\neq j$, hence
		\begin{eqnarray*}&&
			\frac{N}{hm} < \sum_{i=1}^N |t_i| \; |\{ f=t_i\} \cap \Omega_k| 
			= \sum_{i=1}^N \int_{\{ f=t_i\} \cap \Omega_k} |f(x)| \, dx
			\\&&\qquad= \int_{\Omega_k}  \left(\sum_{i=1}^N \chi_{\{ f=t_i\} }(x)\right) |f(x)| \, dx
		\leq \|f\|_{L^1(\Omega_k)}.
		\end{eqnarray*}
			This concludes the proof of~\eqref{Hj:8i2ejf7374:09r}, as well as of the lemma.			
\end{proof}

\end{document}